\algnewcommand{\algorithmicforeach}{\textbf{for each}}
\newtheorem{remark}{Remark}
\newtheorem{example}{Example}
\DeclareMathOperator*{\argmin}{arg\,min}
\DeclareMathOperator*{\vol}{vol}
\newcommand{\abs}[1]{\left|#1\right|}
\newcommand{\br}{\mathbf{r}}
\newcommand{\state}{\bm{x}}
\newcommand{\statered}{\bm{x}_r}
\newcommand{\rob}{\bm{V}}
\newcommand{\R}{\mathbb{R}}
\newcommand{\cO}{\mathcal{O}}
\newcommand{\cC}{\mathcal{C}}
\definecolor{mylinkcolor}{RGB}{0,0,130}
\crefname{section}{Section}{Sections}
\crefname{subsection}{Subsection}{Subsections}
\title{How to reveal the rank of a matrix?}
\author{Anil Damle\thanks{Department of Computer Science, Cornell University, Ithaca, NY 14853 (\email{damle@cornell.edu}).} \and Silke Glas\thanks{Department of Applied Mathematics, University of Twente, Enschede, The Netherlands (\email{s.m.glas@utwente.nl}).} \and Alex Townsend\thanks{Department of Mathematics, Cornell University, Ithaca, 14853, NY, USA 
  (\email{townsend@cornell.edu}).} \and Annan Yu\thanks{Center for Applied Mathematics, Cornell University, Ithaca, 14853, NY, USA (\email{ay262@cornell.edu}).}}
\begin{document}

\maketitle

\begin{abstract}
We study algorithms called rank-revealers that reveal a matrix's rank structure. Such algorithms form a fundamental component in matrix compression, singular value estimation, and column subset selection problems. While column-pivoted QR has been widely adopted due to its practicality, it is not always a rank-revealer. Conversely, Gaussian elimination (GE) with a pivoting strategy known as global maximum volume pivoting is guaranteed to estimate a matrix's singular values but its exponential complexity limits its interest to theory. We show that the concept of local maximum volume pivoting is a crucial and practical pivoting strategy for rank-revealers based on GE and QR. In particular, we prove that it is both necessary and sufficient; highlighting that all local solutions are nearly as good as the global one. This insight elevates Gu and Eisenstat's rank-revealing QR as an archetypal rank-revealer, and we implement a version that is observed to be at most $2\times$ more computationally expensive than CPQR. We unify the landscape of rank-revealers by considering GE and QR together and prove that the success of any pivoting strategy can be assessed by benchmarking it against a local maximum volume pivot.
\end{abstract}

\begin{keywords}
rank estimation, rank-revealing factorization, local maximum volume, pivoting
\end{keywords}

\begin{AMS}
65F55, 15A18
\end{AMS}

\section{Introduction}
\label{sec:introduction}
Rank-revealers are algorithms that estimate a matrix's singular values, and they play an important role in numerical linear algebra and its applications. They are central to the development of least-squares solvers~\cite{chan1990computing,chan1992applications,golub1965numerical,lawson1995solving}, tensor approximation~\cite{grasedyck2013literature}, and interpretable low-rank approximations~\cite{dong2023simpler,xia2024making,mahoney2009cur}. They are a key part of the computational pipeline for problems in data-fitting~\cite{minden2017fast}, computational quantum chemistry~\cite{damle2015compressed,damle2018disentanglement}, reduced-order modeling~\cite{chaturantabut2010nonlinear,drmac2016new}, discretized integral equations~\cite{bebendorf2008hierarchical,ho2012recursive}, and data science~\cite{damle2019simple} (see~\cref{sec:Impact}). Attend any numerical linear algebra conference these days, and you will likely find a large fraction of the talks about matrix compression, singular value estimation, and column subset selection. Rank-revealing algorithms are a core subroutine for these tasks.

With all this attention on the subject, there is a wide variety of rank-revealing algorithms on the market---both deterministic and randomized---with a wide variety of theoretical guarantees and algorithmic costs. Some practitioners care about the quality of a low-rank approximation~\cite{chan1992applications}, others care about reasonable bases for the four fundamental subspaces of a matrix~\cite{bischof1992signal}, and others worry about computational or communication costs\cite{demmel2015communication,grigori2008communication}. In this zoo of techniques, finding a consistent definition of a rank-revealer isn't easy.  

We take the view that algorithms are rank-revealers, not factorizations, so our definition of rank-revealer is factorization-independent (see~\cref{eq:GoodLeadingSV,eq:GoodTrailingSV}). This gives us a standard definition on which to compare any algorithm that attempts to estimate the singular values of a matrix. We show that the only way for Gaussian elimination (GE) and QR to be rank-revealers and have their partial factorizations satisfy so-called interpolative bounds (see~\cref{def:InterpolativeBoundsQR,def:InterpolativeBoundsGE}) is for them to use a near-local maximum volume pivoting strategy (see~\cref{sec:Necessary}). 

Local maximum volume pivoting is a simple strategy to ensure that GE and QR efficiently compute reasonable singular value estimates and a near-optimal low-rank approximation. Rank-revealers can often replace an expensive singular value decomposition (SVD), leading to much faster algorithms across many disciplines without sacrificing theoretical statements. Moreover, the bases formed for the column and row space of a matrix by rank-revealers are often of great interest. 

\subsection{What is a rank-revealer?}
You might imagine that a rank-revealer is defined as a method that, when applied to any matrix $A$, yields both the rank, $k$, of $A$ and a representation of $A$ in low-rank form, i.e., $A= XY^\top$. Here, $X$ and $Y$ are matrices containing $k$ columns, and $Y^\top$ is the transpose of $Y$. However, such a definition is only useful in exact arithmetic as it is sensitive to perturbations. For example, ${\rm fl}(A)$ can have full rank even if $A$ has low rank, where ${\rm fl}(A)$ is the matrix $A$ rounded on a computer by floating-point. Hence, we are rarely interested in the exact rank of a matrix in most computational applications. 

Instead, we find it more useful to think of a rank-revealer as an algorithm for estimating a matrix's singular values.  Any square or rectangular matrix $A\in\mathbb{R}^{m\times n}$ has associated with it a sequence of nonincreasing singular values $\sigma_1(A)\geq \cdots \geq \sigma_{\min(m,n)}(A)\geq 0$. The Eckart–Young Theorem says that the singular values of $A$ tell us how well $A$ can be approximated by a rank $k$ matrix~\cite{eckart1936approximation}. That is, 
\begin{equation} 
\sigma_{k+1}(A) = \min_{{\rm rank}(B) \leq k} \|A - B\|_2, \qquad 1\leq k\leq \min(m,n) -1,  
\label{eq:EckartYoung} 
\end{equation} 
where $\|\cdot\|_2$ denotes the matrix $2$-norm. Most rank-revealers not only estimate a matrix's singular values but also construct a near-optimal rank $k$ approximation and bases for the column and row spaces of $A$. 

We call an algorithm a rank-revealer if there is a function $\mu_{m,n,k}$, bounded by a polynomial in $m$, $n$, and $k$, such that for any matrix $A\in\mathbb{R}^{m\times n}$, of any size, and any $1\leq k\leq \min(m,n)$, the algorithm computes a rank $\leq k$ matrix $A_k$ satisfying
\begin{equation} 
\frac{1}{\mu_{m,n,k}}\sigma_j(A) \leq \sigma_j(A_k)\leq \mu_{m,n,k} \sigma_j(A), \qquad 1\leq j\leq k,
\label{eq:GoodLeadingSV} 
\end{equation} 
and 
\begin{equation} 
\frac{1}{\mu_{m,n,k}}\sigma_{k+j}(A) \leq \sigma_j(A-A_k) \leq \mu_{m,n,k} \sigma_{k+j}(A),\qquad 1\leq j\leq \min(m,n)-k,
\label{eq:GoodTrailingSV}
\end{equation} 
where $\mu_{m,n,k}\geq 1$. Here, $\mu_{m,n,k}$ is only allowed to depend on $m$, $n$, and $k$, and not on $A$. The closer $\mu_{m,n,k}$ is to the value of $1$, the better the rank-revealer is at estimating the singular values of any matrix.  By~\cref{eq:EckartYoung}, we always have that $\sigma_{k+j}(A) \leq \sigma_j(A-A_k)$ so the lower bound in~\cref{eq:GoodTrailingSV} is trivially satisfied.

The best rank-revealer for minimizing $\mu_{m,n,k}$ is the truncated SVD (the full SVD of $A$ truncated after $k$ rank one terms), which achieves $\mu_{m,n,k} = 1$. However, one does not measure the success of a rank-revealer by only using the value of $\mu_{m,n,k}$. Instead, other factors come into play, such as (a) the computational cost of computing $A_k$, (b) the interpretability of the column and row space of $A_k$, and (c) structure-preserving features of $A_k$. These factors make GE and QR style rank-revealers important as they are more efficient than the SVD, can give more interpretable low-rank approximants, and preserve some matrix structures. Rank-revealers are also used for their ability to compute well-conditioned bases for the column space or row space of a matrix. For example, when $m<n$ and $k=m$ column-pivoted QR~\cite{GB1965} seeks to select $k$ well-conditioned columns of $A$ to span its column space. In this setting, $A_k=A$ and one focuses on~\cref{eq:GoodLeadingSV} together with so-called interpolative bounds (see~\cref{def:InterpolativeBoundsQR});~\cref{sec:QCapplication} discusses such an application.

There have been many proposed definitions of rank-revealers, and it is more common to have separate definitions for each type of factorization. For example, there is a definition of what it means for a partial QR factorization to be a rank-revealer~\cite{hong1992rank}. Likewise, there is a similar definition for the partial LU factorization~\cite{pan2000existence}. Instead, our definition of a rank-revealer is about the algorithm and not factorization, providing a more unifying viewpoint. An algorithm is a rank-revealer if it always provides good singular value estimates in the sense of~\cref{eq:GoodLeadingSV,eq:GoodTrailingSV}. A related, but informally stated, definition of a rank-revealer is given in~\cite{demmel1999computing}. In particular, our definition of a rank-revealer can even allow for randomized numerical linear algebra (rNLA) techniques if one only demands~\cref{eq:GoodLeadingSV,eq:GoodTrailingSV} to hold with high probability. There are many results in the rNLA literature that are close to~\cref{eq:GoodLeadingSV,eq:GoodTrailingSV} with some work focusing more on the conditioning of the computed low-rank approximation~\cite{meier2024fast,martinsson2020randomized} and others on bounding the norm of $A-A_k$~\cite{halko2011finding,martinsson2020randomized}.

In this paper, we focus on deterministic rank-revealers based on GE and QR as they appear as a subroutine in almost all the singular value estimators, including the ones in the rNLA literature~\cite{halko2011finding,martinsson2020randomized}. Rank-revealers in the presence of errors are also interesting~\cite{stewart1993determining}, though we focus on the situation where $A$ is known exactly. 

\subsection{Examples of rank-revealers} 
Rank-revealers fall into one of three categories: zero-sided, one-sided, and two-sided interpolative, where a ``side'' refers to the column or row space of $A_k$. The low-rank approximation $A_k$ is considered one-sided interpolative if $k$ columns of $A$ span the column space of $A_k$ or $k$ rows of $A$ span its row space. A two-sided interpolative low-rank approximation means that the column and row space of $A_k$ are spanned by $k$ columns and rows of $A$, respectively. In many applications, the columns and rows of $A$ have a physical meaning, and interpolative bases immediately lend themselves to a more straightforward interpretation. Three canonical rank-revealers are the truncated SVD (see~\cref{sec:truncatedSVD}), pivoted QR (see~\cref{sec:CPQR}), and GE with pivoting (see~\cref{sec:GE}). GE and QR compute partial LU and QR factorizations, respectively. The pivoting strategy used by GE and QR significantly affects the practical and theoretical properties of the rank-revealer (see~\cref{fig:Overview}). 

\begin{figure} 
\centering 
\begin{overpic}[width=.7\textwidth]{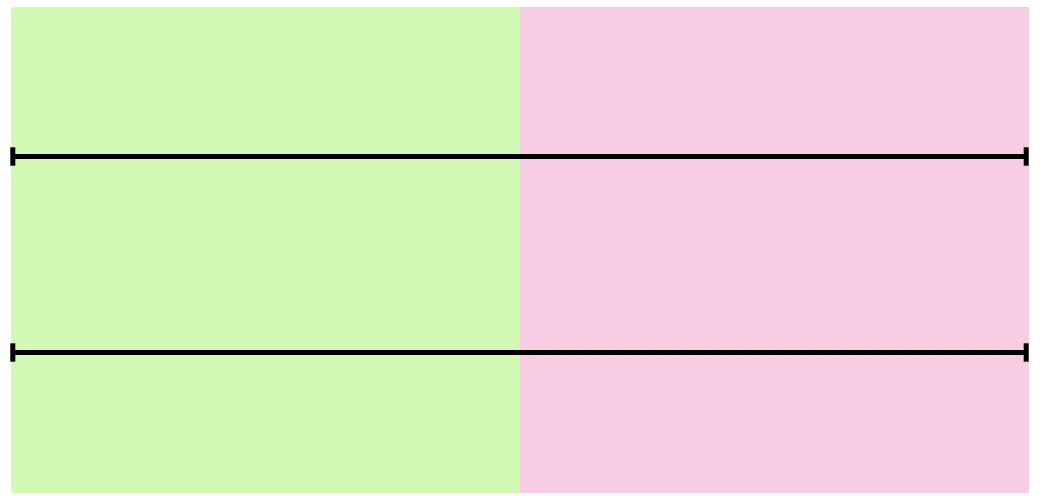}
\put(-20,32) {GE}
\put(-20,13) {QR}
\put(-8,28) {no pivoting}
\put(13,39) {rook}
\put(10,35) {pivoting}
\put(21,30) {complete}
\put(22,27) {pivoting}
\put(38,39) {{\bf local maximum}}
\put(38,35) {{\bf volume pivoting}}
\put(85,39) {global maximum}
\put(85,35) {volume pivoting}
\put(-8,10) {no pivoting}
\put(10,17) {CPQR}
\put(38,20) {{\bf local maximum}}
\put(38,16) {{\bf volume pivoting}}
\put(85,20) {global maximum}
\put(85,16) {volume pivoting}
\put(4,5) {{\footnotesize{Good computational efficiency}}}
\put(7,2) {{\footnotesize{poor theoretical guarantees}}}
\put(52,5) {{\footnotesize{Good theoretical guarantees}}}
\put(51,2) {{\footnotesize{poor computational efficiency}}}
\end{overpic} 
\caption{Different pivoting strategies in GE and QR can be more or less computationally efficient as well as better or worse singular value estimators. In this paper, we show that local maximum volume pivoting (see~\cref{sec:maxvolpivoting}) is a balance between computational efficiency and theoretical guarantees. Near-local maximum volume pivoting is a necessary and sufficient pivoting strategy for GE and QR to be rank-revealers (see~\cref{thm.sufficientLU,thm.sufficientQR,thm.necessaryLU,thm.necessaryQR2}). }
\label{fig:Overview}
\end{figure} 

\subsubsection{Zero-sided: Truncated SVD}\label{sec:truncatedSVD}
Truncating the SVD after $k$ terms is a rank-revealer that ensures that $\mu_{m,n,k}=1$ in~\cref{eq:GoodLeadingSV,eq:GoodTrailingSV}. Here, 
\[
A_k = \begin{bmatrix}| & \!\!\!\cdots\!\!\! & | \\[2pt]  u_1 & \!\!\!\cdots\!\!\! & u_k\\[2pt] | &\!\!\! \cdots\!\!\! & | \\\end{bmatrix} \!\! \begin{bmatrix}\sigma_1(A)\!\!\!\\[-4pt] & \!\!\!\!\!\ddots\!\! \!\!\!\\[-4pt] & & \!\!\!\sigma_k(A)\end{bmatrix}\!\! \begin{bmatrix}\raisebox{1mm}{\rule{.5cm}{0.5pt}} & v_1^\top & \raisebox{1mm}{\rule{.5cm}{0.5pt}} \\[-5pt] \vdots & \vdots & \vdots \\[-3pt] \raisebox{1mm}{\rule{.5cm}{0.5pt}} & v_k^\top & \raisebox{1mm}{\rule{.5cm}{0.5pt}} \end{bmatrix},
\]
where $u_1,\ldots,u_k\in\mathbb{R}^m$ and $v_1,\ldots,v_k\in\mathbb{R}^n$ are the first $k$ singular vectors and $\sigma_1(A)\geq \cdots \geq \sigma_k(A)\geq 0$ are the first $k$ singular values of $A$. Unfortunately, it is often prohibitively expensive to compute $A_k$ for large matrices as the computational cost of computing the full SVD is dominated in floating-point arithmetic by an $\mathcal{O}(mn\min{(m,n)})$ step. Suppose iterative methods are used (as is natural when $k\ll \min{(m,n)}$). The cost per iteration is typically $\cO(mn)$, and if convergence is fast, the overall computational cost can behave like $\cO(kmn);$ in practice; however, such approaches are often substantially slower than partial GE or QR. The truncated SVD is a zero-sided rank-revealer as the rank $\leq k$ approximant does not have a column or row space spanned by $k$ columns or rows of $A$. This can be an undesirable feature. For instance, if $A$ is a sparse matrix, then $A_k$ is usually dense.

\subsubsection{One-sided: Pivoted QR}\label{sec:CPQR}
A pivoted QR factorization works similarly to the standard QR factorization, except the columns are orthogonalized in a (potentially) different order, which is determined by a pivoting strategy. The column-pivoted QR (CPQR)~\cite{GB1965} is the most popular pivoted QR factorization. CPQR works in the same way as the standard QR factorization of a matrix, except for two differences: (i) It orthogonalizes the columns of $A$ in a different order by greedily selecting the next column to orthogonalize as the one with the largest remaining $2$-norm after projecting out those columns already selected and (ii) It stops after $k$ steps. After $k$ steps, CPQR decomposes $A$ into a partial QR factorization as 
\begin{equation}\label{eq.rankkQR}
AP = \begin{bmatrix}Q_1 & Q_2\end{bmatrix}\begin{bmatrix}R_{11} & R_{12} \\ 0 & R_{22} \end{bmatrix}, \qquad R_{12}\in\mathbb{R}^{k\times (n-k)}, \quad R_{22}\in\mathbb{R}^{(\min(m,n)-k)\times (n-k)},
\end{equation}
where $P \in \R^{n \times n}$ is a permutation matrix, $R_{11}\in\mathbb{R}^{k\times k}$ is upper-triangular, and $Q_1\in\mathbb{R}^{m\times k}$ and $Q_2\in\mathbb{R}^{m\times (\min(m,n)-k)}$ such that $Q_1$ has orthonormal columns. Despite using the notation, the factor $Q_2$ in~\cref{eq.rankkQR} is rarely explicitly computed in a partial QR factorization and does not need to have orthonormal columns. Likewise, $R_{22}$ does not need to be upper triangular. The permutation matrix is used here to order the columns so that $k$ steps of CPQR on $AP$ is equivalent to the first $k$ steps of an unpivoted QR. One can think of the first $k$ columns of $AP$, which correspond to $k$ columns from $A$, as the submatrix of $A$ selected by CPQR to use as the column space of $A_k$. A different strategy may select a different set of $k$ columns from $A$, changing the properties of the resulting partial QR factorization. 

For any $1\leq k\leq {\rm rank}(A)$, a pivoted QR factorization constructs a rank $k$ approximant given by $A_k = Q_1\begin{bmatrix}R_{11} & R_{12}\end{bmatrix}P^{-1}$ in only $\mathcal{O}(kmn)$ operations. For almost all matrices, CPQR is observed to provide good estimates of the singular values of $A$ with 
\[
\sigma_j(A_k) \approx \sigma_j(A), \quad 1\leq j\leq k, \qquad \sigma_{j}(A - A_k) \approx \sigma_{j+k}(A),  \quad 1\leq j\leq \min(m,n)-k.
\]
However, CPQR does not guarantee reasonable singular value estimates (see~\cref{sec:CPQRmetric}) and theoretically one only has $\mu_{m,n,k} = 2^k\sqrt{n-k}$ in ~\cref{eq:GoodLeadingSV,eq:GoodTrailingSV} (see~\cite[Thm.~7.2]{gu1996efficient}). The exponential growth of $\mu_{m,n,k}$ in $k$ is unacceptable theoretically. Nevertheless, CPQR usually provides excellent singular value estimates (see~\cref{fig:LUexperimentsmetric} (right)) and is widely used in practice.

Since a pivoted QR constructs a rank $k$ approximation that can be expressed as $A_k = Q_1R_{11}\begin{bmatrix}I_{k,k} & R_{11}^{-1}R_{12}\end{bmatrix}P^{-1}$, the singular values of $R_{11}$ and $A_k$ are certainly close when $\|R_{11}^{-1}R_{12}\|_{max}$ is not too large, where $\|\cdot\|_{\max}$ denotes the maximum absolute entry of a matrix and $I_{k,k}$ is the $k\times k$ identity matrix. It is computationally convenient if the singular values of $R_{11}$ closely match those of $A_k$ as in some applications, we can directly use $R_{11}$ to estimate the leading singular values of $A$. Therefore, a partial QR factorization with a small interpolative bound is often desired. 
\begin{definition} 
We say that the partial QR factorization in~\cref{eq.rankkQR} satisfies an interpolative bound with $\nu\geq 0$ if $\|R_{11}^{-1}R_{12}\|_{\max} \leq \nu$.
\label{def:InterpolativeBoundsQR} 
\end{definition}
Interpolative bounds are important because they ensure that the computed factors in the partial QR factorization can be used to calculate well-conditioned bases for a dominant column space of $A$ and nullspace~\cite{bischof1992signal}.

There are many other possible pivoting strategies (see~\cref{Tab:RRQR}), and one often tries to balance the computational efficiency of a QR pivoting strategy with its observed performance as a rank-revealer. A QR factorization with any pivoting strategy is a one-sided interpolative factorization because the columns of $Q_1$ are orthogonalized vectors coming from a subset of the columns of $A$. Hence, the column space of $A_k$ is always a linear combination of $k$ columns of $A$. 

\subsubsection{Two-sided: GE with pivoting}\label{sec:GE}
GE with a pivoting strategy selects the order of the columns and rows to eliminate based on a pivoting strategy. A common strategy used in the rank-revealing literature is GE with complete pivoting (GECP). This works in the same way as GE without pivoting, except the columns/rows are eliminated in a different order. The next pivot is selected as an entry with the largest absolute magnitude, and the column and row containing the pivot entry are eliminated. In this paper, GE is always terminated after $k$ steps to decompose $A$ into the following partial LU factorization: 
\begin{equation}\label{eq.rankkLU}
P_1AP_2 \!=\! \begin{bmatrix}A_{11} \!\! & \!\!\!A_{12} \\ A_{21}\!\! & \!\!\!A_{22}\end{bmatrix} \!=\! \begin{bmatrix}I_{k,k} \!\! & \!\!0 \\ A_{21}A_{11}^{-1}\!\! & \!\!I_{m-k,n-k}\end{bmatrix} \!\!\begin{bmatrix}A_{11}\!\! & \!\!\!A_{12} \\ 0\!\! & \!\!\!S(A_{11})\end{bmatrix}\!,\quad \!\!\!\!S(A_{11}) = \!A_{22} - A_{21}A_{11}^{-1}\!A_{12},
\end{equation}
where $I_{p,q}$ is the $p\times q$ matrix with ones on the diagonal and $P_1\in \R^{m \times m}$ and $P_2 \in \R^{n \times n}$ are permutation matrices.  Here, $A_{11}\in\mathbb{R}^{k\times k}$, $A_{12}\in\mathbb{R}^{k\times (n-k)}$, $A_{21}\in\mathbb{R}^{(m-k)\times k}$, and $A_{22}\in\mathbb{R}^{(m-k)\times (n-k)}$ are submatrices of $P_1AP_2$. 

If~\cref{eq.rankkLU} is the partial factorization formed by GECP, then the permutation matrices $P_1$ and $P_2$ ensure that the first $k$ steps of GECP do no pivoting. In this way, GECP on $P_1AP_2$ is equivalent to unpivoted GE when terminated after $k$ steps. We call $A_{11}$ the $k\times k$ GECP pivot of $A$. 

In the pseudoskeleton literature, another popular choice of a rank-revealer is GE with global maximum volume pivoting~\cite{goreinov2010find,goreinov1997theory}. This method works similarly to GECP, except (i) It selects a $k\times k$ pivot all at once instead of building it up element by element, (ii) It selects the $k\times k$ pivot that has the largest volume (see~\cref{def:volume}), i.e., the largest possible determinant in absolute value among all $k\times k$ submatrices. For GE with global maximum volume pivoting, the submatrix $A_{11}$ in~\cref{eq.rankkLU} is a $k\times k$ submatrix with the largest volume. 

Unfortunately, it is costly to compute the global maximum volume pivot as it takes $\mathcal{O}(m^kn^k)$ operations. The pivoting strategy takes exponential time because it must essentially compute the volume of every $k\times k$ submatrix of $A$~\cite{CIVRIL20094801}. Despite this, GE with global maximum volume pivoting is interesting from a theoretical perspective as $\mu_{m,n,k}=1+5k\sqrt{mn}$ (see~\Cref{thm.sufficientLU}). While CPQR is practical but not theoretically rigorous, GE with global maximum volume pivoting is at the other extreme (see~\cref{fig:Overview}). 

For any $1\leq k\leq {\rm rank}(A)$, GE with pivoting constructs a rank $k$ approximant $A_k$ given by 
\begin{equation}\label{eq.approxLU}
A_k = P_1^{-1}\begin{bmatrix}A_{11} \!\! & \!\!\!A_{12} \\ A_{21}\!\! & \!\!\!A_{21}A_{11}^{-1}A_{12}\end{bmatrix}P_2^{-1} = P_1^{-1}\begin{bmatrix}I_{k,k} \\A_{21}A_{11}^{-1}\end{bmatrix} A_{11} \begin{bmatrix} I_{k,k} & A_{11}^{-1}A_{12} \end{bmatrix}P_2^{-1},
\end{equation}
where the last statement is a low-rank representation of $A_k$. All partial LU factorizations of the form in~\cref{eq.approxLU} are two-sided interpolative factorizations because the columns and rows of $A_k$ come from linear combinations of $k$ columns and rows of $A$.

It is insightful to write $A_k$ in low-rank form in~\cref{eq.approxLU} as one can see that when the entries of $A_{21}A_{11}^{-1}$ and $A_{11}^{-1}A_{12}$ are not too large, the singular values of $A_k$ and $A_{11}$ are close. Therefore, analogously to QR with pivoting, we often want to compute a partial LU factorization with small interpolative bounds. 
\begin{definition} 
We say that the partial LU factorization in~\cref{eq.rankkLU} satisfies interpolative bounds with $\nu\geq 0$ if $\|A_{21}A_{11}^{-1}\|_{\max} \leq \nu$ and $\|A_{11}^{-1}A_{12}\|_{\max} \leq \nu$.
\label{def:InterpolativeBoundsGE} 
\end{definition}

Like in QR, interpolative bounds are important for GE because they ensure that the computed factors in the partial LU factorization can be used to calculate well-conditioned bases for subspaces associated with $A$. 

\subsection{Our contributions} 
We unify the landscape of rank-revealing algorithms by considering GE and QR together. Our contributions demonstrate that local maximum volume pivoting is the central concept for rank-revealers based on GE and QR. 

\begin{enumerate}[leftmargin=*]
\item {\bf Local maximum volume pivoting is sufficient and practical.}
We show that GE and QR with local maximum volume pivoting constructs a rank $k$ approximation $A_k$ that satisfies~\cref{eq:GoodLeadingSV,eq:GoodTrailingSV} with $\mu_{m,n,k} = 1 + 5k\sqrt{mn}$ and $\mu_{m,n,k} = \sqrt{1+5kn}$, respectively (see~\cref{thm.sufficientLU,thm.sufficientQR}). These values for $\mu_{m,n,k}$ are definitive and cannot be asymptotically improved. Moreover, local maximum volume pivoting also delivers partial LU and QR factorizations satisfying interpolative bounds with $\nu=1$. Interestingly, \emph{any} local maximum volume pivot has this property---irrespective of its distance to the global maximum volume subset. In essence, all local minimizers on the graph of subsets with edges representing single swaps (see~\cref{sec:volumegraph}) are good. Finally, local maximum volume pivots can often be found computationally efficiently (see~\cref{alg:localMaxVol}), making these pivoting strategies practical. There is a GitHub repository accompanying this manuscript~\cite{Github}, which contains MATLAB scripts of practical near-local maximum volume GE and QR rank-revealer. 

\item {\bf Near-local maximum volume pivoting is theoretically necessary.}
We prove that any partial LU or QR factorization of $A$ that provides good estimates for the singular values of $A$ and satisfies interpolative bounds finds a near-local maximum volume submatrix (see~\cref{thm.necessaryLU,thm.necessaryQR2}). Therefore, near-local maximum volume pivoting is crucial for GE and QR to be rank-revealers. 

\item {\bf Local maximum volume as a metric.}
Since near-local maximum volume pivoting is essential for rank-revealers based on GE and QR, we design an efficient metric to assess the quality of any pivoting strategy (see~\cref{sec:assessment}). This can be used in a practical setting to see if a particular pivot is reasonable.  In the GitHub repository~\cite{Github}, we give MATLAB scripts for efficiently computing the metric. 
\end{enumerate}

\subsection{Impact}\label{sec:Impact}

Rank-revealers have a substantial impact across various fields. In no particular order, we give a taste of the vast range of applications here: 

\begin{itemize}[leftmargin=*]
\item[] \textbf{Interpretable low-rank approximation:} They are essential for constructing low-rank models that are interpretable\cite{mahoney2009cur,xia2024making,cheng2005compression,sorensen2016deim}, aiding in the understanding of underlying data structures in bioinformatics, social network analysis, and other fields. 
\item[] \textbf{Fast solvers for integral equation formulations:} They play a key role in fast and accurate computational schemes for working with matrices that arise in integral equations formulations of PDEs within applications such as electromagnetics and fluid dynamics~\cite{ho2012recursive,xia2010fast,martinsson2005fast,martinsson2019fast}.
\item[]\textbf{Fast algorithms for kernel matrices:} They facilitate the efficient manipulation of large kernel matrices and therefore play a key role in machine learning models such as Gaussian processes and kernelized support vector machines~\cite{ambikasaran2015fast,minden2017fast,drineas2005nystrom}. 
\item[] \textbf{Function approximation:} A continuous analogue of GE on function is used for low-rank function approximation~\cite{townsend2013extension} (see~\cref{sec:FunctionApproximation}). Related ideas go by the names adaptive cross approximation~\cite{bebendorf2000approximation}, pseudoskeleton approximation~\cite{goreinov1997theory}, and Geddes--Newton series~\cite{carvajal2005hybrid}. 
\item[] \textbf{Computing localized orbitals in computational quantum chemistry:} They enable the robust and efficient computation of localized orbitals, which play a key role in both the analysis of atomic systems and construction of efficient computational schemes~\cite{damle2015compressed,damle2018disentanglement} (see~\cref{sec:QCapplication}). 
\item[] \textbf{Separable non-negative matrix factorization:} They form the basis of robust and accurate factorization methods in image processing and text mining, where extracting meaningful components is key~\cite{gillis2013fast}.
\item[] \textbf{Spectral clustering:} They can be used as simple and efficient methods for spectral clustering, leading to better data segmentation and analysis outcomes~\cite{damle2019simple}. 
\item[] \textbf{Model order reduction:} They are pivotal for simplifying complex dynamical systems through efficient model order reduction techniques such as QDEIM~\cite{drmac2016new} (see~\cref{sec:ROMapplication}).
\item[] \textbf{Compressing deep learning models:} They are a key part of algorithms to effectively compress deep learning models that do not require ``fine-tuning'' to retain accuracy---enabling the use of such models in resource-constrained environments~\cite{chee2022model}.
\item[] \textbf{Archetype analysis:} They aid in archetype analysis, helping identify prototypical profiles in various datasets---a vital task for exploratory data analysis~\cite{javadi2019nonnegative}. 
\item[] \textbf{Machine learning and volume sampling:} They have intimate connections to machine learning algorithms that rely on volume sampling for more efficient handling of large data sets~\cite{derezinski2020improved,deshpande2010efficient,derezinski2018leveraged}. 
\item[] \textbf{Determinantal point processes (DPP) sampling:} They can help explain the effectiveness of DPP sampling strategies used in recommender systems, spatial statistics, and other applications~\cite{derezinski2021determinantal,anari2022sampling}.
\item[] \textbf{Computational algebraic geometry:} The column subset selection problem appears in the construction of the M\"{o}ller--Stetter matrix using the Macaulay's resultant~\cite{telen2018solving}. 
\end{itemize}

\section{Local maximum volume pivoting}\label{sec:maxvolpivoting}
For GE or QR to be rank-revealers, we will show that it is important that they employ a pivoting strategy that selects a submatrix with a large volume, which is defined as follows:\footnote{Confusingly, the volume for tall-skinny matrices has sometimes been defined as ${\rm det}(A^\top A)$~\cite{mikhalev2018rectangular}.}
\begin{definition}[Volume]
The volume of any matrix $A\in\mathbb{R}^{m\times n}$ is given by the product of its singular values. That is, 
\[
	{\rm vol}(A) = \prod_{j=1}^{\min(m,n)} \sigma_j(A).
\]
\label{def:volume}
\end{definition} 
When $A$ is a square matrix, then ${\rm vol}(A) = \abs{{\rm det}(A)}$. Intuitively, the columns or rows are well-conditioned when a matrix has a large volume. Similarly, if the volume of a matrix is zero, then the columns or rows are linearly dependent. 

GE with global maximum pivoting selects the pivot as the $k\times k$ submatrix with the largest volume. Finding the global maximum volume pivot is typically too computationally expensive to be helpful in most practical applications~\cite{CIVRIL20094801}. Instead, one can preserve most of the rank-revealing properties of GE (see~\cref{sec:LUsufficient}) and QR (see~\cref{sec:sufficientQRsection}) by using a strategy that is called local maximum volume pivoting, which is an idea from Hong and Pan's work in the early 1990s~\cite{hong1992rank,pan2000existence}. 

\begin{definition}[Local maximum volume]\label{dfn:maxvolgeneral}
Let $A \in \R^{m \times n}$. We say that a submatrix $B \in \R^{p \times q}$ of $A$ is a local maximum volume submatrix if ${\rm vol}(B)>0$ and
\[
	{\rm vol}(B) \geq {\rm vol}(\hat{B})
\]
for all $p \times q$ submatrices $\hat{B}$ of $A$ that differ from $B$ in at most one row and at most one column.\footnote{We insist on ${\rm vol}(B)>0$ to exclude singular submatrices.}
\end{definition}

In 1996, Gu and Eisenstat used local maximum volume pivoting for QR~\cite{gu1996efficient} and, in 2003, Miranian and Gu used it as a pivoting strategy for GE~\cite{miranian2003strong}. Local maximum volume submatrices also go by the name  of locally dominant submatrices in the pseudoskeleton literature~\cite{goreinov2010find,mikhalev2018rectangular}. A similar but totally distinct concept is locally optimal submatrices in random matrix theory~\cite{bhamidi2017energy}. The concept of volume also plays a key role in so-called volume sampling methods, which are randomized methods~\cite{deshpande2006matrix,deshpande2010efficient,derezinski2018leveraged}. 

Of course, any global maximum volume submatrix of a nonzero matrix $A$ is also a local maximum volume submatrix, but not vice versa (see~\cref{example.simplemat}). To check that $B$ is a local maximum volume $p\times q$ submatrix of $A$, one must check that ${\rm vol}(B) \geq {\rm vol}(\hat{B})$ for all possible $\hat{B}$. Since $\hat{B}$ is a submatrix of $A$ and is equal to $B$ except in at most one column and row, we can explicitly count how many $\hat{B}$'s there are. There are $p$ possible rows to remove from $B$ and $m-p$ potential rows to replace it, and after this choice, there are $q$ possible columns to remove and $n-q$ replacements. Allowing for the possibility of not swapping out a row or column but ensuring $B\neq\hat{B}$, we have $(p(m-p)+1)(q(n-q)+1)-1$ possible $\hat{B}$'s. So, checking if a submatrix has a local maximum volume is much faster than verifying that it is globally optimal. Moreover, checking if ${\rm vol}(B)\geq {\rm vol}(\hat{B})$ can be done more efficiently (and stably) than computing the product of the singular values of $B$ and $\hat{B}$. Instead, one can use the fact that $\hat{B}$ is at most a rank two update of $B$ (see~\cref{eq.volinterpreiterate,eq:UpdateQR}).

\subsection{The volume submatrix graph}\label{sec:volumegraph}
It is helpful to consider the graph whose nodes are all $p \times q$ submatrices of $A$ and where an edge connects two nodes if they differ by at most one row and at most one column. Each node of this graph has degree $(p(m-p)+1)(q(n-q)+1)-1$ and can be assigned the value of the volume of its associated submatrix. We call this graph the {\em volume submatrix graph}. The nodes and edges of a volume submatrix graph do not depend on $A$, though the node values do.  On this graph, the node with the largest value corresponds to a global maximum volume pivot. Any local maximum submatrix corresponds to a node whose neighbors do not have larger volumes.

\begin{example}\label{example.simplemat}
Consider the following $4\times 4$ block diagonal matrix and its 36-node graph of $2\times 2$ submatrices: 

\begin{minipage}{.31\textwidth} 
\[
A =\begin{bmatrix}
		1 & 3 & 0 & 0 \\
		3 & 1 & 0 & 0  \\
		0 & 0 & \sqrt{3} & 2 \\
		0 & 0 & 2 & -\sqrt{3}
	\end{bmatrix}
\] 
\end{minipage} 
\begin{minipage}{.63\textwidth} 
\centering
\begin{overpic}[width=.6\textwidth,trim=0 0 18 0,clip]{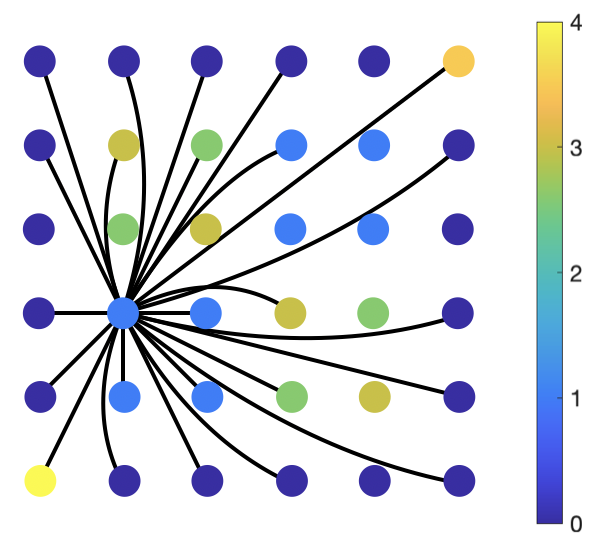}
\put(0,1) {\footnotesize{$(1,\!2)$}}
\put(15,1) {\footnotesize{$(1,\!3)$}}
\put(30,1) {\footnotesize{$(1,\!4)$}}
\put(46,1) {\footnotesize{$(2,\!3)$}}
\put(60,1) {\footnotesize{$(2,\!4)$}}
\put(76,1) {\footnotesize{$(3,\!4)$}}
\put(-12,9) {\footnotesize{$(1,\!2)$}}
\put(-12,24) {\footnotesize{$(1,\!3)$}}
\put(-12,38) {\footnotesize{$(1,\!4)$}}
\put(-12,53) {\footnotesize{$(2,\!3)$}}
\put(-12,68) {\footnotesize{$(2,\!4)$}}
\put(-12,83) {\footnotesize{$(3,\!4)$}}
\put(105,80) {\rotatebox{270}{\text{Volume of submatrix}}}
\put(100,3) {$0$}
\put(100,46.5) {$4$}
\put(100,25) {$2$}
\put(100,68) {$6$}
\put(100,90) {$8$}
\end{overpic}
\end{minipage} 

Here, we have only shown the edges in the graph connecting to one of the nodes, which has $(2^2+1)^2-1 = 24$ edges. If a node is at position labeled $(1,4)\times (1,3)$, then it corresponds to the $2\times 2$ submatrix $A([1,4],[1,3])$. The color of the node indicates the volume of that submatrix. The matrix $A$ has two local maximum volume $2\times 2$ submatrices, which are $A([1,2],[1,2])$ and $A([3,4],[3,4])$.
\end{example} 

\subsection{Computing a local maximum volume submatrix} 
One of the most delightful properties of local maximum submatrices is that there is a simple algorithm to find one of them. Start at any node with a nonzero node value in the volume submatrix graph and look at its neighbors. As soon as you find a neighbor with a larger volume, move to that node and start examining the neighbors of that new node. If, at any point, you reach a node for which all its neighbors have the same or smaller volume, then you have found a local maximum volume submatrix (see~\cref{alg:localMaxVol}). 

\Cref{alg:localMaxVol} forms a path on the volume submatrix graph of nodes with increasing associated volumes. The computational cost of~\cref{alg:localMaxVol} depends on how long the path is. In our numerical experience, the paths taken by~\cref{alg:localMaxVol} are almost always short on random matrices.\footnote{As we will see in~\cref{sec:applications} they are also often short for application driven problems.} In~\cref{fig:pathlengths} (left), we generate a random matrix and investigate the typical path lengths of this algorithm.\footnote{The matrix is generated in MATLAB R2023a using the command \texttt{rng(128); A = randn(11,11)}. We use the seed 128 so that the paths look aesthetically pleasing.}

\begin{figure}
\centering
\begin{minipage}{.49\textwidth}
\begin{overpic}[width=.9\textwidth,trim=0 0 80 0,clip]{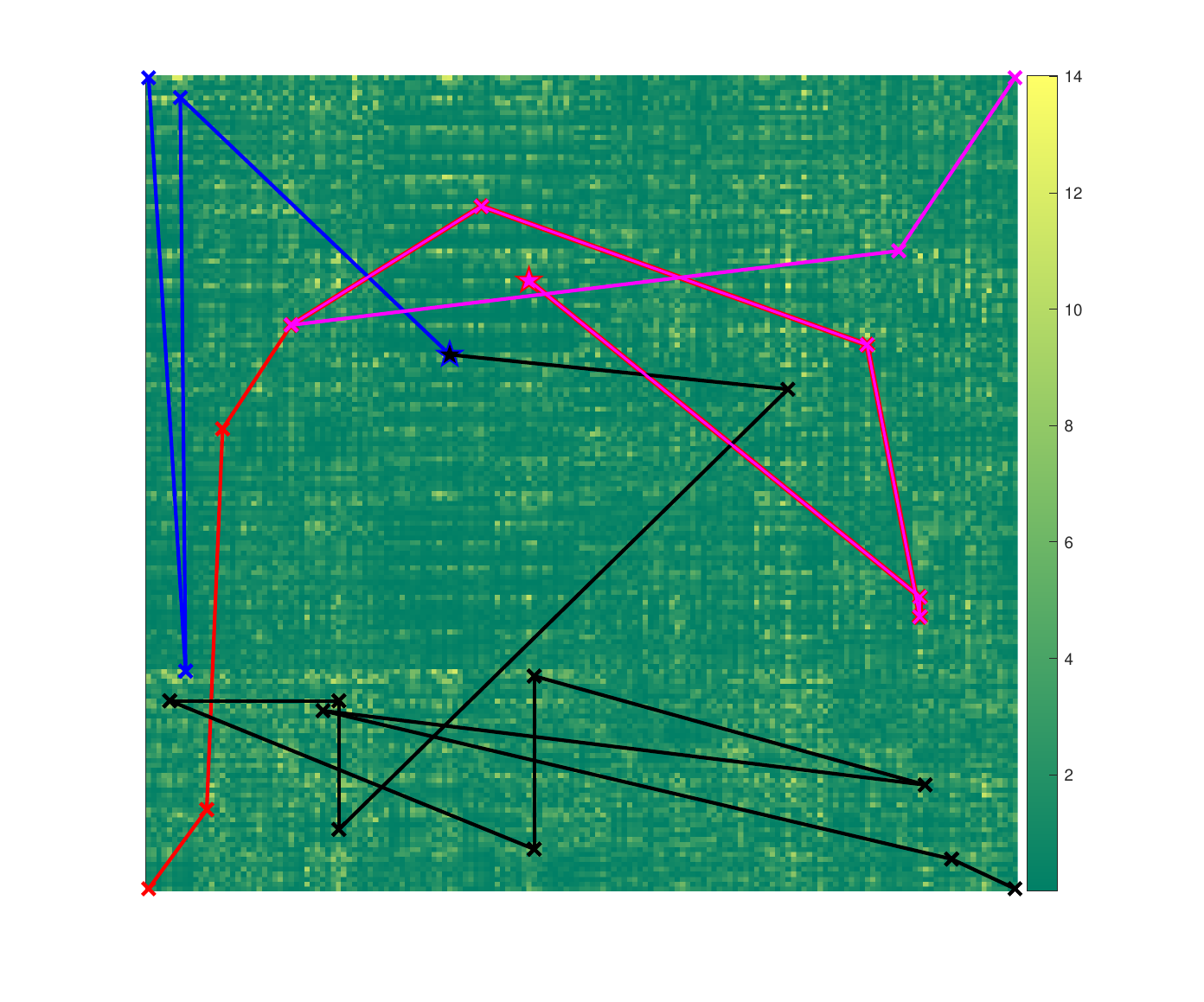}
\put(108,73) {\rotatebox{270}{\text{Volume of submatrix}}}
\put(101,10) {0}
\put(101,20) {2}
\put(101,31) {4}
\put(101,41) {6}
\put(101,53) {8}
\put(101,63) {10}
\put(101,74) {12}
\put(101,84) {14}
\end{overpic} 
\end{minipage} 
\begin{minipage}{.49\textwidth}
\begin{overpic}[width=\textwidth]{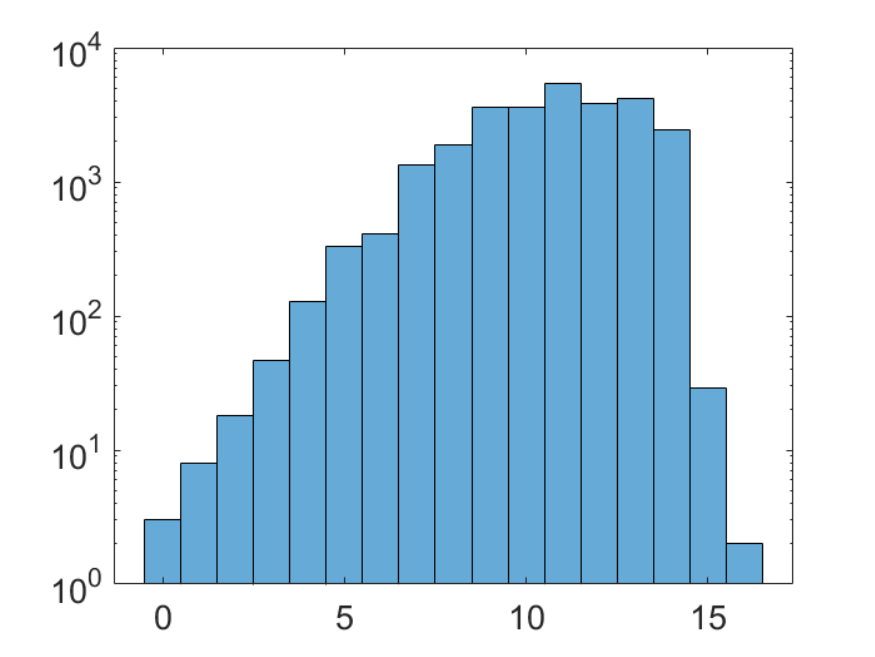}
\put(38,-1.5) {Path length}
\put(1,25) {\rotatebox{90}{Frequency}}
\end{overpic} 
\end{minipage} 
\caption{Left: Four representative paths taken by~\cref{alg:localMaxVol} on the volume submatrix graph for an $11\times 11$ random Gaussian matrix with $3\times 3$ submatrices. There are $27,225$ nodes in the volume submatrix graph. The graph is so large that the individual nodes have merged; instead, we see a color map of the node values. Starting at these four corner submatrices,~\cref{alg:localMaxVol} finds local maximum volume submatrices with path lengths of 3 (blue), 7 (magenta), 8 (red), and 10 (black). Two of the paths coalesce (red and magenta) and then follow each other after that. Right: Histogram of the distribution of path lengths to find a local maximum volume submatrix using~\cref{alg:localMaxVol}, starting at all $27,225$ nodes. Despite there only being three local maximum volume submatrices, the maximum path length to find one is 16.} 
\label{fig:pathlengths}
\end{figure} 

\begin{algorithm}
\caption{Finding a local maximum volume $p\times q$ submatrix of a nonzero matrix.}
\label{alg:localMaxVol}
\begin{algorithmic}[1]
\State Start at any $p\times q$ submatrix $B$ of $A$ with ${\rm vol}(B)>0$ (see~\cref{def:volume}).  
\For{each neighbor $\hat{B}$ of $B$ (see~\cref{sec:volumegraph})}
\State Compute ${\rm vol}(\hat{B})/{\rm vol}(B)$ (see~\cref{eq.volinterpreiterate,eq:UpdateQR} for efficient formulas).
\If{${\rm vol}(\hat{B})/{\rm vol}(B)>1$}
\State Replace $B$ with $\hat{B}$ and go back to step 2. 
\EndIf
\EndFor
\State The algorithm terminates with a local maximum volume submatrix $B$. 
\end{algorithmic}
\end{algorithm}

This paper studies pivoted GE and QR with pivoting strategies that are based on selecting a submatrix of local maximum volume. For GE, the submatrices are of size $k\times k$ while, for QR, the submatrices are of size $m\times k$. Hence, for QR, one seeks $k$ columns with a volume that cannot be increased by replacing a single column. One can always start~\cref{alg:localMaxVol} at a greedily-selected initial submatrix. For $k\times k$ submatrices, one can select a submatrix found by GECP and for $m\times k$ submatrices, the one selected by CPQR. While these initial submatrices do not improve the theoretical performance of~\cref{alg:localMaxVol} as pathlengths can still be very long in principle, they do tend to reduce the pathlengths in practice.  We show that the local maximum volume pivoting strategy ensures GE and QR construct rank-revealing factorizations (see~\cref{thm.sufficientLU,thm.sufficientQR}). Later, we prove that if a pivot strategy does not guarantee at least a near-local maximum volume submatrix, then GE or QR will fail to either be a rank-revealer or will compute a partial factorization that does satisfy interpolative bounds (see~\cref{thm.necessaryLU,thm.necessaryQR2}). 

\section{GE with local maximum volume pivoting}
\label{sec:LUsufficient}
GE with local maximum volume pivoting works similarly to GE with global maximum volume pivoting (see~\cref{sec:GE}), except one uses a $k\times k$ local maximum volume submatrix as a pivot. In particular, this version of GE does not pivot on single entries. Using the same notation as in~\cref{eq.rankkLU} and~\cref{eq.approxLU}, we find that 
\begin{equation} 
P_1(A - A_k)P_2 = \begin{bmatrix} 0 & 0 \\ 0 & S(A_{11})\end{bmatrix}, 
\label{eq:schurcomplement} 
\end{equation}
where $S(A_{11})$ is a Schur complement (see~\cref{eq.rankkLU}) and $A_{11}$ is a local maximum volume $k\times k$ submatrix of $A$ (see~\cref{dfn:maxvolgeneral}). Since $P_1$ and $P_2$ are permutation matrices,~\cref{eq:schurcomplement} tells us that $\sigma_j(A-A_k) = \sigma_j(S(A_{11}))$ for $1\leq j\leq \min(m,n)-k$. There has been some research on rank-revealers based on GE with different pivoting strategies (see~\cref{Tab:RRLU}). In this section, we show that GE with local maximum volume pivoting is a rank-revealer. 

\begin{table}
\caption{Summary of deterministic GE-based approaches for estimating singular values. In~\cite{miranian2003strong}, an algorithm is not explicitly presented though their proof techniques suggest using GE with near-local maximum volume pivoting.}
\setlength{\tabcolsep}{5pt}
\begin{center}
\begin{tabular}{llrll}
	\hline\\[-2ex]
	Method & $\mu_{m,n,k}$ in~\cref{eq:GoodLeadingSV}\&\cref{eq:GoodTrailingSV} & Reference  \\ [0.5ex] 
	\hline\\[-2ex]
	Complete & $\mathcal{O}(4^{k}(k+\rho_k)\sqrt{mn})$ & (see~\cref{lem:GECPvolume}) &  \\
	Alg.~1 & $\mathcal{O}(k\sqrt{mn})$ & Schork+Gondzio, 2020 & \cite{schork2020rank} \\
    	Alg.~3 & $\mathcal{O}(k\sqrt{mn})$ & Pan, 2000 & \cite{pan2000existence} \\
	\hline
\end{tabular}
\label{Tab:RRLU}
\end{center}
\end{table}

A nice feature of GE with local maximum pivoting is that it constructs a partial LU factorization satisfying interpolative bounds with $\nu=1$. To see this, note that Cramer's rule shows that~\cite[Prop.~1]{miranian2003strong}
\begin{equation} 
\left|(A_{11}^{-1}A_{12})_{i,j}\right| = \left|\frac{{\rm det}(\hat{A}_{11})}{{\rm det}(A_{11})}\right| =\left|\frac{{\rm vol}(\hat{A}_{11})}{{\rm vol}(A_{11})}\right|  \leq 1, \qquad 1\leq i\leq k,\quad 1\leq j\leq n-k,
\label{eq:InterpolativeBounds}
\end{equation} 
where $\hat{A}_{11}$ is formed by replacing the $i$th column of $A_{11}$ by the $j$th column of $A_{12}$. Here, the bound of $1$ comes from the fact that $A_{11}$ is a local maximum volume submatrix of $A$. The same argument applies to the entries of $A_{21}A_{11}^{-1}$ to show that they are also all bounded by 1 in absolute value. Therefore, the partial LU factorization constructed by GE with local maximum volume pivoting satisfies interpolative bounds with $\nu = 1$. 

\subsection{GE with local maximum volume pivoting is a rank-revealer}\label{sec:GEisRankRevealer}
We are ready to prove that GE with local maximum volume pivoting is a rank-revealer. The result is related to a statement proved by Miranian and Gu on partial LU factorization~\cite[Thm.~2]{miranian2003strong}, except we consider the low-rank approximation $A_k$ rather than the $k\times k$ pivot matrix $A_{11}$. 

\begin{theorem}\label{thm.sufficientLU}
GE with local maximum volume pivoting is a rank-revealer with $\mu_{m,n,k} = 1+5k\sqrt{mn}$ (see~\cref{eq:GoodLeadingSV} and~\cref{eq:GoodTrailingSV}) and computes a partial LU factorization satisfying interpolative bounds with $\nu\leq 1$ (see~\cref{def:InterpolativeBoundsGE}). 
\end{theorem}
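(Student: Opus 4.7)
The interpolative-bound claim $\nu\le 1$ is essentially already proved in the display~\cref{eq:InterpolativeBounds}: each entry of $A_{11}^{-1}A_{12}$ and, by an identical argument, each entry of $A_{21}A_{11}^{-1}$ equals a ratio $\vol(\hat A_{11})/\vol(A_{11})$ where $\hat A_{11}$ differs from $A_{11}$ in one column or one row, so~\cref{dfn:maxvolgeneral} bounds each ratio by $1$. For the rank-revealing estimates, the plan is to work throughout with the block factorization $P_1 A P_2 = X\cdot\mathrm{diag}(A_{11},S)\cdot Y$, where $X$ and $Y$ are the unit-diagonal block-triangular factors built from $F=A_{21}A_{11}^{-1}$ and $G=A_{11}^{-1}A_{12}$, and $S=S(A_{11})$. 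The interpolative bounds immediately give $\|F\|_F^2\le k(m-k)$ and $\|G\|_F^2\le k(n-k)$, and these control every norm that will appear below; in particular $\|L\|_2\|R\|_2\le\sqrt{(1+k(m-k))(1+k(n-k))}$ for the tall/wide factors $L=[I_k;F]$ and $R=[I_k,G]$ of the rank-$k$ approximant $A_k=LA_{11}R$ from~\cref{eq.approxLU}.

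The technical heart is the Schur-complement entrywise bound $\|S(A_{11})\|_{\max}\le(k+1)\,\sigma_{k+1}(A)$. Fix an out-of-pivot pair $(i,j)$ and let $B$ be the $(k+1)\times(k+1)$ submatrix of $A$ obtained by adjoining row $i$ and column $j$ to $A_{11}$; the Schur determinant identity gives $|S(A_{11})_{ij}|=\vol(B)/\vol(A_{11})$. Every one of the $(k+1)^2$ many $k\times k$ minors of $B$ either equals $A_{11}$ or differs from it in at most one row and at most one column, so by~\cref{dfn:maxvolgeneral} each has volume at most $\vol(A_{11})$. The Cauchy--Binet identity then gives $e_k(\sigma_1^2(B),\ldots,\sigma_{k+1}^2(B))\le(k+1)^2\,\vol(A_{11})^2$, which forces $\prod_{\ell=1}^{k}\sigma_\ell(B)\le(k+1)\vol(A_{11})$; combining this with Cauchy interlacing $\sigma_{k+1}(B)\le\sigma_{k+1}(A)$ delivers the claim. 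The same template, applied instead to $(k+j)\times(k+j)$ submatrices of $A$ containing the pivot indices, bootstraps to a bound of the form $\sigma_j(S(A_{11}))\le C_{m,n,k}\,\sigma_{k+j}(A)$ and thereby settles the upper half of~\cref{eq:GoodTrailingSV}; the lower half is immediate from Eckart--Young.

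For~\cref{eq:GoodLeadingSV}, the upper direction $\sigma_j(A_k)\le\mu\,\sigma_j(A)$ is the product of three facts: $\sigma_j(A_k)\le\|L\|_2\|R\|_2\,\sigma_j(A_{11})$, the norm estimate from paragraph~1, and Cauchy interlacing $\sigma_j(A_{11})\le\sigma_j(A)$ for $j\le k$ (since $A_{11}$ is a $k\times k$ submatrix of $A$). The reverse direction $\sigma_j(A)\le\mu\,\sigma_j(A_k)$ follows from Weyl's inequality $\sigma_j(A)\le\sigma_j(A_k)+\|S\|_2$, the bound $\|S\|_2\le(k+1)\sqrt{mn}\,\sigma_{k+1}(A)$ from paragraph~2, and a short dichotomy on whether $\sigma_j(A)$ dominates $\sigma_{k+1}(A)$ by a sufficiently large multiple (in which case the inequality rearranges directly) or is comparable to it (in which case $\sigma_j(A_{11})$, $\sigma_j(A_k)$, and $\sigma_j(A)$ are all comparable to $\sigma_{k+1}(A)$ by paragraph~1). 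Collecting constants across both estimates and both cases then yields $\mu_{m,n,k}=1+5k\sqrt{mn}$.

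I expect the main obstacle to be the extension of the Schur-complement bound from $\|S\|_{\max}$ to $\sigma_j(S)$ for $j\ge 2$: the clean $(k+1)\times(k+1)$ Cauchy--Binet argument does not apply verbatim, and one must pass to $(k+j)\times(k+j)$ minors while arguing that the local maximum volume property of the single $k\times k$ block $A_{11}$ still yields a uniform bound on every sub-volume that appears in the enlarged Cauchy--Binet expansion. A secondary difficulty is tight constant-tracking, since the stated $\mu_{m,n,k}=1+5k\sqrt{mn}$ leaves essentially no slack above the dominant terms produced by $\|L\|_2\|R\|_2$ and $\|S\|_2$, and naive AM-GM bookkeeping would push the constant into the $k^2\sqrt{mn}$ regime.
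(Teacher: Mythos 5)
The interpolative-bound claim and the upper half of~\cref{eq:GoodLeadingSV} are handled the same way as in the paper, and your Cauchy--Binet computation for $\|S(A_{11})\|_{\max}$ is a nice self-contained proof of the $j=1$ case of~\cref{eq:GoodTrailingSV}. But the paper does not reprove these singular-value bounds from scratch---it imports both halves of what you are trying to establish directly from \cite[Thm.~2]{miranian2003strong}, which gives $\sigma_j(A)\le\mu\,\sigma_j(A_{11})$ for all $1\le j\le k$ and $\sigma_j(S(A_{11}))\le\mu\,\sigma_{k+j}(A)$ for all $1\le j\le\min(m,n)-k$, and then derives the theorem in a few lines. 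You are in effect reproving Miranian--Gu, and there the argument has two gaps.

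The first you already flagged: the Cauchy--Binet argument handles only $j=1$, and passing to $(k+j)\times(k+j)$ minors does not give a uniform bound from the local maximum volume property of the single $k\times k$ block $A_{11}$, since the interior $k\times k$ minors of a $(k+j)\times(k+j)$ extension can differ from $A_{11}$ in up to $j$ rows and columns. The second, more serious gap is in the lower half of~\cref{eq:GoodLeadingSV}. Your dichotomy is correct in the regime $\sigma_j(A)\gg\|S\|_2$, but in the complementary regime you invoke a ``comparability'' of $\sigma_j(A_{11})$ with $\sigma_{k+1}(A)$ that paragraph~1 simply does not provide: the block factorization, the Frobenius-norm bounds on $F$ and $G$, and $\|L\|_2\|R\|_2$ say nothing about how small $\sigma_k(A_{11})$ can be relative to $\sigma_{k+1}(A)$. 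A lower bound on $\sigma_k(A_{11})$ in terms of $\sigma_k(A)$ (equivalently $\sigma_{k+1}(A)$ in the plateau case) is \emph{exactly} the content of the Miranian--Gu inequality $\sigma_j(A)\le\mu\,\sigma_j(A_{11})$; it is the nontrivial consequence of local maximum volume that you have not established, and the dichotomy does not route around it---it just hides the need for it in the second case. The paper avoids all of this by applying \cite[Thm.~2]{miranian2003strong} and then observing that $A_{11}$ is a submatrix of $A_k$, so $\sigma_j(A_{11})\le\sigma_j(A_k)$, which turns the Miranian--Gu bound directly into the lower half of~\cref{eq:GoodLeadingSV} with no case analysis. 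If you want a self-contained argument, the missing lemma you need to prove is a lower bound on $\sigma_j(A_{11})$ of the form $\sigma_j(A_{11})\ge\sigma_j(A)/\bigl(1+\mathcal{O}(k\sqrt{mn})\bigr)$, which is a genuinely separate consequence of the local maximum volume property.
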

\begin{proof}
If GE with local maximum volume pivoting on $A$ is possible, then we know that ${\rm rank}(A)\geq k$; otherwise, all $k\times k$ submatrices of $A$ are singular, and there are no local maximum volume submatrices. From~\cite[Thm.~2]{miranian2003strong}, we have 
\begin{equation} 
\begin{aligned}
\sigma_j(A_{11}) \leq \sigma_j(A) &\leq \left(2k\sqrt{(m\!-\!k)(n\!-\!k)} \!+\! \sqrt{(1\!+\!k(n\!-\!k))(1\!+\!k(m\!-\!k))}\right)\sigma_j(A_{11}) \\
&\leq \left(1+5k\sqrt{mn}\right)\sigma_j(A_{11}), \quad 1\leq j\leq k,
\end{aligned}
\label{eq:FirstInequality}
\end{equation} 
where the lower bound comes from the fact that $A_{11}$ is a submatrix of $A$~\cite{thompson1972principal}.
Moreover, we have
 \begin{equation} 
 \begin{aligned}
\sigma_{k+j}(A) \leq\! \sigma_j(S(A_{11})) \!& \leq\! \left(2k\sqrt{(m\!-\!k)(n\!-\!k)} \!+\!\! \sqrt{(1\!+\!k(n\!-\!k))(1\!+\!k(m\!-\!k))}\right)\! \sigma_{k+j}(A)\\
&\leq \left(1+5k\sqrt{mn}\right)\sigma_{k+j}(A), \quad 1\leq j\leq \min(m,n)\!-\!k,
\end{aligned}
\label{eq:eq2} 
\end{equation} 
where the upper bound comes from~\cite[Thm.~2]{miranian2003strong} and the lower bound is a consequence of $\sigma_j(S(A_{11})) = \sigma_j(A-A_k)$ and the Eckart--Young Theorem~\cite{eckart1936approximation}.  Since $\sigma_j(S(A_{11})) = \sigma_j(A-A_k)$, the two inequalities in~\cref{eq:eq2} prove that GE with local maximum volume pivoting delivers the inequalities in~\cref{eq:GoodTrailingSV}. 

To complete the proof we must also show that the two inequalities in~\cref{eq:GoodLeadingSV} also hold. For the upper bound in~\cref{eq:GoodLeadingSV}, the low-rank form of $A_k$ in~\cref{eq.approxLU} shows that
\begin{equation} 
\sigma_j(A_k) \leq \left(1+\|A_{21} A_{11}^{-1}\|_2\right) \left(1+\|A_{11}^{-1} A_{12}\|_2\right)\sigma_j(A_{11}),\quad 1\leq j\leq k.
\label{eq:betweenA11Ak}
\end{equation} 
Moreover, we have $\|A_{11}^{-1} A_{12}\|_2\leq\sqrt{k(n-k)}\|A_{11}^{-1} A_{12}\|_{\max}\leq  \sqrt{k(n-k)}$, where the last inequality comes from~\cref{eq:InterpolativeBounds} and $\|\cdot\|_{\max}$ is the maximum absolute entry norm. Similarly, we have $\|A_{11}^{-1} A_{12}\|_2\leq  \sqrt{k(m-k)}$. Thus, for $1\leq j\leq k$, we find that $\sigma_j(A_k) \leq (1+\sqrt{k(n-k)})(1+\sqrt{k(m-k)})\sigma_j(A)$ as $\sigma_{j}(A_{11})\leq \sigma_j(A)$ from~\cref{eq:FirstInequality}. The upper bound follows as $(1+\sqrt{k(n-k)})(1+\sqrt{k(m-k)})\leq 1+ 3k\sqrt{mn}$. The theorem statement is proved as the upper bound in~\cref{eq:GoodLeadingSV} appears in~\cref{eq:FirstInequality} after noting that $A_{11}$ is a submatrix of $A_k$ so $\sigma_j(A_{11})\leq \sigma_j(A_k)$ for $1\leq j\leq k$. 
\end{proof}

\Cref{thm.sufficientLU} shows that GE with local maximum volume pivoting can estimate the singular values of $A$ to within a factor of $1+5k\sqrt{mn}$. One can use its constructed rank $\leq k$ approximation $A_k$ to estimate the first $k$ singular values of $A$. Since $A_k$ is of rank $k$, one can compute these singular values in $\mathcal{O}(k^2(m+n))$ operations~\cite[Sec.~1.1.4]{bebendorf2008hierarchical}. Moreover, we know that $A_k$ is a near-optimal low-rank approximation to $A$ as $\|A-A_k\|_2\leq (1+5k\sqrt{mn})\sigma_{k+1}(A)$.  Since the singular values of $A-A_k$ are also good estimates for the trailing singular values of $A$, if one wants to compute more than the first $k$ singular values of $A$, then there is no need to start GE afresh with a larger value of $k$. Instead, one can run GE with local maximum volume pivoting on the residual $A-A_k$. 

One might wonder if a significantly stronger statement than~\cref{thm.sufficientLU} is possible for GE with local maximum volume pivoting. The following example illustrates that this is not the case, and one could only improve the value of $\mu_{m,n,k}$ by at most constant factors. 

\begin{example} 
For any $m,n\geq 2$ and $2\leq k\leq \min(m,n)$ consider the following $m\times n$ matrix whose entries depend on $k$: 
\[
A =\left[\begin{array}{cccc|ccc}
        k+1 & -1 & \cdots & -1 & -1 & \cdots & -1 \\
        -1 & k+1 & \cdots & -1 & -1 & \cdots & -1 \\
        \vdots & \vdots & \ddots & \vdots & \vdots & \ddots & \vdots \\
       -1 & -1 & \cdots & k+1 & -1 & \cdots & -1 \\
        \hline
        -1 & -1 & \cdots & -1 & k+1 & \cdots & k+1 \\
        \vdots & \vdots &\ddots & \vdots & \vdots & \ddots & \vdots \\
        -1  & -1 &  \cdots & -1 & k+1 & \cdots & k+1
    \end{array}\right],
\]
where the first block is $k\times k$. It can be shown that the principal $k\times k$ submatrix is a local maximum volume submatrix, but (see~\cref{sec:AppendixSharpness})
\[
\sigma_1(A-A_k) \geq \frac{k+2}{4}\sqrt{(m-k)(n-k)}\,\sigma_{k+1}(A).
\]
Therefore, we know that GE with local maximum volume pivoting cannot be a rank-revealer with $\mu_{m,n,k}<(k+2)\sqrt{(m-k)(n-k)}/4 = \mathcal{O}(k\sqrt{mn})$; otherwise, this matrix would violate the upper bound in~\cref{eq:GoodTrailingSV} with $j=1$. 
\label{ex:sharpness}
\end{example} 

\subsection{Computing a local maximum volume pivot in GE}\label{sec:LUalgo}
When searching for a local maximum volume pivot in GE it is important to compute the quantities of the form ${\rm vol}(\hat{B})/{\rm vol}(B)$ (see~\cref{alg:localMaxVol}), where $B$ and $\hat{B}$ are $k\times k$ submatrices of $A$ and $\hat{B}$ differs from $B$ in at most one column and at most one row. Fortunately, when ${\rm vol}(B)>0$, it turns out that this ratio can be computed efficiently. Suppose that $B = A_{11}$ is the $k\times k$ principal submatrix of $A$ after permuting the columns and rows, where
\[
	P_1AP_2 = \begin{bmatrix}
	A_{11} & A_{12} \\
	A_{21} & A_{22}
	\end{bmatrix},\qquad A_{11}\in\mathbb{R}^{k\times k}.
\]
Then, one finds that~\cite[Prop.~1]{miranian2003strong}
\begin{equation}\label{eq.volinterpreiterate}
	\frac{\vol(\hat{A}_{11})}{\vol(A_{11})} = \abs{(A_{11}^{-1}A_{12})_{st} (A_{21} A_{11}^{-1})_{ji} + (A_{11}^{-1})_{si}(S(A_{11}))_{jt}},
\end{equation}
where $S(A_{11})$ is the Schur complement in~\cref{eq.rankkLU}, and $\hat{A}_{11}$ is obtained from $A_{11} \in \R^{k \times k}$ by interchanging rows $i$ and $k+j$ and columns $s$ and $k+t$ of $P_1AP_2$. This ratio can be computed in $\mathcal{O}(k^3+mn)$ operations. For sufficiently small matrices $A_{11}^{-1}A_{12}$, $A_{21}A_{11}^{-1}$, $A_{11}^{-1}$, and $S(A_{11})$ can be computed in $\mathcal{O}(kmn)$ operations and then stored in memory. 

Despite good evidence for the path lengths from~\cref{alg:localMaxVol} being modest in practice. The worst-case behavior from~\cref{alg:localMaxVol} is still searching through all combinatorially many nodes and only finding a local maximum volume submatrix at the last possible moment. This issue motivated Miranian and Gu to consider GE with near-local maximum volume pivoting~\cite{miranian2003strong}, which can be computed more efficiently (see~\cref{sec:NearLocalMaxVol}). 

\section{QR with local maximum volume pivoting}\label{sec:sufficientQRsection}
QR with local volume pivoting is similar to CPQR, except the $k$ columns that are orthogonalized are selected as those that form a local maximum volume submatrix of $A$. Using the same notation as in~\cref{eq.rankkQR}, we find that 
\[
(A-A_k)P = \begin{bmatrix} 0 & Q_2 R_{22}\end{bmatrix},
\]
where the permutation matrix $P$ is selected so that the first $k$ columns of $AP$ are a local maximum volume submatrix (see~\cref{dfn:maxvolgeneral}). There have been decades of research studying rank-revealers based on QR and many different algorithms have been proposed (see~\cref{Tab:RRQR}). In this section, we show that QR with local maximum volume pivoting is a rank-revealer.

\begin{table}
\setlength{\tabcolsep}{5pt}
\begin{center}
\caption{Summary of deterministic QR-based approaches for estimating singular values (table adapted from~\cite{boutsidis2009improved}). In addition to algorithms in the table, there are other algorithms that consider only parts of~\cref{eq:GoodLeadingSV} and~\cref{eq:GoodTrailingSV}. These include the QR factorization algorithm in~\cite{golub1976rank}, High RRQR in~\cite{Foster1986,chan1987rank}, Low RRQR in~\cite{CH1994}, rank-revealing QR in~\cite{hong1992rank}, Hybrid I-III algorithms in~\cite{chandrasekaran1994rank}, DGEQPX and DGEQPY in~\cite{bischof1998computing}, Alg.~2-3 in~\cite{pan1999bounds}, and volume sampling-based algorithms in~\cite{deshpande2010efficient,cortinovis2020low}. Notably, as suggested by~\cite{gu1996efficient}, one can prove a $\mu_{m,n,k}$ that depends exponentially on $k$ for most of these algorithms.}
\begin{tabular}{llrll}
	\hline\\[-2ex]
	Method & $\mu_{m,n,k}$ in~\cref{eq:GoodLeadingSV}\&\cref{eq:GoodTrailingSV} & Reference &  \\ [0.5ex] 
	\hline\\[-2ex]
	CPQR & $\mathcal{O}(2^k\sqrt{n})$ &  Busin.+Golub, 1965 & \cite{GB1965} \\
	Alg.~4 & $\mathcal{O}(\sqrt{kn})$ & Gu+ Eisenstat, 1996 & \cite{gu1996efficient} \\ 	
	Alg.~1 & $\mathcal{O}(\sqrt{kn})$ &  Pan+Tang, 1999 & \cite{pan1999bounds} \\ 
	Alg.~2 & $\mathcal{O}(\sqrt{kn})$ &  Pan, 2000 & \cite{pan2000existence} \\ 
	\hline
\end{tabular}
\label{Tab:RRQR}
\end{center}
\end{table}

Despite appearances, GE and QR have a well-known connection via the Cholesky decomposition. In particular, from~\cref{eq.rankkQR}, we have the partial Cholesky factorization
\begin{equation} 
	P^\top A^\top A P = 
	\begin{bmatrix}
		R_{11}^\top & 0 \\
		R_{12}^\top & R_{22}^\top
	\end{bmatrix}
	\begin{bmatrix}
		R_{11} & R_{12} \\
		0 & R_{22}
	\end{bmatrix},
\label{eq:PartialCholesky}
\end{equation} 
which comes from using the fact that $\begin{bmatrix}Q_1 & Q_2 \end{bmatrix}$ has orthonormal columns. Therefore, QR with local maximum volume pivoting on $A$ is closely related to GE with local maximum volume pivoting on $A^\top A$. The only minor difference is that GE can permute the columns and rows separately, while QR can only permute the columns of $A$. This does not matter in the end because if $A(:,\mathcal{J})$ is a $m\times k$ local maximum volume submatrix of $A$, then $A(:,\mathcal{J})^\top A(:,\mathcal{J})$ is a $k\times k$ local maximum volume submatrix of $A^\top A$. To see this, let $\hat{\mathcal{J}}$ and $\tilde{\mathcal{J}}$ be two sets of indices of size $k$ that differ from $\mathcal{J}$ in at most one entry. Then, we have 
\[
\begin{aligned} 
{\rm vol}\left(A(:,\hat{\mathcal{J}})^\top A(:,\tilde{\mathcal{J}})\right) & = \prod_{j=1}^k \sigma_j\left(A(:,\hat{\mathcal{J}})^\top A(:,\tilde{\mathcal{J}})\right)\\
& \leq \prod_{j=1}^k \sigma_j\left(A(:,\hat{\mathcal{J}})\right) \sigma_j\left( A(:,\tilde{\mathcal{J}})\right)\\
& \leq \prod_{j=1}^k \sigma_j^2\left(A(:,\mathcal{J})\right) = {\rm vol}\left(A(:,\mathcal{J})^\top A(:,\mathcal{J})\right), \\
\end{aligned} 
\]
where the last inequality follows from the assumption that $A(:,\mathcal{J})$ is a $m\times k$ local maximum volume submatrix of $A$.  A local maximum volume submatrix in QR also ensures that the resulting partial QR factorization satisfies an interpolative bound with $\nu = 1$. 

\subsection{QR with local maximum volume pivoting is a rank-revealer}\label{sec:QRrankrevealer}
We now show that QR with local maximum volume pivoting is a rank-revealer. The result is related to a statement proved by Gu and Eisenstat on QR with local maximum volume pivoting~\cite[Thm.~3.2]{gu1996efficient}. We prove it from~\cref{thm.sufficientLU}. 

\begin{theorem}\label{thm.sufficientQR}
QR with local maximum volume pivoting is a rank-revealer with $\mu_{m,n,k} = \sqrt{1+5kn}$ (see~\cref{eq:GoodLeadingSV} and~\cref{eq:GoodTrailingSV}) and computes a partial QR factorization satisfying an interpolative bound with $\nu\leq 1$ (see~\cref{def:InterpolativeBoundsQR}).
\end{theorem}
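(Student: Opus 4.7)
The plan is to reduce the QR statement to the already-proved Theorem~\ref{thm.sufficientLU} for GE by exploiting the Cholesky relation in~\cref{eq:PartialCholesky}. Specifically, if $\mathcal{J}$ is the column index set selected by QR with local maximum volume pivoting, so that $A(:,\mathcal{J})$ is an $m\times k$ local maximum volume submatrix of $A$, then by the short computation immediately following~\cref{eq:PartialCholesky} we already know that $A(:,\mathcal{J})^\top A(:,\mathcal{J}) = (A^\top A)(\mathcal{J},\mathcal{J})$ is a $k\times k$ local maximum volume submatrix of the $n\times n$ matrix $A^\top A$. Therefore, running GE with local maximum volume pivoting on $A^\top A$ can choose exactly this pivot, and Theorem~\ref{thm.sufficientLU} applies with $m=n$, giving $\mu_{n,n,k}=1+5kn$ for the singular-value estimation of $A^\top A$.

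The next step is to translate these bounds back to $A$. Write $A_k = Q_1[R_{11},R_{12}]P^{-1}$ from~\cref{eq.rankkQR} and let $(A^\top A)_k$ denote the rank-$k$ GE approximant of $A^\top A$ built from the pivot $(A^\top A)(\mathcal{J},\mathcal{J})$ as in~\cref{eq.approxLU}. Substituting the Cholesky identities $(A^\top A)_{11}=R_{11}^\top R_{11}$, $(A^\top A)_{12}=R_{11}^\top R_{12}$, $(A^\top A)_{21}=R_{12}^\top R_{11}$ into~\cref{eq.approxLU}, a short block-matrix calculation shows the key identity
\[
 (A^\top A)_k = A_k^\top A_k,\qquad A^\top A - (A^\top A)_k = P\,\mathrm{diag}(0,R_{22}^\top R_{22})\,P^{-1},
\]
so that $\sigma_j((A^\top A)_k)=\sigma_j(A_k)^2$ and (using that $\sigma_j(A-A_k)=\sigma_j(R_{22})$ from the full QR) $\sigma_j(A^\top A-(A^\top A)_k)=\sigma_j(A-A_k)^2$. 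Combined with $\sigma_j(A^\top A)=\sigma_j(A)^2$, this lets me rewrite the inequalities in~\cref{eq:GoodLeadingSV,eq:GoodTrailingSV} supplied by Theorem~\ref{thm.sufficientLU} as
\[
 \tfrac{1}{1+5kn}\sigma_j(A)^2 \leq \sigma_j(A_k)^2 \leq (1+5kn)\sigma_j(A)^2,
\]
and analogously for the trailing block. Taking square roots yields $\mu_{m,n,k}=\sqrt{1+5kn}$.

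For the interpolative bound, the same Cholesky substitution yields $R_{11}^{-1}R_{12}=(A^\top A)_{11}^{-1}(A^\top A)_{12}$. Since $(A^\top A)_{11}$ is a local maximum volume submatrix of $A^\top A$, the Cramer's-rule argument in~\cref{eq:InterpolativeBounds} (proved for GE with local maximum volume pivoting) gives $\|(A^\top A)_{11}^{-1}(A^\top A)_{12}\|_{\max}\leq 1$, and hence $\|R_{11}^{-1}R_{12}\|_{\max}\leq 1$, which is $\nu\leq 1$ in~\cref{def:InterpolativeBoundsQR}.

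The main obstacle I anticipate is not any individual inequality but the careful bookkeeping that links the QR and GE viewpoints: checking that $(A^\top A)_k=A_k^\top A_k$ exactly (not merely up to equivalent singular values), and justifying $\sigma_j(A-A_k)=\sigma_j(R_{22})$ in the partial factorization regime where $Q_2$ in~\cref{eq.rankkQR} is not necessarily computed or orthonormal. Both can be handled by invoking an implicit completion to a full QR factorization (which does not change the column space of $A_k$ or the values of $R_{11}$, $R_{12}$, $R_{22}$), so the singular-value identities hold unconditionally. After that bookkeeping, the theorem follows from Theorem~\ref{thm.sufficientLU} by a single application and a square root.
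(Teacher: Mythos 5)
Your proposal is correct and follows essentially the same route as the paper's own proof: reduce QR on $A$ to GE on $A^\top A$ via the Cholesky link, verify that the selected columns give a $k\times k$ local maximum volume submatrix of $A^\top A$, invoke Theorem~\ref{thm.sufficientLU} with $m=n$, and translate back using $\sigma_j(A^\top A)=\sigma_j(A)^2$, $\sigma_j(A_k^\top A_k)=\sigma_j(A_k)^2$, and $\sigma_j(A^\top A - A_k^\top A_k)=\sigma_j(A-A_k)^2$, then take square roots. The only difference is that you spell out the block-identity $(A^\top A)_k=A_k^\top A_k$ and the interpolative-bound deduction a bit more explicitly than the paper, which is a presentational (and helpful) refinement rather than a different argument.
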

\begin{proof}
We may assume that $k\leq {\rm rank}(A)$ otherwise a local maximum volume pivot does not exist. Since a local maximum volume $m\times k$ submatrix of $A$ is also a local maximum volume $k\times k$ submatrix of $A^\top A$, there is a close connection between QR on $A$ and GE on $A^\top A$ with local maximum volume pivoting. In particular, if $A_k$ is a rank $k$ approximant of $A$ formed by QR with local maximum pivoting on $A$, then $A_k^\top A_k$ is a rank $k$ approximation to $A^\top A$ that GE could construct with local maximum volume pivoting on $A^\top A$. From~\cref{thm.sufficientLU} with $m=n$, we have
\[
\frac{1}{1+5kn}\sigma_j(A^\top A)\leq \sigma_j(A_k^\top A_k) \leq \left(1+5kn\right)\sigma_j(A^\top A), \quad 1\leq j\leq k.
\]
as well as 
\[
\frac{1}{1+5kn}\sigma_{k+j}(A^\top A)\leq \sigma_j(A^\top A-A_k^\top A_k) \leq \left(1+5kn\right)\sigma_{j+k}(A^\top A), \quad 1\leq j\leq n-k.
\]
The statement of the theorem follows by noting that $\sigma_j(A^\top A) = \sigma_j^2(A)$, $\sigma_j(A_k^\top A_k) = \sigma_j^2(A_k)$, and $\sigma_j(A^\top A-A_k^\top A_k) = \sigma_j^2(A-A_k)$ since 
\[
P^\top\!(A^\top \!\!A\!-\!A_k^\top\! A_k)P \!= \!\!\begin{bmatrix}0 \!\!& \!\!0 \\ 0 \!\!& \!\!R_{22}^\top R_{22} \end{bmatrix} \!\!=\!\! \begin{bmatrix}0 \\ R_{22}^\top Q_2^\top \end{bmatrix}\!\!\begin{bmatrix}0 \!&\! Q_2R_{22} \end{bmatrix} \!\!= \!((A-A_k)P)^\top\!((A-A_k)P). 
\] 
\end{proof}

\Cref{thm.sufficientQR} shows that QR with local maximum volume pivoting is a singular value estimator of $A$ to within a factor of $\sqrt{1+5kn}$. It is worth noting that this factor is independent of $m$, so QR is an ideal rank-revealer for a tall-skinny matrix.  For any $k$, $A_k$ estimates the first $k$ singular values of $A$ and $\|A-A_k\|_2\leq \sqrt{1+5kn}\sigma_{k+1}(A)$. The $\mu_{m,n,k}$ factor for QR with local maximum volume pivoting is significant better than that for GE. With $m=n=1000$ and $k=10$, the GE factor is about $3\times 10^4$ while the QR factor is under $200$. It is reasonable to expect that QR with local maximum volume pivoting is a slightly better singular value estimator in practice, though typically the decision to select GE or QR depends on whether it is convenient to have a one-sided or two-sided interpolative decomposition. 

The following example illustrates that~\cref{thm.sufficientQR} is essentially sharp in the sense that the $\mu_{m,n,k}$ factor in~\cref{thm.sufficientQR} can only be improved by reducing its constant terms.
\begin{example}
Consider the following odd-looking $(k+1)\times n$ matrix: 
\[
A =\sqrt{k+2}\left[\begin{array}{ccccc|ccc}
        d_1 & a_1 & a_1 & \cdots & a_1& a_1 & \cdots & a_1 \\
         & d_2 & a_2 & \cdots & a_2& a_2 & \cdots & a_2 \\
         & & \ddots & \ddots & \vdots & \vdots & \ddots & \vdots \\
         & & & d_{k-1} & a_{k-1} & a_{k-1} & \cdots & a_{k-1} \\
         & & & & d_k& a_{k} & \cdots & a_{k} \\
         & & & & & d_{k+1} & \cdots & d_{k+1} \\
    \end{array}\right],
\]
where $d_i = \sqrt{k-i+2}/\sqrt{k-i+3}$ and $a_i = -1/\sqrt{(k-i+2)(k-i+3)}$. The reason for such an odd-looking matrix is that $A^\top A$ is the same as the matrix in~\cref{ex:sharpness} with $m=n$. Due to the relationship between GE and QR, we immediately conclude that QR with local maximum volume cannot be a rank-revealer with $\mu_{m,n,k}<\sqrt{(k+2)(n-k)/4} = \mathcal{O}(\sqrt{kn})$. 
\end{example} 

\subsection{Computing a local maximum volume subset in QR factorization}\label{sec:QRalgo}
When searching for a local maximum volume pivot in QR, the submatrix is $m\times k$, so no row swaps are considered. Thus, in~\cref{alg:localMaxVol}, we need to compute quantities of the form ${\rm vol}(\hat{B})/{\rm vol}(B)$, where $B$ and $\hat{B}$ only differ in at most one column. This ratio can be computed very efficiently. Suppose that $B$ are the first $k$ columns of $AP$, where
\[
AP = \begin{bmatrix}B & C \end{bmatrix} = \begin{bmatrix}Q_1 & Q_2 \end{bmatrix} \begin{bmatrix}R_{11} & R_{21} \\ 0 & R_{22} \end{bmatrix}. 
\]
Then, one finds that~\cite[Lem.~3.1]{gu1996efficient}
\begin{equation} 
\frac{\vol(\hat{B})}{\vol(B)} = \sqrt{\big(R_{11}^{-1} R_{12}\big)^2_{ij} \!+\! \big((R_{11}^\top R_{11})^{-1}\big)_{ii} \big(R_{22}^\top R_{22}\big)_{jj}},
\label{eq:UpdateQR}
\end{equation} 
where $\hat{B}$ is obtained from $B$ by interchanging columns $i$ and $k+j$ of $AP$.  

The pitfalls with maximum volume pivoting for QR closely follow those for GE. There is still the potential for extremely long paths in~\cref{alg:localMaxVol} before it locates a local maximum volume submatrix. To overcome this issue, we describe the idea of near-local maximum volume pivoting in the following section.  

\section{Near-local maximum volume pivoting}\label{sec:NearLocalMaxVol}

To overcome the deficiencies with local maximum volume pivoting, the concept of local maximum volume can be weakened to near-local maximum volume. Given a relaxation parameter $\gamma>1$ (e.g., $\gamma=2$), one searches for a $p\times q$ submatrix $B$ such that 
\begin{equation} 
{\rm vol}(B) \geq \frac{1}{\gamma} {\rm vol}(\hat{B})
\label{eq:NearLocalMaxVol}
\end{equation} 
for all $p \times q$ submatrices $\hat{B}$ of $A$ that differ from $B$ in at most one row and at most one column. If a submatrix $B$ satisfies~\cref{eq:NearLocalMaxVol} then we say it is a $\gamma$-local maximum volume submatrix. The larger the value of $\gamma$, the easier it is to find such a submatrix but the weaker the final singular values estimates from GE and QR might be. 

\Cref{alg:localMaxVol} only needs to be slightly modified to allow one to compute a $\gamma$-local maximum volume pivot with parameter $\gamma>1$ (see~\cref{alg:NearLocalMaxVol}). There are two main changes: (i) one should start at any submatrix of $A$ that has a reasonably large volume.\footnote{One can also start~\cref{alg:localMaxVol} at an initial submatrix with a reasonably large volume and this may improve its pathlength in practice. However, since~\cref{alg:localMaxVol} is seeking a local maximum volume submatrix, one still cannot bound its pathlengths by a polynomial in $k$, $m$, and $n$.} For GE, we recommend selecting the submatrix found by GECP and for QR one can select the submatrix from CPQR, and (ii) one moves to a neighboring node in the volume submatrix graph (see~\cref{sec:volumegraph}) only if there is more than a factor of $\gamma$ increase in volume. With such details, a near-local maximum volume submatrix can be found in a computationally efficient way (see~\cref{sec:LocalMaxVolQR,sec:LocalMaxVolGE}).  When $\gamma>2$ and for any $A_{11}$, there is a quick look-up for all $(k(m-k)+1)(k(n-k)+1)-1 = \mathcal{O}(k^2mn)$ possible neighboring $\hat{A}_{11}$'s~\cite{schork2020rank}. So, the last step of~\cref{alg:NearLocalMaxVol} that requires checking all possible neighbors is not a computational bottleneck. 

\begin{algorithm}
\caption{Finding a $\gamma$-local maximum volume $p\times q$ submatrix of $A$ with $\gamma>1$.}
\label{alg:NearLocalMaxVol}
\begin{algorithmic}[1]
\State Start at a greedily-selected $p\times q$ submatrix $B$ of $A$ and compute ${\rm vol}(B)$.  
\For{each neighbor $\hat{B}$ of $B$ (see~\cref{sec:volumegraph})}
\State Compute ${\rm vol}(\hat{B})/{\rm vol}(B)$ (see~\cref{eq.volinterpreiterate,eq:UpdateQR} for efficient formulas).
\If{${\rm vol}(\hat{B})/{\rm vol}(B)>\gamma$}
\State Replace $B$ with $\hat{B}$ and go back to step 2. 
\EndIf
\EndFor
\State The algorithm terminates with a near-local maximum volume submatrix $B$. 
\end{algorithmic}
\end{algorithm}

\subsection{Near-local maximum volume pivoting in GE}\label{sec:LocalMaxVolGE}
For GE with near-local maximum volume pivoting, we recommend that~\cref{alg:NearLocalMaxVol} starts with an initial $k\times k$ submatrix selected by GECP. We now show that this gives a submatrix that keeps the pathlengths in~\cref{alg:NearLocalMaxVol} short. 

\subsubsection{Volume of GECP pivot} To bound the longest possible pathlength with the initial submatrix coming from GECP, we need to lower bound the volume of the submatrix selected by GECP.

\begin{lemma}
Let $A_{11} \in \mathbb{R}^{k \times k}$ be a submatrix of $A \in \mathbb{R}^{m \times n}$ selected by GECP. Then, 
\[
{\rm vol}(A_{11}) \geq \frac{1}{4^{k+1} (k+\rho_{k+1}) \sqrt{(m-k)(n-k)}} \prod_{j=1}^k \sigma_j(A),
\]
where $\rho_{k+1}$ is the growth rate GECP on a $(k+1)\times (k+1)$ matrix~\cite{wilkinson1961error}.
\label{lem:GECPvolume}
\end{lemma}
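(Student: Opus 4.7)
The plan is to extend GECP by one additional iteration and then bound the volume of the resulting $(k+1)\times(k+1)$ pivot matrix, both above by $|\mu_{k+1}|\cdot\vol(A_{11})$ and below (in terms of $\prod_{j=1}^{k+1}\sigma_j(A)$) via a two-sided swap argument together with Wilkinson's growth-factor analysis.

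\emph{Step 1.} I would extend GECP on $A$ for one further step to obtain a $(k+1)\times(k+1)$ pivot matrix $A'_{11}$ with $A_{11}$ as its leading principal block. The LU identity then gives $\vol(A'_{11})=\vol(A_{11})\cdot|\mu_{k+1}|$, where $\mu_{k+1}$ is the $(k+1)$-st GECP pivot and, by the complete-pivoting rule, equals the largest absolute entry of the Schur complement $S(A_{11})$. This factorization is the bridge between the quantity of interest and the singular values of $A$.

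\emph{Step 2.} I would lower-bound $\vol(A'_{11})$ by comparing it to a globally maximum-volume $(k+1)\times(k+1)$ submatrix $B^\star$ of $A$, via a two-sided swap analysis in the spirit of~\cite{gu1996efficient}. Using the volume-update identity in~\cref{eq.volinterpreiterate} together with the greedy max-entry rule at each GECP step, each single-row or single-column swap can improve the volume by at most a factor of $2$; reaching $B^\star$ from $A'_{11}$ takes at most $k+1$ row swaps and $k+1$ column swaps, yielding $\vol(A'_{11})\ge\vol(B^\star)/4^{k+1}$.

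\emph{Step 3.} I would upper-bound $|\mu_{k+1}|$ via the growth factor. Eckart-Young gives $\sigma_{k+1}(A)\le\sigma_1(S(A_{11}))$, and by Wilkinson's growth-factor inequality $\|S(A_{11})\|_{\max}\le\rho_{k+1}\|A\|_{\max}$ together with the block structure of the Schur complement, one can deduce $|\mu_{k+1}|\le(k+\rho_{k+1})\sigma_{k+1}(A)$. Combining this with $\vol(B^\star)\le\prod_{j=1}^{k+1}\sigma_j(A)$ (Cauchy interlacing), the final conversion between the max-norm pivot and the $2$-norm of the Schur complement contributes the $\sqrt{(m-k)(n-k)}$ factor, after which rearranging $\vol(A_{11})=\vol(A'_{11})/|\mu_{k+1}|$ yields the stated bound.

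The main obstacle, I expect, is the two-sided swap analysis of Step~2. Unlike CPQR, whose analysis only swaps columns, GECP pivots both rows and columns, and the two families of swaps interact through the Schur-complement formula. The plan is to sequence row swaps after column swaps so that each family contributes an independent factor of $2^{k+1}$, with care taken to show that interleaving does not inflate this further than $4^{k+1}$. The growth-factor estimate in Step~3 is also delicate because $\rho_{k+1}$ is classically stated for square matrices and must be adapted to the rectangular Schur-complement setting that arises when GECP is terminated early.
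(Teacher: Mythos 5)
Your proposal is structurally quite different from the paper's proof and has a fatal gap at Step~2.

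The central claim in Step~2 — that ``each single-row or single-column swap can improve the volume by at most a factor of $2$'' for a GECP-selected pivot — is false, and the paper itself contains the counterexample. In \cref{sec:assessment} the paper shows that GECP applied to $K_n^\top K_n$ (Kahan matrix) selects a submatrix $B$ for which a single neighbor $\hat{B}$ in the volume submatrix graph satisfies $\vol(\hat{B})/\vol(B) \geq s^2(1+s)^{2(k-1)}$, which is exponential in $k$; the same phenomenon for single column swaps under CPQR appears in~\cref{eq:BoundonFB}. If your per-swap claim were true, GECP would already output a $2$-local-maximum-volume submatrix, and \cref{alg:NearLocalMaxVol} would be superfluous — which the paper explicitly shows it is not. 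The greedy max-entry rule in GECP controls only the \emph{diagonal} entries of $D$ relative to the current Schur complement, not the volume ratio of arbitrary one-swap neighbors, so the $4^{k+1}$ factor cannot be obtained by counting swaps.

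There is a second, independent logical issue in Step~3: you write $\vol(B^\star)\le\prod_{j=1}^{k+1}\sigma_j(A)$ (Cauchy interlacing) and attempt to use it in the chain $\vol(A_{11}) = \vol(A'_{11})/|\mu_{k+1}| \geq \vol(B^\star)/(4^{k+1}|\mu_{k+1}|)$, but this chain requires a \emph{lower} bound on $\vol(B^\star)$, not an upper bound. The available lower bounds for the globally maximum-volume submatrix (e.g.~\cite[Thm.~2]{miranian2003strong} applied to $B^\star$) only give $\sigma_j(B^\star)\ge\sigma_j(A)/\text{poly}(m,n,k)$ for each $j$, and taking the product over $j=1,\ldots,k+1$ incurs a $\text{poly}^{k+1}$ factor — far weaker than the stated bound.

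The paper instead works entirely inside the GECP-produced LDU structure: GECP forces $\|L\|_{\max}\le 1$ and $\|U\|_{\max}\le 1$ for the unit-triangular factors, which by the classical bound for such matrices gives $\|L^{-1}\|_F,\|U^{-1}\|_F\le 2^k$ (this is where the $4^k$ really comes from, not from swap counting). The growth rate $\rho_{k+1}$ enters via comparing $\|S(A_{11})\|_{\max}$ to $\min|\mathrm{diag}(D)|$, both of which are diagonal entries produced by GECP, not via the $(k+1)$st pivot $\mu_{k+1}$. These bounds feed into the Miranian--Gu inequality for $\sigma_j(A)/\sigma_j(A_{11})$. Your Step~1 determinant identity is correct but is never needed; the rest of your route does not recover the result.
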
 
\begin{proof}
Consider the partial LU factorization in~\cref{eq.rankkLU} and factorize $A_{11}$ into an LDU decomposition, i.e., $A_{11} = LDU$. Since $A_{11}$ is selected by GECP, we must have
    \[
        \left\|D^{-1}L^{-1}\begin{bmatrix}A_{11} & A_{12}\end{bmatrix}\right\|_{\max} = \left\|\begin{bmatrix} U & D^{-1}L^{-1}A_{12}\end{bmatrix}\right\|_{\max} \leq 1
    \]
and
    \[
        \left\|\begin{bmatrix}A_{11} \\ A_{21}\end{bmatrix}U^{-1}D^{-1}\right\|_{\max} = \left\|\begin{bmatrix}L \\ A_{21}U^{-1}D^{-1}\end{bmatrix}\right\|_{\max} \leq 1.
    \]
    Since $L$ and $U$ are $k\times k$ triangular matrices with ones on the diagonal entries and $\|L\|_{\max}\leq 1$ and $\|U\|_{\max}\leq 1$, we find that $\|L^{-1}\|_F\leq 2^k$ and $\|U^{-1}\|_F \leq 2^k$~\cite[p.~107]{faddeev1970solution}. Hence, we have
    \begin{equation}\label{eq.interpbdrightcomplete}
        \|A_{11}^{-1}A_{12}\|_F \leq \|U^{-1}\|_F \|D^{-1}L^{-1}A_{12}\|_F \leq 2^k \sqrt{k(n-k)}
    \end{equation}
    and by a similar argument, we have $\|A_{21}A_{11}^{-1}\|_F\leq 2^k \sqrt{k(m-k)}$. Next, we have
    \begin{equation} 
        \frac{\sigma_1(S(A_{11}))}{\sigma_k(A_{11})} = \|S(A_{11})\|_2 \|A_{11}^{-1}\|_2 \leq \frac{\sqrt{(m-k)(n-k)}\|S(A_{11})\|_{\max}}{\min |\text{diag}(D)|} \|L^{-1}\|_F \|U^{-1}\|_F,
        \label{eq:ratiobounding}
    \end{equation}
    where we used the norm equivalence between the max norm and $2$-norm. Let $j$ be an entry that minimizes $|D(j,j)|$. Then, we have $\min |\text{diag}(D)| = |D(j,j)| = \|S(A(1\!:\!(j\!-\!1),1\!:\!(j\!-\!1)))\|_{\max}$, where we take $S(A(1\!:\!(j\!-\!1),1\!:\!(j\!-\!1))) = A$ if $j = 1$. But $S(A_{11})$ is also the $(k-j+1)$th Schur complement of $S(A(1\!:\!(j\!-\!1),1\!:\!(j\!-\!1)))$ due to complete pivoting. Hence, we have
    \[
        \frac{\|S(A_{11})\|_{\max}}{\min |\text{diag}(D)|} \leq \rho_{k+1},
    \]
    where $\rho_{k+1}$ is the growth rate of GECP on a $(k+1)\times (k+1)$ matrix. Thus, from~\cref{eq:ratiobounding}, we find that $\sigma_1(S(A_{11}))/\sigma_k(A_{11}) \leq \sqrt{(m-k)(n-k)}\rho_{k+1}4^k$. Now, if $\sigma_j(A_{11}) > 0$, by~\cite{miranian2003strong},~\cref{eq.interpbdrightcomplete},~$\|A_{21}A_{11}^{-1}\|_F\leq 2^k \sqrt{k(m-k)}$, and $\sigma_1(S(A_{11}))/\sigma_k(A_{11}) \leq \sqrt{(m-k)(n-k)}\rho_{k+1}4^k$, we have
    \begin{align*}
        \frac{\sigma_j(A)}{\sigma_j(A_{11}\!)}\! &\leq \sqrt{1 \!+\! \|A_{21}A_{11}^{-1}\|_F^2 \!+\! \sqrt{\frac{m-k}{n-k}}\frac{\sigma_1(S(A_{11}))}{\sigma_k(A_{11})}} \times \\ 
          &\qquad  \qquad  \qquad\qquad\qquad  \qquad\qquad  \sqrt{1 \!+\! \|A_{11}^{-1}A_{12}\|_F^2 \!+\! \sqrt{\frac{n-k}{m-k}}\frac{\sigma_1(S(A_{11}))}{\sigma_k(A_{11})}} \\
        &\leq \sqrt{1 + 4^kk(m-k) + 4^k\rho_{k+1}(m-k)}\sqrt{1 + 4^kk(n-k) + 4^k\rho_{k+1}(n-k)} \\
        &\leq 4^{k+1} (k+\rho_{k+1}) \sqrt{(m-k)(n-k)}.
    \end{align*}
The result follows as ${\rm vol}(A_{11}) = \prod_{j=1}^k \sigma_j(A_{11})$. 
\end{proof}

The growth rate for GECP can in principle be extremely large. It is known that $\rho_k\leq k^{1/2}\left(2\cdot3^{1/2}\cdots n^{1/(n-1)}\right)^{1/2}\sim c n^{1/2} n^{\tfrac{1}{4}\log n}$ for some small constant $c$~\cite{wilkinson1961error}. However, this bound is very pessimistic and it is non-trivial to even find matrices with $\rho_k>k$~\cite{edelman2023some}. \Cref{lem:GECPvolume} tells us that the volume of a submatrix selected by GECP could be {\em much} smaller than the product of the first $k$ singular values of $A$. Such a submatrix may seem too poor to use as an initial guess; however, it is important to realize that (1)~\cref{alg:NearLocalMaxVol} increases the volume of a submatrix exponentially in terms of pathlengths (i.e., after $\ell$ steps the volume has increased by at least a factor of $\gamma^\ell$) and (2) \cref{lem:GECPvolume} is extremely pessimistic and we observe that GECP selects much better initial submatrices in practice. 

Doing GECP for $k$ steps is also a very efficient algorithm. The first step of GECP on an $m \times n$ matrix requires: (1) $m n - 1$ comparisons to find the maximum entry in absolute value and (2) for each step we use $(m-1)(2n-1)$ flops to do the elimination. For $k$ steps, remembering that one column and row are being eliminated after each step, gives a total of
\[
\begin{aligned}
	\sum_{j=1}^k (m\!-\!j+1)(n\!-\!j+1) \!- \!1 &+ \!\sum_{j=1}^k (m\!-\!j)(2(n\!-\!j)\!+\!1)  \leq 3\!\sum_{j=1}^k \!(m\!-\!j+1)(n\!-\!j+1)\\
	& =\frac{1}{6}k\left(2k^2 - 3k(m+n+1) + m(6n+3) + 3n +1\right)
\end{aligned}
\]
floating-point operations. 

\subsubsection{Using GECP for finding a near-local maximum volume pivot}
The volume of the $k\times k$ submatrix is increased by a factor of at least $\gamma^\ell$ after $\ell$ steps in~\cref{alg:NearLocalMaxVol}. The largest possible volume of any $k\times k$ submatrix of $A$ is $\prod_{j=1}^k \sigma_j(A)$ so we the value of $\ell$ must satisfy
\[
\frac{1}{4^{k+1} (k+\rho_{k+1}) \sqrt{(m-k)(n-k)}}\prod_{j=1}^k \sigma_j(A) \gamma^\ell \leq \prod_{j=1}^k \sigma_j(A). 
\]
In other words, we know that $\ell \leq (k+1)\log_\gamma(4) + \log_\gamma(k+\rho_{k+1})+\log_\gamma(n-k)/2+ \log_\gamma(m-k)/2$. Therefore, the path lengths from~\cref{alg:NearLocalMaxVol} are always modest.

In~\cref{fig:LUexperiments}, we show that GE with $3$-local maximum volume pivoting is only within a factor of $1.4$ slower than GECP on a $500\times 500$ matrix. 
\begin{figure} 
\centering
\begin{minipage}{.49\textwidth}
\begin{overpic}[width=\textwidth]{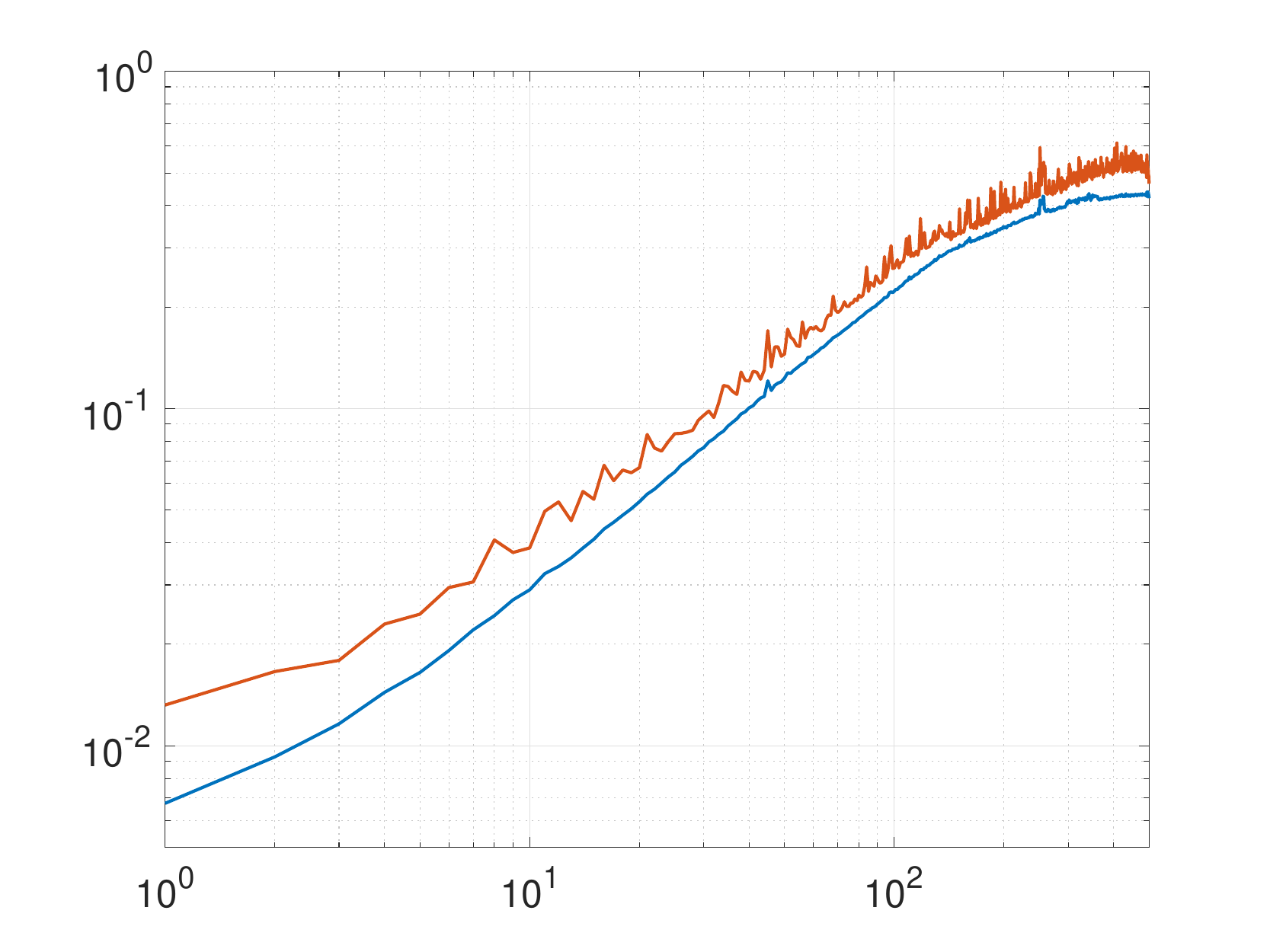}
\put(50,0) {$k$} 
\put(0,0) {\rotatebox{90}{Computational time in seconds}}
\put(60,38) {\rotatebox{40}{GECP}}
\put(30,30) {\rotatebox{36}{GE with $3$-local maxvol}}
\end{overpic} 
\end{minipage}
\begin{minipage}{.49\textwidth}
\begin{overpic}[width=\textwidth]{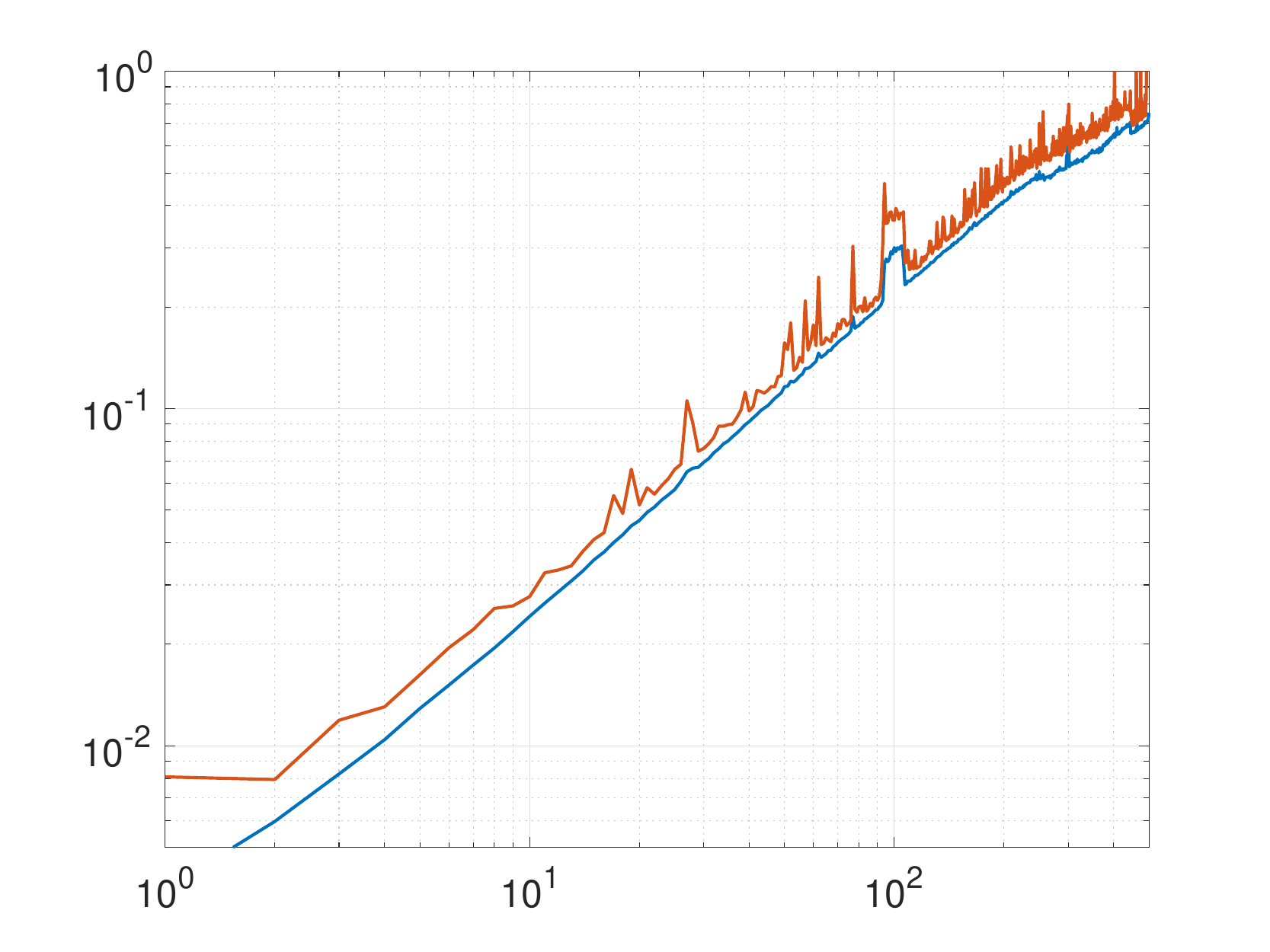}
\put(50,0) {$k$} 
\put(0,0) {\rotatebox{90}{Computational time in seconds}}
\put(60,38) {\rotatebox{40}{CPQR}}
\put(30,30) {\rotatebox{38}{QR with $2$-local maxvol}}
\end{overpic} 
\end{minipage}
\caption{The computational cost of GE and QR for $1\leq k\leq 500$ with near-local maximum volume pivoting on a $500\times 500$ randomly generated matrix with standard Gaussian entries. Here, we are finding a near-local maximum with~\cref{alg:NearLocalMaxVol}. Left: For GE, we select $\gamma=3$ and compare the timings against GECP. We find that the computational cost of GE with $3$-local maximum volume pivoting is no more than $1.4\times$ slower than GECP. A similar observation is made in~\cite{schork2020rank}. Right: For QR, we select $\gamma=2$ and compare the timings against our implementation of CPQR. We find that the computational cost of QR with $2$-local maximum volume pivoting is no more than $2\times$ slower than CPQR.}
\label{fig:LUexperiments} 
\end{figure}

\subsubsection{GE with near-local maximum volume pivoting is a rank-revealer}
The proof of~\cref{thm.sufficientQR} can be easily adapted to allow for near-local maximum volume pivoting. 
\begin{theorem}\label{thm.sufficientLUnear}
Let $\gamma>1$.  GE with $\gamma$-local maximum volume pivoting is a rank-revealer with $\mu_{m,n,k} = 1+5\gamma^2 k\sqrt{mn}$ (see~\cref{eq:GoodLeadingSV} and~\cref{eq:GoodTrailingSV}) and computes a partial LU factorization satisfying interpolative bounds with $\nu\leq\gamma$ (see~\cref{def:InterpolativeBoundsGE}). 
\end{theorem}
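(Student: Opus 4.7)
The plan is to mimic the proof of \cref{thm.sufficientLU} step by step, tracking the sole place where the local maximum volume property entered as a sharp unit bound (namely, Cramer's rule in \cref{eq:InterpolativeBounds}) and replacing $1$ with $\gamma$ throughout. Since a $\gamma$-local maximum volume pivot satisfies ${\rm vol}(\hat{A}_{11})/{\rm vol}(A_{11}) \leq \gamma$ for every neighboring submatrix $\hat{A}_{11}$, the identity in \cref{eq:InterpolativeBounds} immediately yields
\[
|(A_{11}^{-1}A_{12})_{i,j}| = \left|\frac{{\rm vol}(\hat{A}_{11})}{{\rm vol}(A_{11})}\right| \leq \gamma, \qquad |(A_{21}A_{11}^{-1})_{j,i}| \leq \gamma,
\]
which is exactly the claim that the interpolative bound constant $\nu \leq \gamma$ from \cref{def:InterpolativeBoundsGE}.

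Next I would re-derive the Miranian--Gu style singular value inequalities (used as a black box in \cref{eq:FirstInequality} and \cref{eq:eq2}) while carrying $\gamma$ through. The standard derivation bounds $\sigma_j(A)/\sigma_j(A_{11})$ and $\sigma_{k+j}(A)/\sigma_j(S(A_{11}))$ in terms of $1+\|A_{21}A_{11}^{-1}\|_F^2$ and $1+\|A_{11}^{-1}A_{12}\|_F^2$. With the new entrywise bound $\gamma$, these Frobenius norms inherit the bounds $\|A_{11}^{-1}A_{12}\|_F \leq \gamma\sqrt{k(n-k)}$ and $\|A_{21}A_{11}^{-1}\|_F \leq \gamma\sqrt{k(m-k)}$. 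Propagating this through the Miranian--Gu inequalities produces
\[
\sigma_j(A_{11}) \leq \sigma_j(A) \leq \bigl(2\gamma^2 k\sqrt{(m-k)(n-k)} + \sqrt{(1+\gamma^2 k(n-k))(1+\gamma^2 k(m-k))}\bigr)\sigma_j(A_{11}),
\]
and similarly for $\sigma_j(S(A_{11}))$ and $\sigma_{k+j}(A)$. Coarsening as in the original proof yields the factor $1+5\gamma^2 k\sqrt{mn}$, which handles the lower inequality in \cref{eq:GoodLeadingSV} and both inequalities in \cref{eq:GoodTrailingSV} (using $\sigma_j(S(A_{11})) = \sigma_j(A - A_k)$ and Eckart--Young, exactly as before).

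Finally, for the upper bound in \cref{eq:GoodLeadingSV}, I would use the low-rank form of $A_k$ in \cref{eq.approxLU} together with \cref{eq:betweenA11Ak}, now giving
\[
\sigma_j(A_k) \leq (1+\gamma\sqrt{k(m-k)})(1+\gamma\sqrt{k(n-k)})\,\sigma_j(A_{11}) \leq (1+3\gamma^2 k\sqrt{mn})\,\sigma_j(A),
\]
since $\sigma_j(A_{11}) \leq \sigma_j(A)$ from the previous step. Combined with the trivial submatrix bound $\sigma_j(A_{11}) \leq \sigma_j(A_k)$, this closes \cref{eq:GoodLeadingSV} with the same constant $1+5\gamma^2 k\sqrt{mn}$.

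The main obstacle is bookkeeping rather than any new idea: one must confirm that the Miranian--Gu proof really only uses the entrywise interpolative bound in a way that produces a quadratic dependence on $\gamma$ (two independent factors, one from each of $A_{11}^{-1}A_{12}$ and $A_{21}A_{11}^{-1}$), and that no hidden use of the sharp unit volume ratio appears in the $S(A_{11})$ manipulation. Once this audit is done, the constant $\gamma^2$ drops out naturally and no worse dependence (such as $\gamma^{2k}$) is incurred, which is what allows $\mu_{m,n,k}$ to scale only linearly in $k$ as $\gamma$ grows moderately.
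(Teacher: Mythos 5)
Your proposal is correct and follows essentially the same route as the paper: the paper also proves this by citing a $\gamma$-relaxed version of the Miranian--Gu singular value inequalities (which yields exactly your modified versions of~\cref{eq:FirstInequality} and~\cref{eq:eq2} with $\gamma^2$ factors inserted) and then repeating the remaining steps of~\cref{thm.sufficientLU} verbatim. The only minor imprecision in your write-up is the description of the underlying Miranian--Gu bound: it controls $\sigma_j(A)/\sigma_j(A_{11})$ not just via $1+\|A_{21}A_{11}^{-1}\|_F^2$ and $1+\|A_{11}^{-1}A_{12}\|_F^2$ but also via a ratio $\sigma_1(S(A_{11}))/\sigma_k(A_{11})$ (compare the inequality invoked in the paper's proof of~\cref{lem:GECPvolume}); however, you correctly flag the Schur-complement manipulation as the place requiring an audit, and that ratio is itself controlled through~\cref{eq.volinterpreiterate} in a way that scales like $\gamma^2$, so the conclusion and constant agree with the paper's.
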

\begin{proof}
The proof closely follows the argument for in~\cref{thm.sufficientLU}. The only difference is that when~\cref{eq:NearLocalMaxVol} holds with $\gamma \geq 1$,~\cite[Thm.~2]{gu1996efficient} shows that~\cref{eq:FirstInequality} and~\cref{eq:eq2} become
	\begin{equation} 
		\begin{aligned}
		\sigma_j(A_{11}) \!\leq\! \sigma_j(A) &\leq \left(\!2\gamma^2 k\sqrt{(m\!-\!k)(n\!-\!k)} \!+\! \sqrt{(1\!+\! \gamma^2 k(n\!-\!k)\!)(1\!+\! \gamma^2 k(m\!-\!k))}\right)\!\sigma_j(A_{11}) \\
		&\leq \left(1+5\gamma^2k\sqrt{mn}\right)\sigma_j(A_{11}), \quad 1\leq j\leq k,
		\end{aligned}
		\label{eq:FirstInequalitynear}
		\end{equation} 
		and
		 \begin{equation} 
 		\begin{aligned}
		&\sigma_{k+j}(A) \leq \sigma_j(S(A_{11})) \\
		&\qquad \leq \left(2\gamma^2 k\sqrt{(m\!-\!k)(n\!-\!k)} \!+\!\! \sqrt{(1\!+\!\gamma^2 k(n\!-\!k))(1\!+\!\gamma^2 k(m\!-\!k))}\right)\! \sigma_{k+j}(A) \\
		&\qquad\leq \left(1+5\gamma^2k\sqrt{mn}\right)\sigma_{k+j}(A), \quad 1\leq j\leq \min(m,n)\!-\!k,
		\end{aligned}
		\label{eq:eq2near} 
	\end{equation}
	respectively.
\end{proof}

\subsection{Near-local maximum volume pivoting in QR}\label{sec:LocalMaxVolQR}
For QR, we recommend that~\cref{alg:NearLocalMaxVol} starts with an initial $m\times k$ submatrix selected by CPQR. Our suggestion here is very similar to~\cite[Alg.~4]{gu1996efficient}, except they select $\gamma$ to be a small power of $n$ such as $\gamma = \sqrt{n}$, while we insist on the fact that $\gamma$ should be a constant (e.g., $\gamma=2$) independent of $n$ (see~\cref{sec:AfterTheorem}).  

Suppose that the initial submatrix selected by CPQR is denoted by $B$, then we know that~\cite[Thm.~7.2]{gu1996efficient}
\[
{\rm vol}(B) = \prod_{j=1}^k \sigma_j(B) \geq \frac{1}{2^k\sqrt{n-k}}\prod_{j=1}^k \sigma_j(A). 
\]
This volume is increased by a factor of at least $\gamma^\ell$, if the path length of~\cref{alg:NearLocalMaxVol} is $\ell$. Since the matrix $B$ in~\cref{alg:NearLocalMaxVol} is always a submatrix of $A$, its largest possible volume is $\prod_{j=1}^k \sigma_j(A)$. Hence, the value of $\ell$ must satisfy
\[
\frac{1}{2^k\sqrt{n-k}}\prod_{j=1}^k \sigma_j(A) \gamma^\ell \leq \prod_{j=1}^k \sigma_j(A). 
\]
In other words, we know that $\ell\leq \log_{\gamma}(2^k\sqrt{n-k}) = k\log_\gamma(2)+\log_\gamma(n-k)/2$. Therefore, the path lengths from~\cref{alg:NearLocalMaxVol} are always modest. In~\cref{fig:LUexperiments}, we show that QR with $2$-local maximum volume pivoting is only within a factor of $2$ slower than CPQR on a $500\times 500$ matrix.

At each node in the volume submatrix graph, one may have to consider all its $k(n-k)$ neighbors before finding a node with a $\gamma$ factor increase. From here, one can carefully calculate the cost of computing the partial QR factorization to find that it requires $\mathcal{O}(kmn\log_{\gamma}(2) + mn\log_{\gamma}(n))$ operations~\cite[Sec.~4.4]{gu1996efficient}.

\subsubsection{QR with near-local maximum volume pivoting is a rank-revealer}\label{sec:AfterTheorem} 

The proof of~\cref{thm.sufficientQR} can be easily adapted to allow for near-local maximum volume pivoting. We state it for completeness.  
\begin{theorem}\label{thm.sufficientQRnear}
Let $\gamma> 1$. QR with $\gamma$-local maximum volume pivoting is a rank-revealer with $\mu_{m,n,k} = \sqrt{1+5\gamma^2 kn}$ (see~\cref{eq:GoodLeadingSV} and~\cref{eq:GoodTrailingSV}) and computes a partial QR factorization satisfying an interpolative bound with $\nu\leq \gamma$ (see~\cref{def:InterpolativeBoundsQR}). 
\end{theorem}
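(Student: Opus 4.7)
The plan is to mirror the proof of Theorem~\ref{thm.sufficientQR} almost verbatim, with Theorem~\ref{thm.sufficientLUnear} replacing Theorem~\ref{thm.sufficientLU} as the GE input. Let $\mathcal{J}$ be the column index set selected by QR with $\gamma$-local maximum volume pivoting, so that $A(:,\mathcal{J})$ is a $\gamma$-local maximum volume $m \times k$ submatrix of $A$. The first task is to track the relaxation parameter through the QR-to-GE correspondence, by verifying that $A(:,\mathcal{J})^\top A(:,\mathcal{J})$ is a near-local maximum volume $k \times k$ submatrix of $A^\top A$. Carrying the $\gamma$ factor through the chain of inequalities already used in the proof of Theorem~\ref{thm.sufficientQR}, for any index sets $\hat{\mathcal{I}}, \hat{\mathcal{J}}$ each differing from $\mathcal{J}$ in at most one entry, we obtain
\[
{\rm vol}\!\left(A(:,\hat{\mathcal{I}})^\top A(:,\hat{\mathcal{J}})\right) \leq \prod_{j=1}^k \sigma_j(A(:,\hat{\mathcal{I}}))\,\sigma_j(A(:,\hat{\mathcal{J}})) \leq \gamma^{2}\,{\rm vol}\!\left(A(:,\mathcal{J})^\top A(:,\mathcal{J})\right).
\]

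Second, letting $A_k$ denote the rank-$k$ approximant produced by QR, I would observe that $A_k^\top A_k$ is a rank-$k$ approximation of $A^\top A$ realizable by GE with this near-local maximum volume pivot, and then apply Theorem~\ref{thm.sufficientLUnear} to $A^\top A$ (which is $n \times n$). Converting back via $\sigma_j(A^\top A) = \sigma_j^2(A)$, $\sigma_j(A_k^\top A_k) = \sigma_j^2(A_k)$, and the partial Cholesky identity $\sigma_j(A^\top A - A_k^\top A_k) = \sigma_j^2(A-A_k)$ already derived in the proof of Theorem~\ref{thm.sufficientQR}, then taking square roots of both sides of the singular value sandwiches yields~\cref{eq:GoodLeadingSV,eq:GoodTrailingSV} with $\mu_{m,n,k}$ of the form $\sqrt{1 + \cO(\gamma^{c} k n)}$ for some constant power $c$.

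Third, for the interpolative bound $\nu \leq \gamma$, I would sidestep the reduction and argue directly from~\cref{eq:UpdateQR}: for any $(i,j)$, interchanging column $i$ of the pivot with column $k+j$ produces a neighboring submatrix $\hat{B}$, and the $\gamma$-local maximum volume property forces ${\rm vol}(\hat{B})/{\rm vol}(B) \leq \gamma$. Since~\cref{eq:UpdateQR} expresses this ratio as a square root of $(R_{11}^{-1}R_{12})_{ij}^2$ plus a nonnegative term, we immediately get $|(R_{11}^{-1}R_{12})_{ij}| \leq \gamma$ entrywise, hence $\|R_{11}^{-1}R_{12}\|_{\max} \leq \gamma$.

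The part I expect to be delicate is pinning down the exact power of $\gamma$ inside the square root of $\mu_{m,n,k}$. A naive bookkeeping through the reduction delivers a $\gamma^2$-local property on $A^\top A$, which when pushed through Theorem~\ref{thm.sufficientLUnear} yields $\sqrt{1+5\gamma^{4}kn}$ rather than the sharper $\sqrt{1+5\gamma^{2}kn}$ claimed in the statement. To recover the stated constant, I would exploit the symmetric positive semidefinite structure of $A^\top A$: its maximum volume pivots can always be chosen symmetric, and for symmetric swaps the relaxation factor inherited from the $\gamma$-local pivoting of $A$ is also $\gamma^{2}$ on ${\rm vol}$ but enters only once into the Gu--Eisenstat style bound underlying Theorem~\ref{thm.sufficientLUnear}. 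Alternatively, one could bypass the reduction and repeat the Miranian--Gu argument directly on the partial QR factorization, using $\|R_{11}^{-1}R_{12}\|_{\max} \leq \gamma$ in place of the analogous GE bound---this is essentially the route taken in Gu--Eisenstat's original proof of~\cite[Thm.~3.2]{gu1996efficient}, and would give the stated $\sqrt{1+5\gamma^{2}kn}$ with no loss.
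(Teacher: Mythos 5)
Your proposal follows the paper's route---the paper's own proof is the one-line remark that it ``closely follows the proof of Theorem~\ref{thm.sufficientQR}, except one should use Theorem~\ref{thm.sufficientLUnear}''---and the subtlety you flag in the last paragraph is real, not paranoia. Squaring through the QR-to-GE correspondence literally shows that $A(:,\mathcal{J})^\top A(:,\mathcal{J})$ is only a $\gamma^2$-local maximum volume submatrix of $A^\top A$ (the worst case is a symmetric swap, where the volume is a perfect square and the factor $\gamma^2$ is tight), so applying Theorem~\ref{thm.sufficientLUnear} as a black box with relaxation parameter $\gamma^2$ would give $\sqrt{1+5\gamma^4 kn}$, not the stated $\sqrt{1+5\gamma^2 kn}$. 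Your direct argument from~\cref{eq:UpdateQR} giving $\nu\leq\gamma$ is exactly right and is itself the clue to closing the gap: what Miranian--Gu's inequality actually consumes is the interpolative bound $\nu$ and the ratio $\sigma_1(S)/\sigma_k(A_{11})$, not the local-max-vol parameter per se. For $A^\top A$ the interpolative bound is governed by \emph{asymmetric} one-column swaps, i.e. submatrices of the form $A(:,\mathcal{J})^\top A(:,\hat{\mathcal{J}})$ with only $\hat{\mathcal{J}}$ perturbed, and these pay only $\gamma$ because $\mathrm{vol}\bigl(A(:,\mathcal{J})^\top A(:,\hat{\mathcal{J}})\bigr)\leq\mathrm{vol}\bigl(A(:,\mathcal{J})\bigr)\cdot\gamma\,\mathrm{vol}\bigl(A(:,\mathcal{J})\bigr)$; the Schur ratio is controlled by the symmetric-swap term $\bigl((R_{11}^\top R_{11})^{-1}\bigr)_{ii}(R_{22}^\top R_{22})_{jj}\leq\gamma^2$ from~\cref{eq:UpdateQR}. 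Plugging $\nu=\gamma$ (not $\gamma^2$) and $\sigma_1(S)/\sigma_k(A_{11})\leq\gamma^2 k(n-k)$ into the Miranian--Gu inequality yields $\sigma_j^2(A)/\sigma_j^2(R_{11})\leq 1+\mathcal{O}(\gamma^2 kn)$, and the square root recovers the stated constant. That is the precise content of your ``enters only once'' remark, and your alternative (b)---repeating Gu--Eisenstat's Theorem~3.2 argument directly on the partial QR---is the cleanest way to make it airtight, since that theorem takes the local-max-vol parameter $f=\gamma$ as direct input and delivers $\sqrt{1+f^2 k(n-k)}$ with no squaring detour. So: your approach matches the paper's, and your bookkeeping is more careful than the paper's sketch requires the reader to be; the fix is exactly the one you name.
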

\begin{proof}
The proof of this closely follows the proof of~\Cref{thm.sufficientQR}, except one should use~\Cref{thm.sufficientLUnear}.
\end{proof}

In~\cite{gu1996efficient} they select $\gamma$ to be a small power of $n$ such as $\gamma = \sqrt{n}$
which results in an algorithmic complexity of $\mathcal{O}(kmn)$ but $\mu_{m,n,k} = \mathcal{O}(\sqrt{k}n)$. Since CPQR delivers $\mu_{m,n,k} = \mathcal{O}(2^k\sqrt{n})$---which is asymptotically better in $n$---and near-local maximum volume is efficient to compute, we advocate for $\gamma$ to be a small constant (e.g., $\gamma=2$). QR with near-local maximum volume pivoting and $\gamma=2$ has $\mu_{m,n,k} = \mathcal{O}(\sqrt{kn})$, which matches CPQR's factor in $n$ and is much better in $k$. 

\section{Near-local maximum volume pivoting is necessary}\label{sec:Necessary}
One may wonder if there are pivoting strategies, other than near-local maximum volume pivoting, that ensure that GE and QR are rank-revealers. It turns out that any partial LU or QR factorization that can be used to estimate singular values and satisfy interpolative bounds must have found a near-local maximum volume submatrix of $A$ (see~\cref{eq:NearLocalMaxVol}). Therefore, the only pivoting strategy in GE and QR that ensure they are rank-revealers is  near-local maximum volume pivoting. 

\subsection{All GE-based rank-revealers use near-local maximum volume pivoting}
\label{sec:NecessaryLU}
We start by considering the partial LU factorization of $A$ given in~\cref{eq.rankkLU}. We show that provided three conditions hold: (1) $\sigma_k(A_{11})\approx \sigma_k(A)$, (2) $\|S(A_{11})\|_2\approx \sigma_{k+1}(A)$, and (3) the factorization satisfies interpolative bounds (see~\cref{def:InterpolativeBoundsGE}), then $A_{11}$ is a near-local maximum volume $k\times k$ submatrix of $A$. Notably, the first two conditions are actually slightly weaker than~\cref{eq:GoodTrailingSV,eq:GoodLeadingSV} since they only consider the $\sigma_k$ and $\sigma_{k+1}.$

\begin{theorem}\label{thm.necessaryLU}
Let $A\in\mathbb{R}^{m\times n}$ and $1\leq k\leq {\rm rank}(A)$. Any partial LU factorization of $A$ in~\cref{eq.rankkLU} that satisfies, for some $\mu,\nu\geq 1$,
\begin{enumerate} 
\item $\sigma_k(A) \leq \mu \sigma_k(A_{11})$, 
\item $\|S(A_{11})\|_2 \leq \mu \sigma_{k+1}(A)$, and 
\item $\|A_{21}A_{11}^{-1}\|_{\max}\leq \nu$ and $\|A_{11}^{-1}A_{12}\|_{\max} \leq \nu$,
\end{enumerate} 
must have $A_{11}$ as a near-local maximum volume $k\times k$ submatrix of $A$ with $\gamma \leq \nu^2+\mu^2$.
\end{theorem}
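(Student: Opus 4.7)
The plan is to exploit the explicit formula for the ratio of volumes under a single row/column swap that was already recorded in the paper as~\cref{eq.volinterpreiterate}, namely
\[
\frac{\vol(\hat{A}_{11})}{\vol(A_{11})} = \left| (A_{11}^{-1}A_{12})_{st} (A_{21}A_{11}^{-1})_{ji} + (A_{11}^{-1})_{si} (S(A_{11}))_{jt} \right|,
\]
when $\hat{A}_{11}$ differs from $A_{11}$ by swapping row $i$ with row $k+j$ and column $s$ with column $k+t$. This reduces the local-maximum-volume condition to bounding the two terms on the right.

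First I would bound each term separately. The first term is controlled directly by hypothesis (3): $|(A_{11}^{-1}A_{12})_{st}| \leq \nu$ and $|(A_{21}A_{11}^{-1})_{ji}| \leq \nu$, so the product is at most $\nu^2$. For the second term, I use the standard bound of a single entry by the spectral norm to write
\[
|(A_{11}^{-1})_{si}| \leq \|A_{11}^{-1}\|_2 = \frac{1}{\sigma_k(A_{11})}, \qquad |(S(A_{11}))_{jt}| \leq \|S(A_{11})\|_2.
\]
By hypothesis (1) we have $1/\sigma_k(A_{11}) \leq \mu/\sigma_k(A)$, and by hypothesis (2) we have $\|S(A_{11})\|_2 \leq \mu \sigma_{k+1}(A)$. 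Multiplying gives an upper bound of $\mu^2 \sigma_{k+1}(A)/\sigma_k(A) \leq \mu^2$, since singular values are nonincreasing. Adding the two contributions yields ${\vol(\hat{A}_{11})}/{\vol(A_{11})} \leq \nu^2 + \mu^2$ exactly as claimed.

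Next I would handle the degenerate cases in~\cref{dfn:maxvolgeneral}, where $\hat{A}_{11}$ differs from $A_{11}$ in only one row (or only one column), since~\cref{eq.volinterpreiterate} is stated for a simultaneous row-and-column swap. For a pure column swap, Cramer's rule (exactly as in~\cref{eq:InterpolativeBounds}) gives $\vol(\hat{A}_{11})/\vol(A_{11}) = |(A_{11}^{-1}A_{12})_{st}| \leq \nu$, and symmetrically for a pure row swap using $A_{21}A_{11}^{-1}$. Since $\mu \geq 1$ and $\nu \geq 1$ imply $\nu \leq \nu^2 + \mu^2$, these single-swap cases are also controlled by the same bound. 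The no-swap case is trivial ($\hat{A}_{11} = A_{11}$, ratio equals $1$).

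The argument is essentially a one-shot application of~\cref{eq.volinterpreiterate} together with the three hypotheses, so there is no substantial obstacle; the only subtlety worth being careful about is making sure every kind of allowable swap in the definition of near-local maximum volume is covered (double, single-row, single-column, none), and verifying the constants $\mu, \nu \geq 1$ make the single-swap bounds absorb into $\nu^2 + \mu^2$. Once that bookkeeping is done, the $\gamma \leq \nu^2 + \mu^2$ statement follows directly from~\cref{eq:NearLocalMaxVol}.
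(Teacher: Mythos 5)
Your proof is correct and takes essentially the same route as the paper: apply the explicit ratio formula~\cref{eq.volinterpreiterate}, bound the first summand by $\nu^2$ via the interpolative bounds, bound the second by $\mu^2$ via $\lvert (A_{11}^{-1})_{si}\rvert \leq 1/\sigma_k(A_{11})$, $\lvert (S(A_{11}))_{jt}\rvert \leq \|S(A_{11})\|_2$, and the singular-value hypotheses, then add. You are slightly more careful than the paper in explicitly checking the degenerate single-row, single-column, and no-swap cases (the paper subsumes these under ``any interchanges'' without comment), but this is a minor tightening of the identical argument.
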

\begin{proof}
In~\cref{eq.volinterpreiterate}, there is a formula for ${\rm vol}(\hat{A}_{11})/{\rm vol}(A_{11})$, where $\hat{A}_{11}$ is obtained from $A_{11}$ by interchanging rows $i$ and $k+j$ and columns $s$ and $k+t$. Applying the triangle inequality to that formula, we find that for any interchanges we have
\[
\left|\frac{\vol(\hat{A}_{11})}{\vol(A_{11})}\right| \leq \nu^2 + \|A_{11}^{-1}\|_{\max}\|S(A_{11})\|_{\max}
\leq \nu^2 + \frac{\|S(A_{11})\|_2}{\sigma_k(A_{11})},
\]
where we used that $\|B\|_{\max}\leq \|B\|_2$ for any matrix $B$ and $\|A_{11}^{-1}\|_2 = 1/\sigma_{k}(A_{11})$. The result follows as 
\[
\frac{\|S(A_{11})\|_2}{\sigma_k(A_{11})} \leq \frac{\mu\sigma_{k+1}(A)}{\sigma_k(A)/\mu} = \mu^2\frac{\sigma_{k+1}(A)}{\sigma_k(A)}\leq \mu^2. 
\]
\end{proof}

One may wonder if the factor of $ \nu^2 + \mu^2$ in~\cref{thm.necessaryLU} can be improved. The following two examples show that it cannot be improved to anything $<\max(\nu^2,\mu^2)$. 
\begin{example} 
Fix $k = 2$ and consider the following $4\times 4$ matrix:
\[
	A = \left[
	\begin{array}{cc|cc}
		1  & 0 & 0  & 0\\
    		0  & \mu^{-1}  & 0  & 0\\
		  \hline
    		0  & 0  & \mu  & 0\\
    		0  & 0  & 0  & 1
	\end{array}
	\right], \qquad \mu>1.
\]
Here, $A_{11}$ is the principal $2\times 2$ block. It is simple to check that the three conditions of~\cref{thm.necessaryLU} are satisfied with $\nu=0$: (1) $\sigma_2(A) = 1 \leq \mu \mu^{-1} = \mu\sigma_2(A_{11})$, (2) $\|S(A_{11})\|_2= \mu\leq \mu = \mu\sigma_3(A)$, and (3) $\|A_{21}A_{11}^{-1}\|_{\max}= \|A_{11}^{-1}A_{12}\|_{\max}=0$. However, the volume of $A([1,2],[1,2])$ is $\mu^{-1}$ while the volume of $A([1,3],[1,3])$ is $\mu$.   Therefore, $A_{11}$ is only a near-local maximum volume pivot with $\gamma = \mu^2$. We conclude that~\cref{thm.necessaryLU} does not hold if the factor $\nu^2+\mu^2$ is reduced to $<\mu^2$.
\label{ex:FirstOne}
\end{example} 

A different example shows that the factor of $\nu^2+\mu^2$ in~\cref{thm.necessaryLU} cannot be reduced to $<\nu^2$. 

\begin{example} 
Fix $k = 3$ and consider the following $5\times 5$ matrix:
\[
	A = \left[
	\begin{array}{ccc|cc}
		1  & 0 & 0  & 1 & 0 \\
		0 & 1 & 0 & \nu & 0 \\ 
		0 & 0 & 1 & 0 & 0 \\
		\hline
		-\nu & 1 & 0 & 0 & 0 \\
		0 & 0 & 0 & 0 & 1
	\end{array}
	\right], \qquad \nu>1.
\]
Here, $A_{11}$ is the principal $3\times 3$ identity block. Since the $4 \times 4$ identity matrix is a submatrix of $A$, we must have $\sigma_3(A)\geq 1$ and $\sigma_4(A) \geq 1$. On the other hand, 
\[
	A_2 = \left[
	\begin{array}{ccc|cc}
		1  & 0 & 0  & 1 & 0 \\
		0 & 1 & 0 & \nu & 0 \\ 
		0 & 0 & 0 & 0 & 0 \\
		\hline
		-\nu & 1 & 0 & 0 & 0 \\
		0 & 0 & 0 & 0 & 0
	\end{array}
	\right]
\]
is a rank-$2$ matrix with $\|A - A_2\|_2 = 1$. Hence, we have $\sigma_3(A) = \sigma_4(A) = 1$. The three conditions of~\cref{thm.necessaryLU} are satisfied with $\mu=1$: (1) $\sigma_3(A) = 1 \leq 1 = \sigma_3(A_{11})$, (2) $S(A_{11})$ is the $2\times 2$ identity so $\|S(A_{11})\|_2 = 1 \leq 1 = \sigma_4(A)$, and (3) $\|A_{21}A_{11}^{-1}\|_{\max}= \|A_{11}^{-1}A_{12}\|_{\max}=\nu$. However, the volume of $A([1,2,3],[1,2,3])$ is $1$ while the volume of $A([2,3,4],[1,3,4])$ is $\nu^2$.   Therefore, $A_{11}$ is only a near-local maximum volume pivot with $\gamma = \nu^2$ and we find that~\cref{thm.necessaryLU} does not hold if the factor $\nu^2+\mu^2$ is reduced to $<\nu^2$.
\label{ex:AnotherOne}
\end{example} 

\Cref{ex:FirstOne,ex:AnotherOne} together show that one cannot reduce the factor of $\nu^2+\mu^2$ in~\cref{thm.necessaryLU} to $<\max(\nu^2,\mu^2)$. So, while near-local maximum volume pivoting is both necessary and sufficient for GE, there is a small gap in terms of constants. That is, a partial LU factorization can give singular value estimates of $\sigma_k(A)$ and $\sigma_{k+1}(A)$ to within a factor of $\mu$ as well as interpolation bounds of $\nu$, but only find a near-local maximum submatrix with $\gamma = \max(\nu^2,\mu^2)$.  

As written,~\cref{thm.necessaryLU} is not explicitly assuming that the partial LU factorization is estimating the singular values of $A$ with $A_k$, as the statement involves $A_{11}$. However, we have $\|S(A_{11})\|_2 = \|A-A_k\|_2$ so statement 2 in~\cref{thm.necessaryLU} is an inequality found in~\cref{eq:GoodTrailingSV} with $j=1$.  From~\cref{eq:betweenA11Ak} and the norm equivalence between $\|\cdot\|_2$ and $\|\cdot\|_{\max}$, we also find that
\[
\begin{aligned} 
\sigma_k(A_k)&\leq (1 + \sqrt{k(m-k)}\|A_{21}A_{11}^{-1}\|_{\max})(1 + \sqrt{k(n-k)}\|A_{11}^{-1}A_{12}\|_{\max})\sigma_k(A_{11})\\
&\leq (1 + \sqrt{k(m-k)}\nu)(1 + \sqrt{k(n-k)}\nu)\sigma_k(A_{11}),\\
\end{aligned}
\]
which is an inequality in~\cref{eq:GoodLeadingSV} with $j=k$ after noting that $\sigma_{k}(A_{11})\leq \sigma_{k}(A)$. Hence, a consequence of~\cref{thm.necessaryLU} is the following: 
\begin{corollary}
Let $A\in\mathbb{R}^{m\times n}$ and $1\leq k\leq {\rm rank}(A)$. Any partial LU factorization of $A$ in~\cref{eq.rankkLU} that constructs a rank $k$ approximation $A_k$ such that
\begin{enumerate} 
\item the inequalities~\cref{eq:GoodLeadingSV} and~\cref{eq:GoodTrailingSV} hold for some $\mu_{m,n,k}\geq 1$, and
\item $\|A_{21}A_{11}^{-1}\|_{\max}\leq \nu$ and $\|A_{11}^{-1}A_{12}\|_{\max} \leq \nu$ for some $\nu\geq 1$,
\end{enumerate} 
must have found a near-local maximum volume $k\times k$ submatrix of $A$ with 
\[
\gamma \leq \nu^2 + \mu_{m,n,k}^2(1 + \sqrt{k(m-k)}\nu)(1 + \sqrt{k(n-k)}\nu). 
\]
\label{cor:GEfinal} 
\end{corollary}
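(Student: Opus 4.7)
The plan is to reduce the corollary directly to Theorem \ref{thm.necessaryLU} by translating the $A_k$-based hypotheses of the corollary into the $A_{11}$-based hypotheses of the theorem. Since hypothesis 3 of the theorem is literally hypothesis 2 of the corollary, the work is to produce the two singular-value comparisons that the theorem requires.

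First I would establish the analog of hypothesis 2 of Theorem \ref{thm.necessaryLU}. Because the partial LU factorization \cref{eq.rankkLU} gives $P_1(A-A_k)P_2 = \mathrm{diag}(0,S(A_{11}))$, we have $\|S(A_{11})\|_2 = \|A-A_k\|_2 = \sigma_1(A-A_k)$, and \cref{eq:GoodTrailingSV} at $j=1$ yields $\|S(A_{11})\|_2 \leq \mu_{m,n,k}\sigma_{k+1}(A)$. Next, for the analog of hypothesis 1, I would combine \cref{eq:GoodLeadingSV} at $j=k$ with the bound \cref{eq:betweenA11Ak} that compares the singular values of $A_k$ to those of $A_{11}$. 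Applying the norm equivalence $\|M\|_2\leq \sqrt{pq}\|M\|_{\max}$ to the matrices $A_{21}A_{11}^{-1}\in\mathbb{R}^{(m-k)\times k}$ and $A_{11}^{-1}A_{12}\in\mathbb{R}^{k\times (n-k)}$ and invoking the interpolative bound $\nu$ gives
\[
\sigma_k(A_k) \leq \left(1+\sqrt{k(m-k)}\,\nu\right)\!\left(1+\sqrt{k(n-k)}\,\nu\right)\sigma_k(A_{11}),
\]
and then $\sigma_k(A)\leq \mu_{m,n,k}\sigma_k(A_k)$ from \cref{eq:GoodLeadingSV} produces the needed inequality with multiplier $\mu_{m,n,k}(1+\sqrt{k(m-k)}\nu)(1+\sqrt{k(n-k)}\nu)$.

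Rather than apply Theorem \ref{thm.necessaryLU} as a black box (which would force me to take a single $\mu$ equal to the larger of the two multipliers, losing a factor), I would re-use its proof: the key one-line bound
\[
\left|\frac{\vol(\hat A_{11})}{\vol(A_{11})}\right| \leq \nu^2 + \frac{\|S(A_{11})\|_2}{\sigma_k(A_{11})}
\]
holds with the two multipliers appearing multiplicatively in the second term. Plugging in both inequalities above and using $\sigma_{k+1}(A)\leq\sigma_k(A)$ collapses the second term to $\mu_{m,n,k}^2(1+\sqrt{k(m-k)}\nu)(1+\sqrt{k(n-k)}\nu)$, which is exactly the $\gamma$ claimed. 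Taking the supremum over all neighboring $\hat A_{11}$ concludes that $A_{11}$ is a $\gamma$-local maximum volume submatrix in the sense of \cref{eq:NearLocalMaxVol}.

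I do not expect a genuine obstacle here: every ingredient (the Cramer's-rule identity \cref{eq.volinterpreiterate}, the low-rank comparison \cref{eq:betweenA11Ak}, and the norm equivalences) is already available in the paper. The only subtlety worth flagging is the one just noted: to get the stated bound rather than a weaker $\nu^2+\max(\mu_1,\mu_2)^2$-type expression, one must exploit that the proof of Theorem \ref{thm.necessaryLU} only uses the \emph{product} of the two singular-value multipliers through the ratio $\|S(A_{11})\|_2/\sigma_k(A_{11})$, so the two inequalities should be inserted separately instead of being merged into a single $\mu$.
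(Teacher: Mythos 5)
Your proposal matches the paper's argument: the paper likewise passes from the $A_k$-hypotheses to the $A_{11}$-hypotheses via \cref{eq:betweenA11Ak} together with the norm equivalences, and then invokes the machinery of \cref{thm.necessaryLU}. The subtlety you flag is real and worth noting---the stated $\gamma$ only emerges if the two different singular-value multipliers are threaded separately through the ratio $\|S(A_{11})\|_2/\sigma_k(A_{11})$ inside the proof of \cref{thm.necessaryLU} rather than merged into a single $\mu$ (which would square the $(1+\sqrt{k(m-k)}\nu)(1+\sqrt{k(n-k)}\nu)$ factor), a detail the paper leaves implicit when it calls the corollary ``a consequence'' of the theorem.
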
 

\Cref{cor:GEfinal} makes near-local maximum volume pivoting the archetypal pivoting strategy for rank-revealers based on GE.  It is worth noticing that the proof of~\cref{cor:GEfinal} is not requiring that all the singular value estimates in~\cref{eq:GoodLeadingSV,eq:GoodTrailingSV} hold. The proof only needs two of them! Therefore, if a partial LU factorization computes a rank $k$ approximation $A_k$ with: (i) $\sigma_k(A_k)\approx \sigma_k(A)$, (ii) $\|A-A_k\|_2\approx \sigma_{k+1}(A)$, and (iii) small interpolative bounds, then it must have identified a near-local maximum volume $k\times k$ submatrix of $A$.  Since a near-local maximum volume pivot has been found, the partial LU factorization will be satisfying all the inequalities in~\cref{eq:GoodLeadingSV,eq:GoodTrailingSV} with a relatively small $\mu_{m,n,k}$.

\subsection{All QR rank-revealing factorizations find a near-local maximum volume submatrix}
\label{sec:QR2}
We now consider a partial QR factorization of $A$ given in~\cref{eq.rankkQR}. We show that if (1) $\sigma_k(R_{11})\approx \sigma_k(A)$, (2) $\|R_{22}\|_2\approx \sigma_{k+1}(A)$, and (3) interpolative bounds hold (see~\cref{def:InterpolativeBoundsQR}), then the first $k$ columns of $AP$ are a near-local maximum volume $m\times k$ submatrix of $A$. 

\begin{theorem}\label{thm.necessaryQR2}
Let $A\in\mathbb{R}^{m\times n}$ and $1\leq k\leq {\rm rank}(A)$. Any partial QR factorization of $A$ in~\cref{eq.rankkQR} that satisfies, for some $\mu,\nu\geq 1$,
\begin{enumerate} 
\item $\sigma_k(A) \leq \mu \sigma_k(R_{11})$,
\item $\|R_{22}\|_2\leq \mu\sigma_{k+1}(A)$, and
\item $\|R_{11}^{-1}R_{12}\|_{\max}\leq \nu$,
\end{enumerate} 
must have the first $k$ columns of $AP$ as a near-local maximum volume $m\times k$ submatrix of $A$ with $\gamma\leq \sqrt{\nu^2 + \mu^4}$. 
\end{theorem}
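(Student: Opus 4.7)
The plan is to mirror the structure of the proof of \cref{thm.necessaryLU}, replacing the volume-ratio identity \cref{eq.volinterpreiterate} used in the GE setting with the QR counterpart \cref{eq:UpdateQR}. Let $B$ denote the first $k$ columns of $AP$. Any $m\times k$ submatrix $\hat{B}$ of $A$ differing from $B$ in at most one column can be realized as swapping some column $i$ of $B$ with some column $k+j$ of $AP$, and \cref{eq:UpdateQR} gives
\[
\frac{\vol(\hat{B})}{\vol(B)} = \sqrt{\bigl(R_{11}^{-1}R_{12}\bigr)_{ij}^{2} + \bigl((R_{11}^\top R_{11})^{-1}\bigr)_{ii}\bigl(R_{22}^\top R_{22}\bigr)_{jj}}.
\]
The strategy is to bound each of the two terms under the radical using assumptions (1)--(3) of the theorem and then combine them.

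The first term is immediate from assumption (3): $(R_{11}^{-1}R_{12})_{ij}^{2} \leq \nu^{2}$. For the second term, I would use the standard fact that a diagonal entry of a positive semidefinite matrix is bounded by its $2$-norm. Thus $((R_{11}^\top R_{11})^{-1})_{ii}\leq \|R_{11}^{-1}\|_2^{2} = 1/\sigma_k(R_{11})^{2}$ and $(R_{22}^\top R_{22})_{jj}\leq \|R_{22}\|_2^{2}$, which gives
\[
\bigl((R_{11}^\top R_{11})^{-1}\bigr)_{ii}\bigl(R_{22}^\top R_{22}\bigr)_{jj} \leq \frac{\|R_{22}\|_2^{2}}{\sigma_k(R_{11})^{2}}.
\]
Now assumption (2) gives $\|R_{22}\|_2 \leq \mu\sigma_{k+1}(A)$, and assumption (1) gives $1/\sigma_k(R_{11}) \leq \mu/\sigma_k(A)$; multiplying and using $\sigma_{k+1}(A)\leq \sigma_k(A)$ yields $\|R_{22}\|_2^{2}/\sigma_k(R_{11})^{2}\leq \mu^{4}$.

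Substituting these two bounds into the ratio formula gives $\vol(\hat{B})/\vol(B) \leq \sqrt{\nu^{2}+\mu^{4}}$ for every column-swap neighbor $\hat{B}$ of $B$, which is precisely the $\gamma$-local maximum volume condition \cref{eq:NearLocalMaxVol} with $\gamma = \sqrt{\nu^{2}+\mu^{4}}$. There is no real obstacle here: unlike the GE case there are no row swaps to consider (the submatrices have $m$ rows), so only the single column-swap formula \cref{eq:UpdateQR} is needed, and the reduction to scalar bounds is routine. If one wanted, one could also record a QR analogue of \cref{cor:GEfinal} by noting that $\sigma_k(R_{11})$ and $\|R_{22}\|_2$ are closely tied to $\sigma_k(A_k)$ and $\|A-A_k\|_2$ via $R_{11}$ and the identity \cref{eq:PartialCholesky}.
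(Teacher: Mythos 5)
Your proof is correct and follows essentially the same route as the paper: it uses the volume-ratio identity \cref{eq:UpdateQR}, bounds the first term under the radical by $\nu^2$ via assumption (3), bounds the second term by $\mu^4$ via assumptions (1) and (2) together with $\sigma_{k+1}(A)\leq\sigma_k(A)$, and combines. The only cosmetic difference is that you bound the diagonal entries of the positive semidefinite matrices directly by their $2$-norms, whereas the paper inserts the $\max$-norm as an intermediary; the resulting inequalities are identical.
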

\begin{proof}
Let $B$ be the $m\times k$ submatrix of $A$ formed by taking the first $k$ columns of $AP$. In~\cref{eq:UpdateQR}, there is a formula for ${\rm vol}(\hat{B})/{\rm vol}(B)$, where $\hat{B}$ is obtained from $B$ by interchanging columns $i$ and $k+j$. By the triangle inequality, we have 
\[
\left(\frac{{\rm vol}(\hat{B})}{{\rm vol}(B)}\right)^2 \leq \nu^2 + \| (R_{11}^\top R_{11})^{-1}\|_{\max} \|R_{22}^\top R_{22}\|_{\max}\leq \nu^2 + \frac{\|R_{22}\|_2^2}{\sigma_k^2(R_{11})},
\]
where we have used $\|(R_{11}^\top R_{11})^{-1}\|_{\max}\leq \|(R_{11}^\top R_{11})^{-1}\|_2$, $\|R_{22}^\top R_{22}\|_{\max}\leq \|R_{22}^\top R_{22}\|_2$, $\|R_{22}^\top R_{22}\|_2 = \|R_{22}\|_2^2$, and $\| (R_{11}^\top R_{11})^{-1}\|_{2} = 1/\sigma_k^2(R_{11})$. Since
\[
\frac{\|R_{22}\|_2^2}{\sigma_k^2(R_{11})}\leq \frac{\mu^2 \sigma_{k+1}^2(A)}{\sigma_k^2(A)/\mu^2} = \mu^4\frac{\sigma_{k+1}^2(A)}{\sigma_k^2(A)}\leq \mu^4,
\]
we have $({\rm vol}(\hat{B})/{\rm vol}(B))^2\leq \nu^2+\mu^4$. The result follows as this argument applies to any $\hat{B}$ obtained from $B$ by interchanging columns $i$ and $k+j$. 
\end{proof}

We can easily modify~\cref{thm.necessaryQR2} to be about the constructed rank $k$ approximation of $A$. First, we note that $\|R_{22}\|_2 = \|A-A_k\|_2$ so statement 2 in~\cref{thm.necessaryQR2} is an inequality found in~\cref{eq:GoodTrailingSV} with $j=1$.  Since 
\[
A_k P = Q_1\begin{bmatrix}R_{11} & R_{12} \end{bmatrix} = Q_1R_{11}\begin{bmatrix}I_{k,k} & R_{11}^{-1}R_{12} \end{bmatrix} 
\]
and the norm equivalence between $\|\cdot\|_2$ and $\|\cdot\|_{\max}$, we find that
\[
\begin{aligned} 
\sigma_k(A_k)&\leq (1 + \sqrt{k(n-k)}\|R_{11}^{-1}R_{12}\|_{\max})\sigma_k(R_{11})\leq (1 + \sqrt{k(n-k)}\nu)\sigma_k(R_{11}),\\
\end{aligned}
\]
which is an inequality in~\cref{eq:GoodLeadingSV} with $j=k$ after noting that $\sigma_k(R_{11})\leq \sigma_k(A)$. Hence, a consequence of~\cref{thm.necessaryQR2} is the following: 
\begin{corollary}
Let $A\in\mathbb{R}^{m\times n}$ and $1\leq k\leq {\rm rank}(A)$. Any partial QR factorization of $A$ in~\cref{eq.rankkQR} that constructs a rank $k$ approximation $A_k$ such that
\begin{enumerate} 
\item the inequalities~\cref{eq:GoodLeadingSV} and~\cref{eq:GoodTrailingSV} hold for some $\mu_{m,n,k}\geq 1$, and
\item $\|R_{11}^{-1}R_{12}\|_{\max} \leq \nu$ for some $\nu\geq 1$,
\end{enumerate} 
must have found a near-local maximum volume $m\times k$ submatrix of $A$ with 
\[
\gamma \leq \sqrt{\nu^2 + (1 + \sqrt{k(n-k)}\nu)^2\mu_{m,n,k}^4}. 
\]
\label{cor:QRfinal} 
\end{corollary}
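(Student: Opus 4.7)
The plan is to reduce the corollary to Theorem~\ref{thm.necessaryQR2} by identifying suitable values of the two singular-value parameters from the hypothesis. The key observation is that the two inequalities (1) and (2) required by Theorem~\ref{thm.necessaryQR2} will come with different constants here, and tracking them separately (rather than taking their maximum) is what produces the sharper bound stated in the corollary.

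First I would establish condition (2) of Theorem~\ref{thm.necessaryQR2}. The relation $\|R_{22}\|_2=\|A-A_k\|_2$ combined with the $j=1$ case of~\cref{eq:GoodTrailingSV} yields $\|R_{22}\|_2 \leq \mu_{m,n,k}\,\sigma_{k+1}(A)$, so condition (2) holds with $\mu_2 := \mu_{m,n,k}$. Second, I would establish condition (1). Writing $A_k P = Q_1 R_{11}\bigl[I_{k,k},\ R_{11}^{-1}R_{12}\bigr]$ and using the norm inequality $\|B\|_2 \leq \sqrt{k(n-k)}\,\|B\|_{\max}$ on the interpolative block gives $\sigma_k(A_k) \leq \bigl(1+\sqrt{k(n-k)}\,\nu\bigr)\sigma_k(R_{11})$. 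Combining this with $\sigma_k(A)\leq \mu_{m,n,k}\,\sigma_k(A_k)$ from the $j=k$ case of~\cref{eq:GoodLeadingSV} shows that condition (1) holds with $\mu_1 := \mu_{m,n,k}\bigl(1+\sqrt{k(n-k)}\,\nu\bigr)$.

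At this stage I would not invoke Theorem~\ref{thm.necessaryQR2} as a black box with $\mu=\max(\mu_1,\mu_2)$, because that would cost an unnecessary factor of $(1+\sqrt{k(n-k)}\,\nu)^2$ in the final estimate. Instead I would retrace the short proof of Theorem~\ref{thm.necessaryQR2} and keep the two parameters distinct. Let $B$ denote the first $k$ columns of $AP$ and let $\hat B$ be any submatrix obtained from $B$ by swapping one column. By the update formula~\cref{eq:UpdateQR} and the triangle inequality,
\[
\left(\frac{\vol(\hat B)}{\vol(B)}\right)^2 \leq \nu^2 + \frac{\|R_{22}\|_2^2}{\sigma_k(R_{11})^2} \leq \nu^2 + \mu_1^2\mu_2^2\,\frac{\sigma_{k+1}(A)^2}{\sigma_k(A)^2} \leq \nu^2 + \mu_1^2\mu_2^2.
\]
Substituting the values of $\mu_1$ and $\mu_2$ gives $\gamma \leq \sqrt{\nu^2 + (1+\sqrt{k(n-k)}\,\nu)^2\mu_{m,n,k}^4}$ as claimed.

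The main conceptual obstacle is recognizing that the two hypotheses of Theorem~\ref{thm.necessaryQR2} are \emph{asymmetric}: the leading-singular-value bound in~\cref{eq:GoodLeadingSV} provides control of $\sigma_k(A_k)$ rather than $\sigma_k(R_{11})$, and converting between them costs a factor depending on $\nu$, whereas the trailing-singular-value bound relates $\|R_{22}\|_2$ to $\sigma_{k+1}(A)$ directly with no $\nu$-dependent inflation. Keeping $\mu_1$ and $\mu_2$ separate throughout the product $\mu_1^2\mu_2^2$ is therefore essential; beyond this accounting the argument is routine.
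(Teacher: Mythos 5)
Your proof is correct and follows essentially the same route as the paper: reduce to the proof of Theorem~\ref{thm.necessaryQR2} by verifying its three hypotheses, using $\|R_{22}\|_2=\|A-A_k\|_2$ together with~\cref{eq:GoodTrailingSV} at $j=1$, and $\sigma_k(A_k)\leq(1+\sqrt{k(n-k)}\nu)\sigma_k(R_{11})$ together with~\cref{eq:GoodLeadingSV} at $j=k$. Your explicit tracking of the two distinct constants $\mu_1$ and $\mu_2$ through the final estimate is exactly the bookkeeping needed to justify the power $(1+\sqrt{k(n-k)}\nu)^2$ rather than the weaker $(1+\sqrt{k(n-k)}\nu)^4$ that a black-box application of Theorem~\ref{thm.necessaryQR2} with $\mu=\max(\mu_1,\mu_2)$ would give; the paper leaves this step implicit, so your write-up makes the derivation of the stated constant slightly more transparent than the original.
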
 

\Cref{cor:QRfinal} makes near-local maximum volume pivoting the canonical pivoting strategy for rank-revealers based on QR.  The proof of~\cref{cor:QRfinal} does not even need all the singular value estimates in~\cref{eq:GoodLeadingSV,eq:GoodTrailingSV} to be satisfied. In fact, any partial QR factorization that computes a rank $k$ approximation $A_k$ with: (i) $\sigma_k(A_k)\approx \sigma_k(A)$, (ii) $\|A-A_k\|_2\approx \sigma_{k+1}(A)$, and (iii) a small interpolative bound, must have identified a near-local maximum volume $m\times k$ submatrix of $A$.  Since a near-local maximum volume pivot has been found by such a partial QR factorization, the computed $A_k$ will satisfy all the inequalities in~\cref{eq:GoodLeadingSV,eq:GoodTrailingSV} for some $\mu_{m,n,k}$.

\section{Using local maximum volume to assess the success of any pivoting strategy}\label{sec:assessment} 
Since we know the near-local maximum volume is necessary and sufficient for a rank-revealer based on GE and QR, we can develop a metric to assess the success of any pivoting strategy. 

Suppose that a pivoting strategy has selected a submatrix $B$ from $A$. We can measure the success of the pivoting strategy on $A$ by computing the following: 
\begin{equation} 
\mu_B = \max\left\{\max_{\hat{B}\in\mathcal{S}_B} \frac{{\rm vol}(\hat{B})}{{\rm vol}(B)},1\right\},
\label{eq:metric} 
\end{equation} 
where $\mathcal{S}_B$ is the set of all submatrices of $A$ that are neighbors of $B$ in the volume submatrix graph to $B$ (see~\cref{sec:volumegraph}). If $\mu_B=1$ or a small constant, then the pivoting strategy found a local maximum volume pivot and GE and QR are rank-revealers (see~\cref{thm.sufficientLU,thm.sufficientQR}). However,  when $\mu_B$ is large, the pivoting strategy failed to discover a near-local maximum volume pivot and GE and QR must either not be a rank-revealer or must give large interpolative bounds (see~\cref{thm.necessaryLU,thm.necessaryQR2}). In particular, when $\mu_B$ is large, GE and QR will either provide poor singular values estimates of $A$ or the partial factorization will have large interpolative bounds. Therefore, $\mu_B$ in~\cref{eq:metric} can be regarded as a quantity to judge the quality of any pivot. The closer $\mu_B$ is to the value of $1$, the better. In the GitHub repository accompanying the manuscript~\cite{Github}, we provide efficient MATLAB code to compute $\mu_B$ for QR pivots and estimate it for GE pivots. 

After one has computed $\mu_B$, it can immediately use it is to assess the quality of the rank-revealer. For GE, one has 
\[
\frac{1}{1+5\mu_B^2k\sqrt{mn}}\sigma_j(A)\leq \sigma_j(A_k)\leq (1+5\mu_B^2k\sqrt{mn})\sigma_j(A), \qquad 1\leq j\leq k, 
\]
and 
\[
\sigma_{k+j}(A)\leq \sigma_j(A-A_k)\leq (1+5\mu_B^2k\sqrt{mn})\sigma_{k+j}(A), \qquad 1\leq j\leq \min(m,n)-k.
\]
For QR, one has 
\[
\frac{1}{\sqrt{1+5\mu_B^2kn}}\sigma_j(A)\leq \sigma_j(A_k)\leq \sqrt{1+5\mu_B^2kn}\,\sigma_j(A), \qquad 1\leq j\leq k, 
\]
and 
\[
\sigma_{k+j}(A)\leq \sigma_j(A-A_k)\leq \sqrt{1+5\mu_B^2kn}\,\sigma_{k+j}(A), \qquad 1\leq j\leq \min(m,n)-k.
\]
If one finds that $\mu_B$ is large, then the selected pivot is not ideal. However, one can start with that submatrix and run~\cref{alg:NearLocalMaxVol} to improve the pivot.

\subsection{Assessing the success of GECP} 
The metric can be used both analytically and numerically. 
One can analytically bound the metric in~\cref{eq:metric} from below on a family of carefully chosen matrices to show that GECP is not always a rank-revealer. To do this, we need to find just one matrix and one value of $k$ such that $\mu_B$ is extremely large, where $B$ is the $k\times k$ submatrix selected by GECP. Consider the $n\times n$ matrix $A = K_n^\top K_n$ with $k = n-1$, where $K_n$ is the $n\times n$ Kahan matrix~\cite{kahan1966numerical}. Here, $K_n$ is given by 
\begin{equation}\label{eq.kahanmat}
	K_n = \begin{bmatrix}
		1 & -s & -s & \cdots & -s \\
		 & c & -c s & \cdots & -c s \\
		 & & c^2 & \cdots & -c^2 s \\
		 &&& \ddots & \vdots \\
		 &&&& c^{n-1}
	\end{bmatrix}, \qquad c^2 + s^2 = 1, \qquad c, s > 0.
\end{equation}
One can show that in exact arithmetic GECP does no pivoting on $A = K_n^\top K_n$ so it  selects the principal $(n-1)\times (n-1)$ submatrix, $B$, as the pivot. Unfortunately, we have~\cite{kahan1966numerical}
\begin{equation} 
\mu_B\geq \frac{\text{vol}(A(2\!:\!n,2\!:\!n))}{\text{vol}(A(1\!:\!(n\!-\!1),1\!:\!(n\!-\!1)))} = \left(\frac{\text{vol}(K_n(:,2\!:\!n))}{\text{vol}(K_n(:,1\!:\!(n-1)))}\right)^2 \geq  s^2 (1+s)^{2(k-1)}.
\label{eq:BoundonFB}
\end{equation} 
Therefore, $\mu_B$ depends exponentially on $k$ and we conclude that GECP is not a rank-revealer as it failed to select a near-local maximum volume pivot on $A$. 

The metric in~\cref{eq:metric} can also be computed numerically to obtain a measure of the quality of a pivot. It is often the case that GECP is used as a rank-revealing factorization in practical settings. To investigate why, we randomly generate 10,000 $50\times 50$ Gaussian matrices with independent and identically distributed entries. For each matrix, we select a $20\times 20$ pivot using GECP and then compute the value of $\mu_B$ (see~\cref{fig:LUexperimentsmetric} (left)). Astonishingly, we never observe a value of $\mu_B$ greater than 2.

\begin{figure} 
\centering
\begin{minipage}{.49\textwidth}
\begin{overpic}[width=\textwidth]{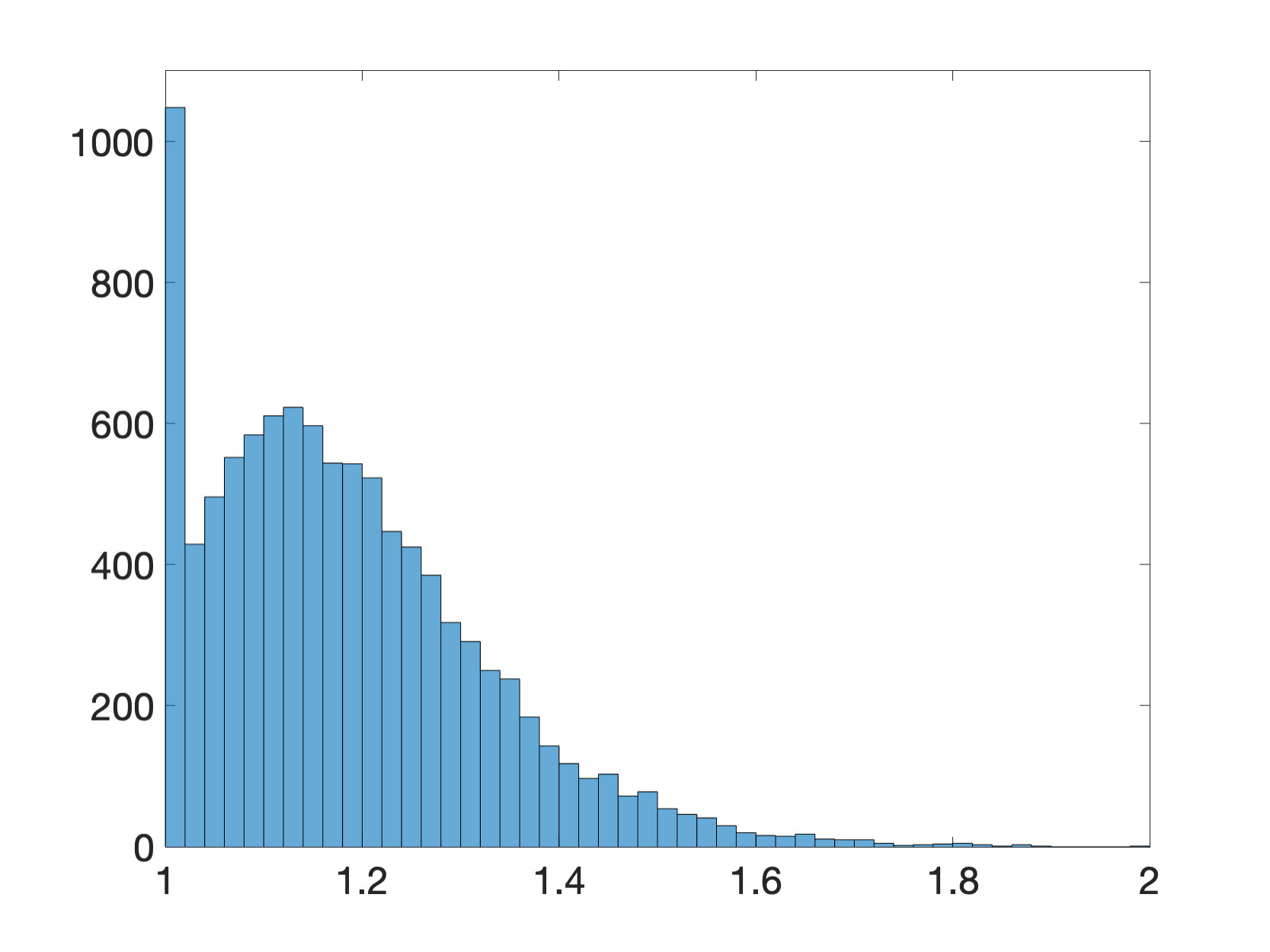}
\put(48,0) {$\mu_B$} 
\put(43,72) {GECP} 
\end{overpic} 
\end{minipage}
\begin{minipage}{.49\textwidth}
\begin{overpic}[width=\textwidth]{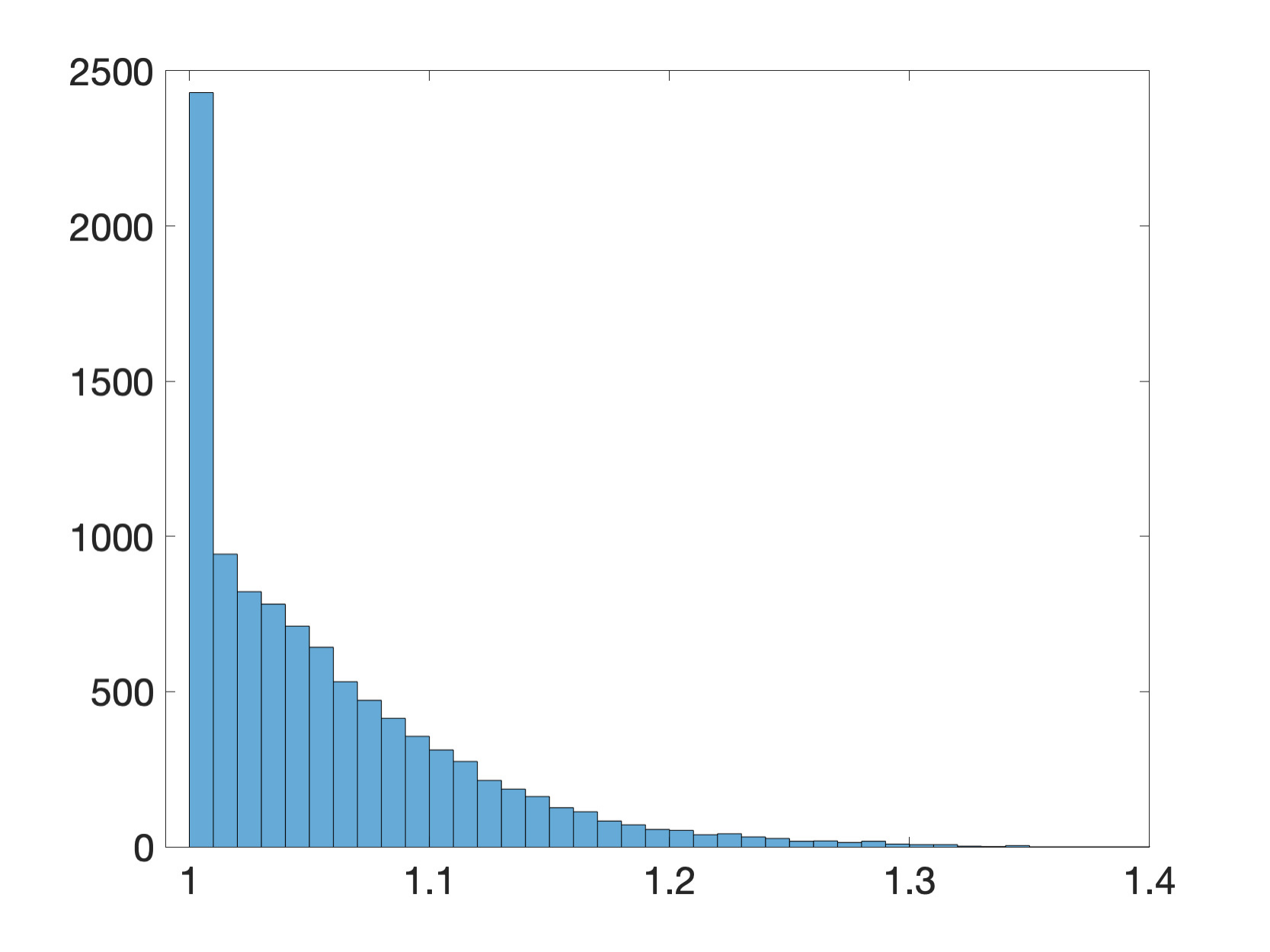}
\put(48,0) {$\mu_B$} 
\put(43,72) {CPQR} 
\end{overpic} 
\end{minipage}
\caption{We observe that GECP (left) and CPQR (right) find near-local maximum volume submatrices with small $\mu_B$ on $50\times 50$ random Gaussian matrices with extremely high probability and $k=20$. Here, we randomly generate 10,000 Gaussian matrices, compute the metric in~\cref{eq:metric}, and plot a histogram. Despite the worse-case bound on $\mu_B$ being exponential in $k$ for GECP and CPQR, most of the time these pivoting strategies find submatrices with small $\mu_B$.}
\label{fig:LUexperimentsmetric} 
\end{figure}

\subsection{Assessing the success of CPQR}\label{sec:CPQRmetric} 
CPQR is not a rank-revealer because it can select a submatrix $B$ with a large $\mu_B$. To see this, consider the $n\times n$ matrix $A = K_n$ with $k=n-1$, where $K_n$ is the Kahan matrix in~\cref{eq.kahanmat}. It can be observed that CPQR does not pivot on $K_n$ so CPQR selects the first $n-1$ columns of $A$. However, from~\cref{eq:BoundonFB}, we see that 
\[
\frac{\text{vol}(K_n(:,2\!:\!n))}{\text{vol}(K_n(:,1\!:\!(n-1)))} \geq  s (1+s)^{k-1},
\]
which is exponential in $k$. 
 
Nevertheless, CPQR is often used in practice as a rank-revealer. To see why, we randomly generate 10,000 $50\times 50$ Gaussian matrices with independent and identically distributed entries. For each matrix, we select a $50\times 20$ pivot using CPQR and then compute the value of $\mu_B$ (see~\cref{fig:LUexperimentsmetric} (right)). Astonishingly, we never observe a value of $\mu_B$ greater than $\sqrt{2}$.

\begin{remark}
We note that these ``worst case'' examples (both for CPQR and GECP) are quite delicate. In particular, because $\sigma_{n-1}(K_n) \geq \sigma_{n-1}(R_{11}) \geq \sigma_{n}(K_n)$ it is only possible to have an exponential gap between $\sigma_{n-1}(R_{11})$ and $\sigma_{n-1}(K_n)$ if $K_n$ itself has an exponential gap between $\sigma_{n-1}(K_n)$ and $\sigma_n(K_n)$. Such a gap is not stable under perturbation.
\end{remark}
 
\section{Applications}\label{sec:applications}
It is useful to see some of the applications of rank-revealers. We look at: (i) Pseudoskeleton approximation of kernels, (ii) Model order reduction, and (iii) Quantum chemistry.  Throughout this section, we will use notation that is more familiar to domain researchers.  

\subsection{Pseudoskeleton approximation of kernels}\label{sec:FunctionApproximation} 
Pseudoskeleton approximation is a technique used to approximate large and typically dense matrices, which often arise in kernel methods~\cite{goreinov1997theory}. It selects a subset of the columns and rows from the original matrix---informally known as a skeleton---to form a low-rank approximation. The pseudoskeleton approximation is equivalent to the low-rank approximation formed by GE, and the task is to find a reasonable subset of the columns and rows. It is common to use either GE with rook pivoting\footnote{GE with rook pivoting selects pivots one at a time so that each pivot is the largest entry in absolute value in both its column and row. GE with rook pivoting is not a rank-revealer as it can fail to find a near-local maximum volume pivot on diagonal matrices.} or GECP to select the columns and rows. The link between pseudoskeleton approximation and GE with local maximum volume pivoting is first mentioned by Bebendorf in 2000~\cite{bebendorf2000approximation}. 

Pseudoskeleton approximation is used for hierarchical low-rank matrix compression~\cite{bebendorf2000approximation,ho2012recursive,minden2017recursive} and function approximation~\cite{townsend2013extension}. In~\cref{fig:SkeletonAiry} we show the skeleton grids selected by GECP for two functions. It is common for the pivots to end up in areas of the function where there is more variation. 

\begin{figure} 
\centering
\begin{minipage}{.49\textwidth} 
\centering
\begin{overpic}[width=.8\textwidth,trim={130 75 130 50},clip]{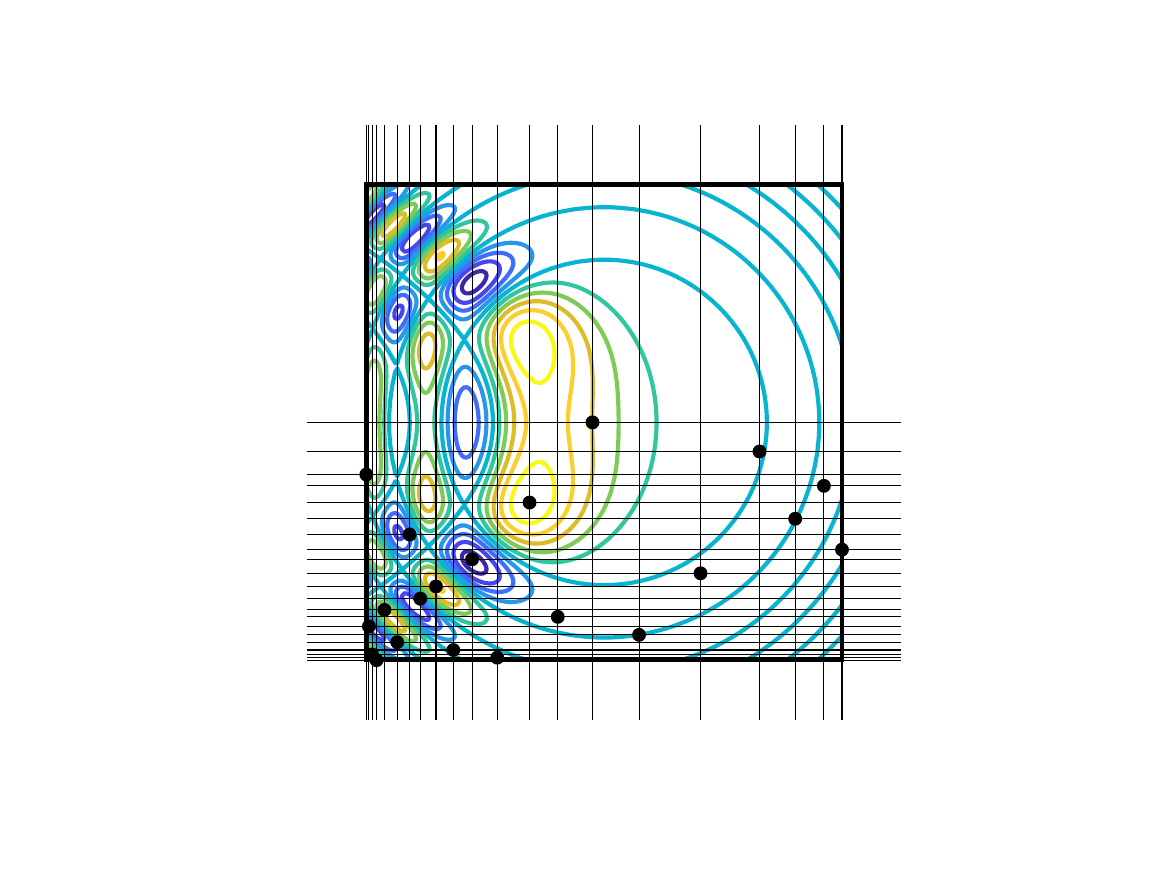}
\end{overpic} 
\end{minipage} 
\begin{minipage}{.49\textwidth} 
\centering
\begin{overpic}[width=.8\textwidth,trim={130 75 130 50},clip]{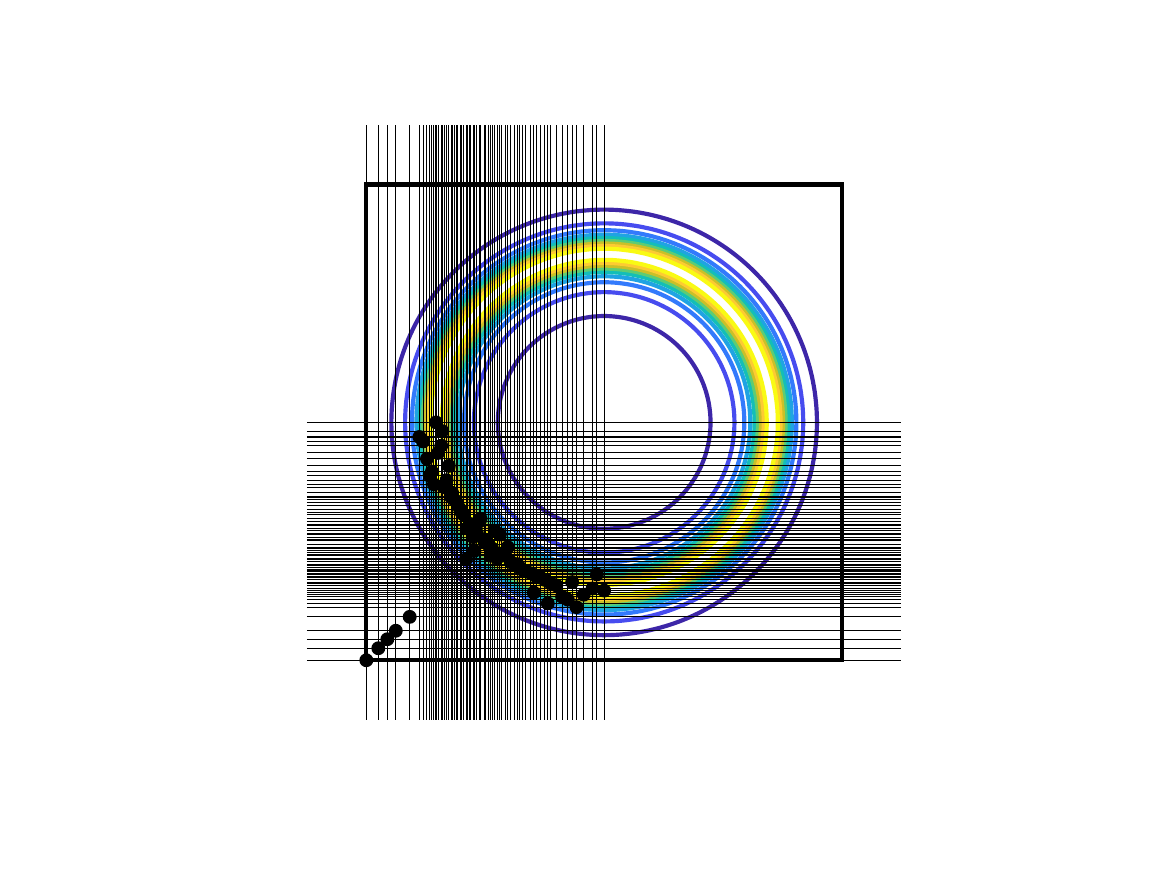}
\end{overpic} 
\end{minipage} 
\caption{Left: The skeleton selected by GECP for the function $f(x,y) = {\rm Ai}(5(x+y^2)){\rm Ai}(-5(x^2+y))$ 
on $[-1,1]\times [-1,1]$ when discretized on a $129\times 129$ bivariate Chebyshev grid and $k=20$. Right: GECP skeleton for the function $f(x,y) = 1/(1+100(1/2-x^2-y^2)^2)$ on $[-1,1]\times [-1,1]$ when discretized on a $513\times 513$ bivariate Chebyshev grid and $k=65$.}
\label{fig:SkeletonAiry} 
\end{figure} 

The reason that GECP is a popular algorithm in pseudoskeleton approximation world is because it is often observed to deliver good singular value estimates and satisfy interpolative bounds. To show this, consider the 2D Runge-like functions given by 
\begin{equation} 
f_\beta(x,y) = \frac{1}{1 + \beta (x^2 + y^2)^2}, \qquad \beta >0,
\label{eq:RungeLikeFunctions} 
\end{equation} 
which is analytic in both variables. \Cref{fig:Skeleton} (left) shows that when $k=5$ the low-rank approximation $A_k$ formed by GECP provides excellent singular value estimates for the leading $k$ singular values. Moreover, $A-A_k$ provides excellent singular value estimates for the trailing singular values.  

We also consider Wendland's compactly-supported radial basis functions~\cite{wendland1995piecewise}
\begin{equation} 
\phi_{3,s}(x,y) = \begin{cases} (1-d)_+^2,& s=0,\\ (1-d)_+^4(4d+1), & s=1,\\ (1-d)_+^8(32d^3+25d^2+8d+1),& s=3,\\\end{cases}
\label{eq:WendlandFunctions} 
\end{equation} 
where $(x)_+$ is the function that is equal to $x$ when $x\geq 0$ and is $0$ when $x<0$ and $d = |x-y|$ is the distance between $x$ and $y$. The function $\phi_{3,s}$ has $2s$ continuous derivatives in both variables. These Wendland functions are discretized on a grid to form a kernel matrix. The smoothness of the functions can be used to explain the observed decay rates of the singular values~\cite[Sec.~3.4]{townsend2014computing}. 
\begin{figure} 
\centering
\begin{minipage}{.49\textwidth} 
\begin{overpic}[width=\textwidth]{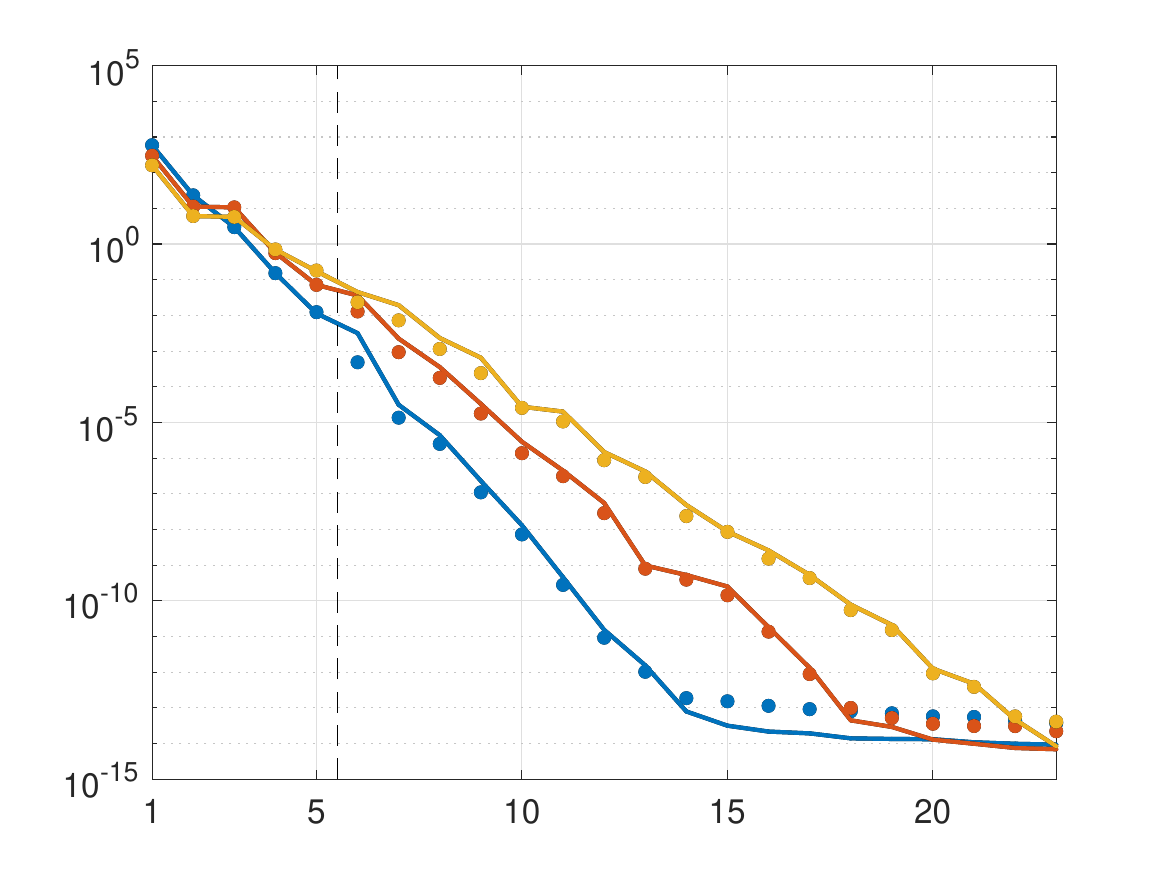}
\put(50,0) {$j$}
\put(18,10) {\rotatebox{90}{$\sigma_j(A)\approx \sigma_j(A_k)$}}
\put(33,58) {$\sigma_j(A-A_k)\approx \sigma_{j+k}(A)$}
\put(67,30) {\rotatebox{-38}{$\beta =100$}}
\put(65,23) {\rotatebox{-45}{$\beta =10$}}
\put(50,24) {\rotatebox{-46}{$\beta =1$}}
\end{overpic} 
\end{minipage} 
\begin{minipage}{.49\textwidth} 
\begin{overpic}[width=\textwidth]{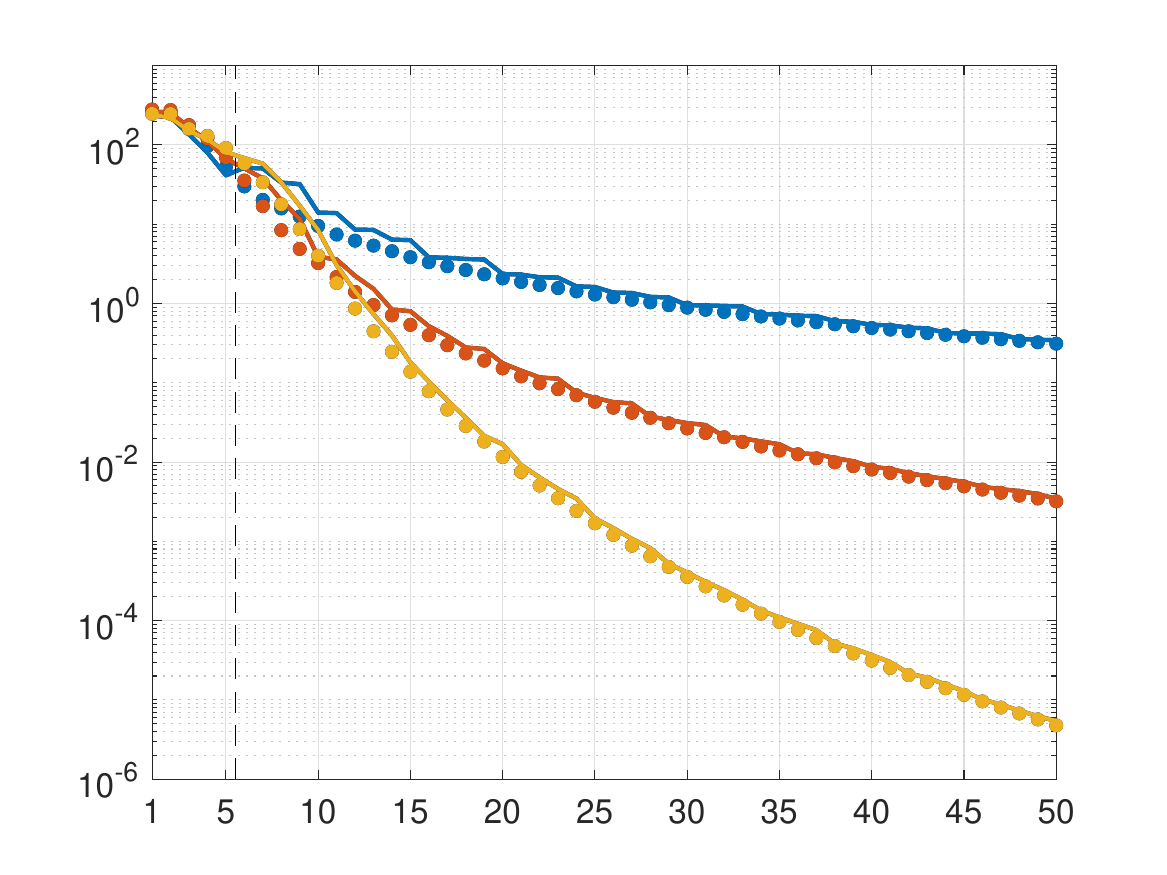}
\put(50,0) {$j$}
\put(14,10) {\rotatebox{90}{$\sigma_j(A)\approx \sigma_j(A_k)$}}
\put(33,58) {$\sigma_j(A-A_k)\approx \sigma_{j+k}(A)$}
\put(67,50) {\rotatebox{-5}{$\phi_{3,0}$}}
\put(67,38) {\rotatebox{-7}{$\phi_{3,1}$}}
\put(67,23) {\rotatebox{-15}{$\phi_{3,3}$}}
\end{overpic} 
\end{minipage} 
\caption{Using GECP to estimate the singular values of a $1000\times 1000$ kernel matrix formed by discretizing functions on $(x,y)\in [-1,1]^2$ with a bivariate Chebyshev grid. The kernel matrices are formed by discretizing the Runge-like functions in~\cref{eq:RungeLikeFunctions} (left) with $\beta = 1$ (blue), $\beta=10$ (red), and $\beta = 100$ (yellow) and Wendland's RBFs in~\cref{eq:WendlandFunctions}. Here, solid dots are the exact singular values of the discretized kernel matrix while the solid lines interpolate the GECP's singular values estimates.}
\label{fig:Skeleton} 
\end{figure} 

When $k = 5$, we find that GECP finds a near-local maximum volume pivot for all the functions in~\cref{eq:RungeLikeFunctions} with $\beta = 1,10,100$ and~\cref{eq:WendlandFunctions} when they are discretized on a $1000\times 1000$ bivariate Chebyshev grid. This appears to be an extremely common feature of GECP for pseudospectral approximation and for these six  examples we have $\mu_B\leq 2$. The two-dimensional version of Chebfun relies on GECP as a rank-revealer for bivariate function approximation~\cite{townsend2013extension}.  If one wants theoretical statements, then one can always use the $k\times k$ pivot selected by GECP as an initial submatrix for~\cref{alg:NearLocalMaxVol}. Then, the quality of the singular value estimates are guaranteed (see~\cref{thm.sufficientLUnear}). 

\subsection{Model order reduction}\label{sec:ROMapplication} 
Model order reduction (MOR) is a well-known approach to lower the computational complexity of expensive high-dimensional (possibly parametric) problems. The goal of MOR is to derive a low-dimensional reduced-order model, which can be evaluated significantly faster than the high-dimensional model. One popular method to derive a reduced-order model is using Galerkin projection combined with the proper orthogonal decomposition (POD), approximating the high-dimensional state by $\state \approx \rob \statered$ with $\state \in \R^{n}$, reduced state $\statered \in \R^{r}$, and $\rob \in \R^{n \times r}$ being the reduced-order basis, possibly generated by the POD.  
However, in case that a nonlinear term $f: \R^n \rightarrow \R^{n}$ is present in the problem formulation, even though $\statered$ lives in the low-dimensional space, the evaluation of the respective reduced term $f_r(\statered) = \rob^\top f(\rob \statered)$ still scales with the high dimension $n$, degrading the performance of the ROM. 
To overcome this issue, the discrete empirical interpolation method (DEIM) has been introduced in \cite{chaturantabut2010nonlinear} as a discrete version of the EIM \cite{barrault2004empirical}. 
Given a matrix $\bm{U}\in \R^{n \times m}$ of rank $m$ and a nonlinear function $f: D \rightarrow \R^{n}$, $D \subset \R^d$ the DEIM approximation of $f$ is for $\tau \in D$ defined by 
\begin{align*}
	\hat{f}(\tau) := \bm{U}(\mathbb{S}^\top\bm{U})^{-1} \mathbb{S}^\top f(\tau)
\end{align*}
where $\mathbb{S}\in \R^{n \times m}$ is the so-called selection operator, consisting of columns of the $n \times n$ identity matrix. Then $f_r(\statered) \approx \rob^\top \bm{U}(\mathbb{S}^\top \bm{U})^{-1} \mathbb{S}^\top f(\rob \statered)$ can be computed without lifting $\statered$ to the high-dimensional space. 

For an orthonormal $\bm{U} \in \R^{n \times m}$, $m \le n$, the following error bound can be derived, see \cite[Lemma 3.2]{chaturantabut2010nonlinear} 
\begin{align*}
	\| f - \hat{f} \|_2 \le \|(\mathbb{S}^\top\bm{U})^{-1} \|_2 \| f - \bm{U} \bm{U}^\top f\|_2. 
\end{align*}
This means that in this application, we have row subset selection problem, where the goal (when using QR) is to minimize 
\begin{align*}
	\|(\mathbb{S}^\top\bm{U})^{-1} \|_2 = \| R_{11}^{-1}\|_2
\end{align*}
i.e., the bound/approximation quality does depend on $\mu_{m,n,k}$ in~\cref{eq:GoodLeadingSV}.\footnote{In this section, we consider $k=m$.}
In \cite{drmac2016new} the QDEIM method has been introduced, which presented a new selection operator based on CPQR. It can be shown that this method has the error bound
\begin{align*}
 	\|(\mathbb{S}^\top \bm{U})^{-1} \|_2 \le \sqrt{n-m+1} \frac{\sqrt{4^m+6m-1}}{3},
\end{align*} 
which is know to overestimate the error in practice. Additionally, the authors in \cite{drmac2016new} showed, following an argument from \cite{goreinov1997theory}, that there exists a choice of selection operator such that $\|(\mathbb{S}^\top \bm{U})^{-1} \|_2 \le \sqrt{1 + m(n-m)}$, choosing the global $m \times m$ maximum volume subset, which is computationally infeasible. 
In recent years, the QDEIM method has been extended to a strong rank-revealing QDEIM in \cite{drmac2018discrete} using the rank-revealing algorithm of \cite{gu1996efficient}, leading to 
\begin{align} \label{eqn:srrqr_bound_qdeim}
 	\|(\mathbb{S}^\top\bm{U})^{-1} \|_2 \le \sqrt{1 + \eta^2 m(n-m)}, 
\end{align} 
where $\eta \ge 1$ is a tuning parameter introduced in \cite{gu1996efficient}. 

After considering the theoretical bounds, we investigate how well CPQR performs in practice and how far away it is from QR with local maximum volume, which yields significantly better bounds. To this end, we investigate an illustrative example similar to \cite{quarteroni2015reduced,drmac2018discrete,saibaba2020randomized}. The problem is about model reduction of a diffusion-convection-reaction equation with a nonlinear source term, which, if not approximated with DEIM/QDEIM degrades the performance of the reduced-order model. We set the spatial domain $\Omega= [0,1]^2$, The non-affine source term is given by 
\begin{align*}
s(\state; \bm{\mu}) := \exp\left( - \frac{(x_1 - \mu_1)^2 + (x_2 - \mu_2)^2}{\mu_3^2} \right). 
\end{align*}
with $\mu_1 \in [0.2,0.8]$, $\mu_2 \in [0.15,0.35]$ and $\mu_3 \in [0.1, 0.35]$. We run QDEIM using a uniform spatial discretization with $100$ points in each direction, a Latin hypercube sampling with $n_s =1000$ and a POD-tolerance of $10^{-6}$. The error is computed by averaging over $200$ test points. We evaluate the QDEIM approximation with CPQR for $k=2,\ldots,30$.

Using the QDEIM with CPQR in this example, the largest value of $\mu_{\textnormal{QDEIM}}= 1.1448$ was obtained for dimension $19$. We can conclude that  
\[
\|(\mathbb{S}^\top\bm{U})^{-1} \|_2 \le \sqrt{1 + 1.1448^2 m(n-m)}.
\]
Moreover, for all dimensions, it took at most $11$, on average $2.3448$ swaps to arrive at a local maximum volume subset, see \cref{fig:QDEIM} (left). In \cref{fig:QDEIM} (right), the error of the QDEIM is plotted with respect to the CPQR pivoting and the final local maximum volume subset configuration. Notably, the QDEIM with CPQR even ended up with a local maximum volume subset in $7$ cases. This shows that in practice QDEIM with CPQR does perform well. With either a check of what the value of $\mu_B$ is in \eqref{eq:metric}, or performing a couple more swaps to arrive at a local maximum volume subset, one can ensure stronger theoretical bounds.  

\begin{figure}[ht!] 
\centering
\begin{minipage}{.49\textwidth}
\centering
\begin{overpic}[width=.9\textwidth,trim=112 70 100 50,clip]{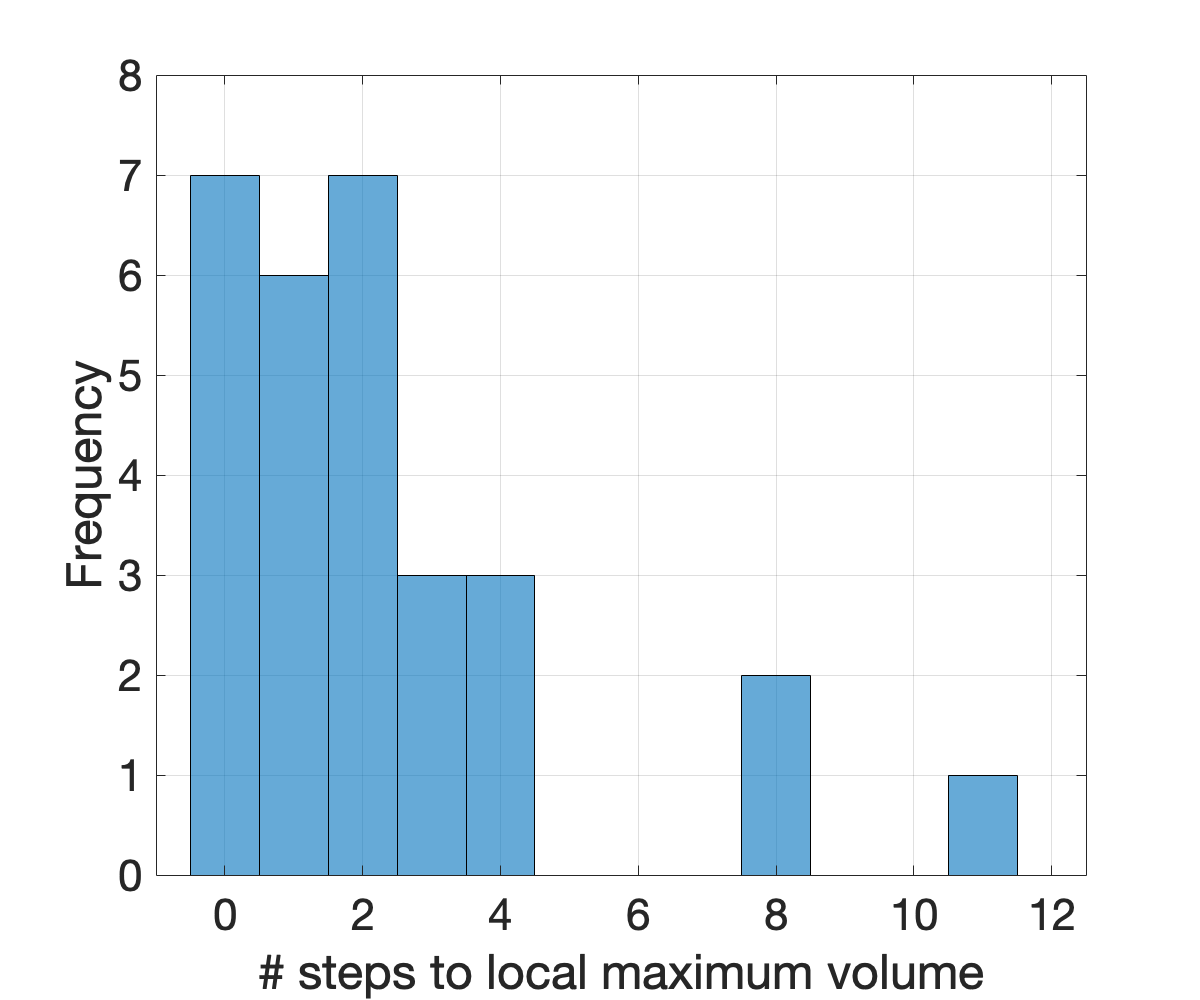} 
\put(-6,30) {\rotatebox{90}{Frequency}}
\put(4,-5) {\# swaps to local maxvol subset}
\end{overpic} 
\end{minipage}
\begin{minipage}{.49\textwidth}
\centering
\begin{overpic}[width=.9\textwidth,trim=112 70 90 50,clip]{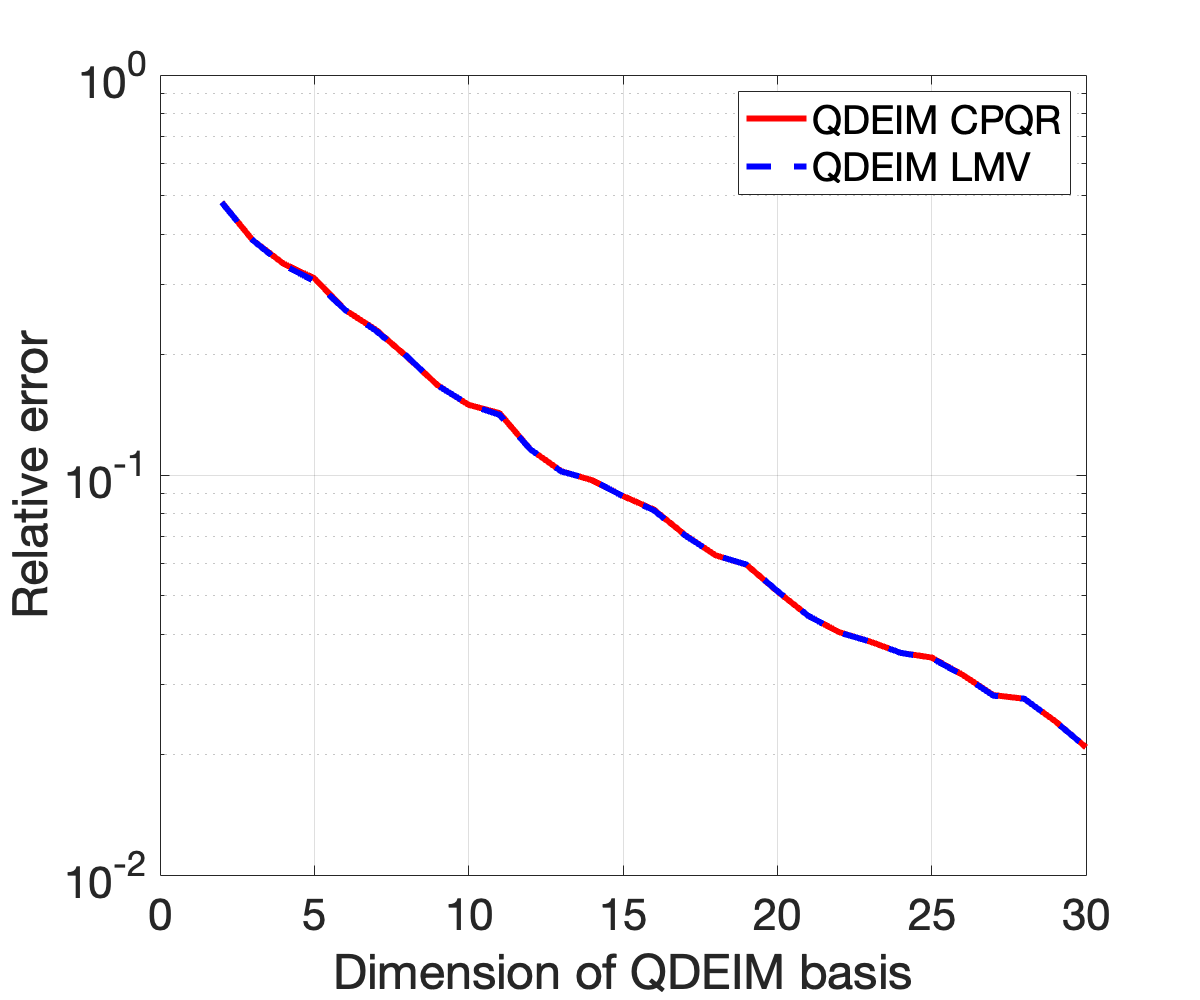} 
\put(-6,30) {\rotatebox{90}{Relative error}}
\put(15,-5) {Dimension of QDEIM basis}
\end{overpic} 
\end{minipage}
\caption{Left: Number of swaps needed to arrive at local maximum volume subset, starting at the subset selected by CPQR. Right: Relative error in $\| \cdot \|_2$ of QDEIM with CPQR (QDEIM CPQR) and the error of QDEIM with respective local maximum volume subset (QDEIM LMV), both with respect to $\| \cdot \|_2$.}
\label{fig:QDEIM} 
\end{figure}

\subsection{Computing spatially localized basis functions}\label{sec:QCapplication}
We now consider an application relevant algorithm whose efficacy is determined by~\cref{eq:GoodLeadingSV}---specifically how large $\sigma_k(R_{11})$ is---when using pivoted QR as a rank-revealer. Specifically, we consider a problem from electronic structure theory, where we seek to understand the properties of molecules and solids from first principles calculations. One common model within this setting is Kohn--Sham density function theory (KSDFT) \cite{HohenbergKohn1964,KohnSham1965}. Within this framework, understanding ground state properties of the underlying atomic system requires finding and working with eigenfunctions $\psi_i$ associated with the $N$ algebraically smallest eigenvalues of the Kohn--Sham Hamiltonian (a non-linear eigenvalue/vector problem); see, e.g.,~\cite{lin2019numerical} for details from a computational perspective.  

Interestingly, many properties of atomic systems (and computational schemes) depend only on the span of $\{\psi_i\}_{i=1}^N$ and not the individual $\psi_i.$ Accordingly, it is often desirable to work with an alternative orthonormal basis $\{\phi_i\}_{i=1}^N$ where each basis function $\phi_i$ is highly localized spatially (see~\cref{fig:alkane} for an example). This is referred to as computing localized Wannier functions (LWFs)~\cite{WannierReview,MarzariVanderbilt1997}.\footnote{Notably, the $\psi_i$ do not naturally have this property, and for simplicity, we only consider insulating systems (i.e., those where there is a gap between $\lambda_N$ and $\lambda_{N+1}$), where such localized orbitals provably exist~\cite{BenziBoitoRazouk2013,Nenciu1983,brouder2007exponential,Kohn1959}.} Recent work connects the computation of LWFs to rank-revealer's based on QR~\cite{damle2018disentanglement,damle2017scdm,damle2015compressed}. 

We will refer to a discretized set of wavefunctions $\psi_i(\br)$ using the matrix $\Psi = \begin{bmatrix}\psi_1(\br) & \cdots & \psi_{N}(\br)\end{bmatrix}.$ Mathematically, the goal is then to compute an orthogonal matrix $U$ such that the columns of $\Phi = \Psi U$ are well-localized (i.e., with respect to the domain of the underlying atomic system). The selected columns of the density matrix (SCDM) algorithm~\cite{damle2018disentanglement,damle2017scdm,damle2015compressed} accomplishes this task by leveraging the fact that if a local basis exists for the span of $\Psi$ then the spectral projector $\Psi\Psi^\top$ must have decay properties~\cite{BenziBoitoRazouk2013}. Therefore, the SCDM algorithm uses a partial QR factorization 
\[
\Psi^\top \Pi = Q \begin{bmatrix} R_{11} & R_{12} \end{bmatrix}
\] 
to identify $N$ exemplar columns of $\Psi\Psi^\top$ to use as candidates for LWFs. Letting $\cC$ denote the first $N$ indices selected by the permutation $\Pi$ we then solve for $U$ via the Orthogonal Procrustes problem $U = \argmin_Z \|\Psi\Psi(\cC,:)^\top - \Psi Z\|_F$. The locality of the basis computed by SCDM is dependent on $\sigma_k(R_{11}).$ Broadly speaking, closer it is to 1 the more localized the resulting basis is. Importantly, because we are using the rank-revealing algorithm to select columns and not build an approximation of $\Psi^\top$,~\cref{eq:GoodTrailingSV} is irrelevant and we only care about~\eqref{eq:GoodLeadingSV} and~\cref{def:InterpolativeBoundsQR}.

\begin{figure}[ht!] 
\centering
\begin{minipage}{.3\textwidth}
\includegraphics[width=\textwidth]{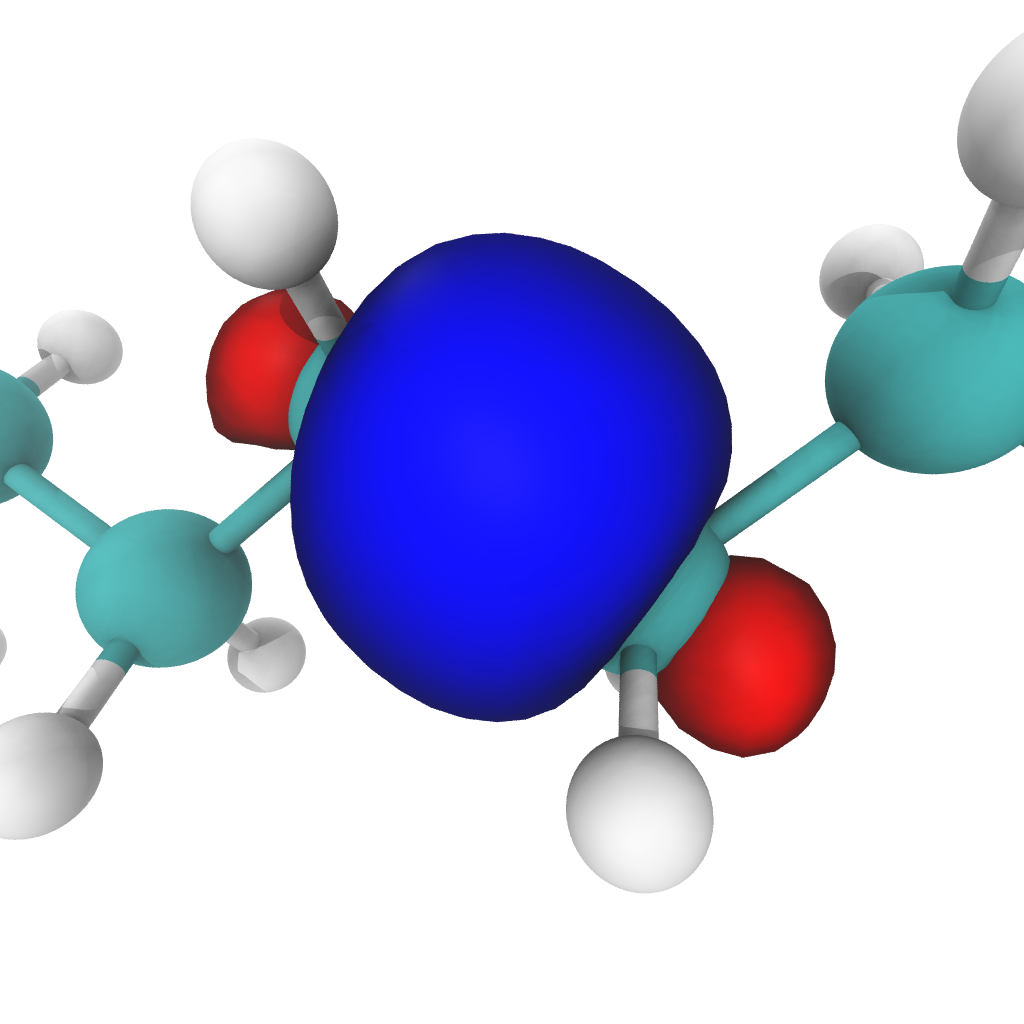} 
\end{minipage}
\hspace{3em}
\begin{minipage}{.3\textwidth}
\includegraphics[width=\textwidth]{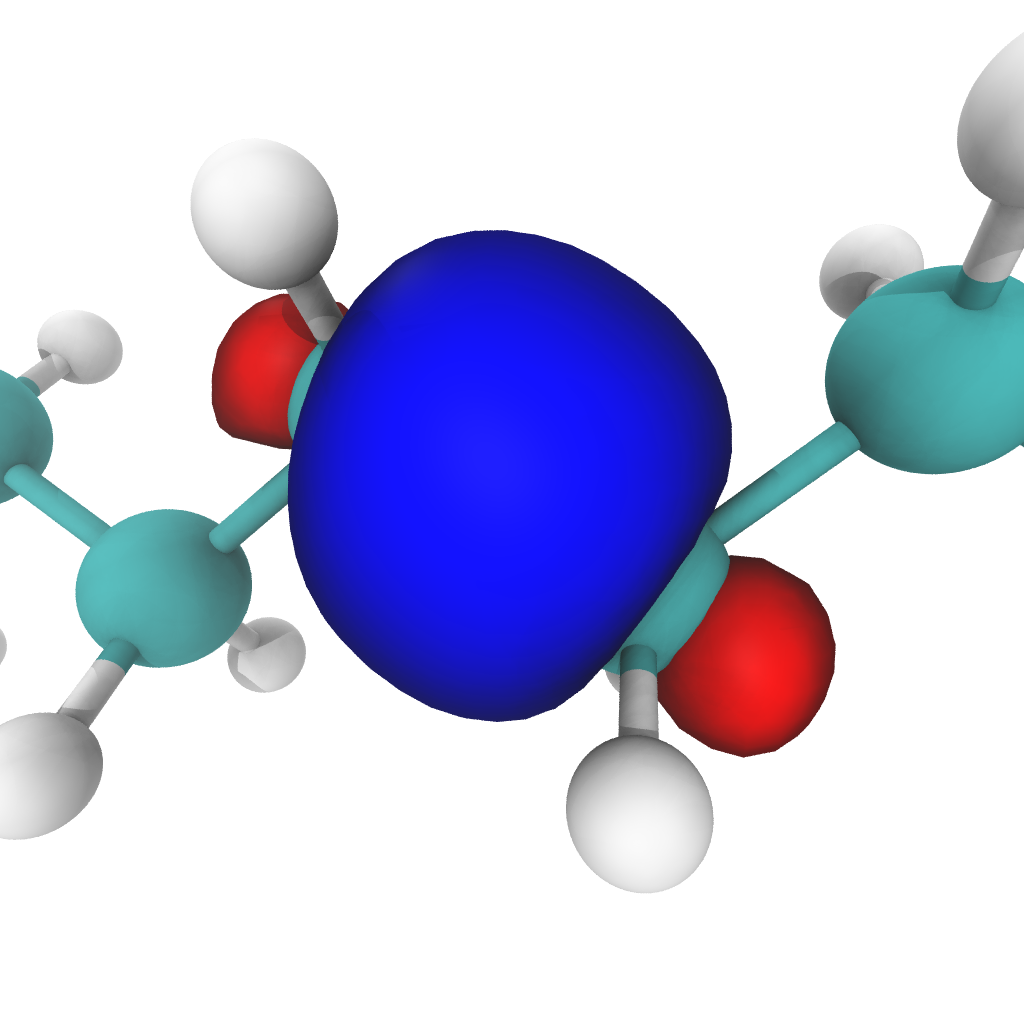} 
\end{minipage}
\caption{One LWF for an alkane system computed via the SCDM methodology using a CPQR (left) and local maximum volume subset (right). The spread of the function produced by the subset that is of locally maximum volume is $0.5\%$ smaller.}
\label{fig:alkane} 
\end{figure}

To illustrate both the efficacy of CPQR within the SCDM framework and the short paths to a subset of locally maximum volume we consider two example atomic systems. Our first example is an alkane system where $n=100$ and the wavefunctions are discretized on 820,125 points, i.e., $\Psi\in\R^{100\times 820,125}$. For this problem, using a CPQR factorization yielded $\mu_\Psi = 1.0012$---providing a subset strikingly close to one of locally maximum volume property. Moreover, it only took 10 swaps to find a subset with $\mu_\Psi = 1.$ LWFs computed using the subset returned by CPQR and the subset of locally maximum volume are nearly visually indistinguishable (see~\cref{fig:alkane}) and the spread is within $0.5\%.$ The subset of locally maximum volume did produce slightly more localized functions---albeit not in a way that would meaningfully impact downstream applications. For a water system with $N=256$ and discretized using 1,953,125 grid points. CPQR yields a subset with $\mu_\Psi = 1.0211$ and it took 9 swaps to find a subset with $\mu_\Psi = 1.$ The LWFs are visually indistinguishable and differ in spread by only $0.02\%$ (again with the subset of locally maximum volume yielding a slightly lower value). 

\begin{remark}
This example also shows the potential importance of a good initialization when seeking subset that has the locally maximum volume property. While it took less than 10 swaps to find a subset with $\mu_\Psi = 1$ when starting from a CPQR selected subset, starting from a random subset took $>100$ swaps (at which point the search was terminated) for the alkane example.
\end{remark}

\section{Discussion}
By taking the perspective that rank-revealers are algorithms and not factorizations, we provided a unified perspective on estimating a matrices singular values using structured low-rank approximations (see~\cref{eq:GoodLeadingSV,eq:GoodTrailingSV}). When specialized to GE and QR style factorizations, we show that finding a submatrix of (nearly) locally maximum volume is both necessary and sufficient for an algorithm to be a rank-revealer. This result is striking on two counts: (1) it shows that any local minimizer on the graph of appropriately sized subsets is good---even if it is far from the global optimizer in terms of volume---and (2) the archetypal algorithm for rank-revealing is to search for a subset that has (nearly) the local maximum volume property. 

Many applications also require that the computed factorization satisfies an interpolation bound (see~\cref{def:InterpolativeBoundsQR,def:InterpolativeBoundsGE}). Analogous to the results about singular values, locally maximum volume subsets satisfy this property and are necessary to do so. Complementing our theoretical results, numerous examples show that many of our bounds cannot be meaningfully improved, highlighting the importance of the local maximum volume concept. This concept also provides clarity on the (vast) prior literature about rank-revealers by showing what are essential parts of various definitions. Lastly, using both random matrices and application derived ones we show that classical algorithms such as GECP and CPQR provide effective initialization when searching for subset of (nearly) locally maximum volume; illustrating the practicality of such a scheme.

\appendix

\section{On the sharpness of~\cref{thm.sufficientLU}}\label{sec:AppendixSharpness}
In~\cref{ex:sharpness} we give an $m\times n$ matrix $A$ that depends on $k$ that demonstrates that the value of $\mu_{m,n,k}$ in~\cref{thm.sufficientLU} must be greater than or equal to $(k+1)\sqrt{(m-k)(n-k)}$. Here, we give the details for that example. Recall that in this example we have $k\geq 2$. 

First, we want to check that the principal $k\times k$ submatrix $A_{11}$ of $A$ is a local maximum volume submatrix. Due to the form of $A_{11}$, it can be shown that  
\[
A_{11}^{-1} = \frac{1}{2(k+2)}\begin{bmatrix}3 & 1 & \cdots &1 & 1 \\ 1 & 3 & \cdots & 1& 1 \\ \vdots & \vdots & \ddots & \vdots & \vdots\\ 1 & 1 & \cdots & 3 & 1\\ 1 & 1 & \cdots & 1 & 3 \end{bmatrix}. 
\]

We need to check that ${\rm vol}(A_{11}) \geq {\rm vol}(\hat{A}_{11})$, where $\hat{A}_{11}$ is a submatrix of $A$ that differs from $A_{11}$ in at most one column and row.  Looking at $A$ and the columns and rows that can be swapped in, we find that the $A_{11}$ and $\hat{A}_{11}$ can differ in at most $2$ diagonal entries. If they differ in two diagonal entries, then ${\rm vol}(\hat{A}_{11})=0$ so we can focus on the situation where they differ in a single entry. Since $A -\hat{A}_{11}$ is a rank one matrix, the matrix determinant lemma~\cite[Lem.~1.1]{ding2007eigenvalues} gives us
\[
\left|\frac{{\rm vol}(\hat{A}_{11})}{{\rm vol}(A_{11})}\right| = \left|1 - \frac{3}{2}\right|= \frac{1}{2}<1,
\]
where the $3/2$ factor comes from a diagonal entry of $A_{11}$. We conclude that $A_{11}$ is a local maximum volume submatrix of $A$. 

In~\cref{ex:sharpness}, we also needed to calculate $\sigma_1(A-A_k)$ and bound $\sigma_{k+1}(A)$. We can compute $\sigma_1(A-A_k)$ from the relationship $\sigma_1(A-A_k) = \sigma_1(S(A_{11}))$, where $S(A_{11})$ is the Schur complement given in~\cref{eq.rankkLU}. By a calculation, we find that 
\[
S(A_{11}) = (k+1)\mathfrak{1}_{m-k,n-k} - \mathfrak{1}_{m-k,k}A_{11}^{-1}\mathfrak{1}_{k,n-k} = \frac{k+2}{2}\mathfrak{1}_{m-k,n-k}, 
\]
where $\mathfrak{1}_{m,n}$ is an $m\times n$ matrix of all ones. Hence, we find that $\sigma_1(A-A_k) = (k+2)\sqrt{(m-k)(n-k)}/2$. To bound $\sigma_{k+1}(A)$ from above, we consider the following rank $\leq k$ matrix given by 
\[
   B_k \!=\!\! \left[\begin{array}{ccccc|ccc}
        k+1 & \!-1-\beta & \cdots & \!-1-\beta & \!-1-\beta & -1 & \cdots & -1 \\
       -1-\beta & k+1  & \cdots & \!-1-\beta & \!-1-\beta & -1 & \cdots & -1 \\
        \vdots & \vdots & \ddots & \vdots & \vdots & \vdots & \ddots & \vdots \\
        -1-\beta & \!-1-\beta & \cdots & k+1  & \!-1-\beta & -1 & \cdots & -1\\
        -1-\beta & \!-1-\beta & \cdots &\!-1-\beta & k+1  & -1 & \cdots & -1 \\
        \hline
        -1 & -1 & \cdots & -1 & -1 & k+1  & \cdots & k+1  \\
        \vdots & \vdots & \ddots & \vdots & \vdots & \vdots & \ddots & \vdots \\
       -1 & -1 & \cdots & -1 & -1 & k+1  & \cdots & k+1 
    \end{array}\right]\!, \quad \beta = \frac{k+2}{k^2-1}. 
\]
To see that $B_k$ is of rank $\leq k$, one can note that there are $k+1$ distinct columns and $k+1$ distinct rows. Moreover, the sum of the first $k$ columns (or rows) equals $k/(k+1)$ times the $(k+1)$st column (or row). The entries of $A$ and $B_k$ match everywhere except for $k(k-1)$ entries in the principal $k\times k$ submatrix. This means by the Eckart--Young Theorem~\cite{eckart1936approximation} that for $k\geq 2$ we have
\[
\sigma_{k+1}(A)\leq \|A-B_k\|_2 \leq \|A-B_k\|_F = \sqrt{k(k-1)\beta^2} = \sqrt{\frac{k(k-1)(k+2)^2}{(k+1)^2(k-1)^2}}\leq 2,
\]
where $\|\cdot\|_F$ denotes the matrix Frobenius norm.


\section*{Acknowledgments}
During the pandemic, A.T. co-organized the Complexity of Matrix Computations (CMC) online seminar series. The seminar on ``RRQR and column subset selection" on the 10th of November 2020 significantly motivated us. We are grateful to Ilse Ipsen for discussing local maximum volume pivoting with A.T. over Zoom in 2021. A.D., A.T. and A.Y. were supported by the SciAI Center, funded by the Office of Naval Research under Grant Number N00014-23-1-2729. A.D. was supported by the National Science Foundation under award DMS-2146079. A.T. was supported by NSF CAREER (DMS-2045646). 

\bibliographystyle{siam}
\bibliography{references}

\end{document}